\documentclass[10pt]{article}

\usepackage[margin=1in]{geometry}
\usepackage{graphicx}
\usepackage{amsmath}
\usepackage{amsfonts}
\usepackage{amsthm}
\usepackage{xcolor}
\usepackage[bookmarksopen=true]{hyperref}

\usepackage{marginnote}

\numberwithin{equation}{section}

\newtheorem{thm}{Theorem}[section]
\newtheorem{claim}[thm]{Claim}
\newtheorem{lem}[thm]{Lemma}
\newtheorem{prop}[thm]{Proposition}
\newtheorem{cor}[thm]{Corollary}

\newtheorem{mydef}{Definition}[section]

\newcommand{\R}{\mathbb{R}}

\newcommand{\B}{\mathbb{B}}
\newcommand{\N}{\mathbb{N}}

\newcommand{\A}{\mathcal{A}}

\begin{document}

\title{On the well-posedness and asymptotic behavior of the generalized KdV-Burgers equation}

\nocite{*}

\author{F. A. Gallego and A. F. Pazoto\footnote{Institute of Mathematics, Federal University of Rio de Janeiro, UFRJ, P.O. Box 68530, CEP 21945-970, Rio de Janeiro, RJ, Brasil. E-mail: fgallego@ufrj.br, ademir@im.ufrj.br. }
}

\date{}
\maketitle

\begin{abstract}
In this paper we are concerned with the well-posedness and the exponential stabilization of the generalized Korteweg-de Vries Burgers equation, posed on the whole real line, under the effect of a damping term.
Both problems are investigated when the exponent $p$ in the nonlinear term ranges over the interval $[1, 5)$. We first prove the global well-posedness in $H^s(\R)$, for $0\leq s \leq 3$ and  $1 \leq p <2$, and in $H^3(\R)$, when $p\geq 2$. For $2 \leq p <5$,  we prove the existence of global solutions in the $L^2$-setting. Then, by using multiplier techniques combined with interpolation theory, the exponential stabilization is obtained for a indefinite damping term and $1 \leq p <2$ . Under the effect of a localized damping term the result is obtained when $2 \leq p <5$. Combining multiplier techniques and compactness arguments it is shown that the problem of exponential decay is reduced to prove the unique continuation property of weak solutions\let\thefootnote\relax\footnote{FAG was supported by CAPES (Brazil) and AFP was partially supported by CNPq (Brazil).}
\\
\vspace{1mm}

\textbf{Keywords.} \textit{KdV-Burgers equation, stabilization, Carleman estimate, unique continuation property.} \\
\indent \textbf{\textit{2010 Mathematics Subject Classification}} 35Q53, 93D15, 93B. \\

\end{abstract}

\section{Introduction}
It is common knowledge that many physical problems, such as nonlinear shallow-water waves and wave motion in plasmas can be described by the family of the Korteweg-de Vries (KdV) equation. The KdV-type equations have also been used to describe a wide range of important physical phenomena
related to acoustic waves in a harmonic crystal, quantum field theory, plasma physics and solid-state physics. In what concerns the study of wave propagation in a tube filled with viscous fluid or flow of the fluid containing gas bubbles, for example, the control equation can be reduced to the so-called KdV-Burgers equation \cite{su1969korteweg}. It is commonly obtained from the KdV equation by adding a viscous term and combines nonlinearity, linear dissipation and dispersion terms:
\[
u_t + \delta u_{xxx}-  \nu u_{xx} + uu_x = 0,\quad  t > 0,\, x \in \R.
\]
Since $\delta$ and $\nu$ are positive numbers, the model can be viewed as a generalization of the KdV and Burgers equation.
Particularly, the Burgers equation is a simple model equation for a variety of diffusion/dissipative processes in convection dominated systems, which include formation of weak shocks, traffic flow, turbulence, etc. If besides convective nonlinearity and dissipation/diffusion mechanism, the dispersion also plays its role over the spatial and temporal scales of interest, then the simplest nonlinear PDE governing the wave dynamics is the combination of both KdV and Burgers equation which is known as KdV-Burgers equation.

In this work we are concerned with the generalized KdV-Burgers equation (GKdV-B) under the effect of a damping term represented by a function
$b = b(x)$, more precisely,
\begin{equation}\label{e1}
\left\lbrace \begin{tabular}{l l}
$u_t +u_{xxx}-u_{xx}+a(u)u_x+b(x)u=0$ & in $\R\times\R_+$ \\
$u(x,0)=u_0(x)$ & in $\R.$
\end{tabular}\right.
\end{equation}
Our main purpose is to address two mathematical issues connected to the initial value problem \eqref{e1}: global well-posedness and large-time behavior of solutions. More precise, we establish the well-posedness and the exponential decay of solutions in the classical Sobolev spaces $H^s$. Therefore, as usual, let us first consider the energy associated to the model, given by
$$E(t)=\frac{1}{2}\int_{\mathbb{\R}}u^2(x,t) dx.$$
Thus, at least formally, the solutions of \eqref{e1} should satisfy
\begin{equation}\label{diss-energy}
\displaystyle \frac{d}{dt}E(t) = - \int_{\mathbb{\R}}u^2_x dx - \int_{\mathbb{\R}} b(x) u^2  dx,
\end{equation}
for any positive $t$. Then, if we assume that $b(x)\geq b_0$, for some $b_0 >0$, it is forward to infer that $E(t)$ converges to zero exponentially. By contrast, when the damping function $b$ is allowed to change sign or is effective only a subset of the domain, the problem is much more subtle. Moreover,  whether \eqref{diss-energy}  generates a flow that can be continued indefinitely in the temporal variable, defining a solution valid for all $t\geq 0$, is a nontrivial question.

In order to provide the tools to handle with both problems, we assume that $a = a(x)$ is a positive real-valued function that satisfies the growth conditions
\begin{equation}\label{e2}
\left\lbrace \begin{tabular}{l}
\vspace{1mm} $|a^{(i)}(\mu)| \leq C(1+|\mu|^{p-j}), \quad \forall \mu \in \R$, \mbox{ for some } $C > 0 $,\\
$j=0,1$ \mbox{ if } $1\leq p < 2$ and $j=0,1,2$ \mbox{ if } $p\geq 2$.
\end{tabular} \right.
\end{equation}
Moreover, in order to obtain the exponential stability in the case $1\leq p <2$, we take an indefinite damping satisfying
\begin{equation}\label{hyp b}
\left\lbrace \begin{tabular}{l}
$b \in H^1(\R)$  and  $b(x) \geq \lambda_0 +\lambda_1(x)$,  almost everywhere, for some $\lambda_0 > 0$ and  $\lambda_1 \in L^p(\R)$,\\
such that $\|\lambda_1\|_{L^p(\R)} < \left(\frac{\lambda_0}{c_p}\right)^{1-\frac{1}{2p}},$ where $c_p=\left(1-\frac{1}{2p}\right)\left(\frac{2}{p}\right)^{\frac{1}{2p-1}}.$
\end{tabular}
\right.
\end{equation}
Concerning the case $p\geq 2$, we consider a localized damping which acts only a bounded subset of the line, more precisely,
\begin{equation}\label{hyp a}
\begin{tabular}{l}
$b \in H^1(\R)$ is nonnegative and  $b(x)\geq \lambda_0 >0$ almost everywhere in $(-\infty,\alpha)\cup(\beta,\infty)$, for some $\alpha, \beta\in \R$.
\end{tabular}
\end{equation}

Our analysis was inspired by the results obtained by Cavalcanti et al. for KdV-Burgers equation \cite{cavalcanti2014global} and by Rosier and Zhang for the generalized KdV equation posed on a bounded domain \cite{rosier2006global} (see also \cite{linares2007exponential}). In this context, we refer to the survey \cite{rosier2009control} for a quite complete review on the state of art.

When $1 \leq p <2$ and $0\leq s\leq 3$, we obtain the global well-posedness in the class $B_{s,T}=C([0,T];H^s(\R))\cap L^2(0,T;H^{s+1}(\R))$  and prove that the solutions decays exponentially to zero in $H^s(\R)$, where $H^s$ denotes the classical Sobolev spaces. As it is known in the theory of dispersive wave equations, the results depend on the local theory, on the a priori estimates satisfied by the solutions and also on linear theory. Indeed, we combine the Duhamel formula and a contraction-mapping principle to prove directly the local well-posedness. In order to get the global result we derive energy-type inequalities and make use of interpolation arguments. Those a priori estimates are sufficient to yield the global stabilization result and a strong smoothing property for solutions; $u\in C([\varepsilon,T];H^s(\R))\cap L^2(\varepsilon,T;H^{s+1}(\R))$, for any $\varepsilon >0$. Our analysis extends the results obtained in \cite{cavalcanti2014global} from which we borrow some ideas involved in our proofs. When $p\geq 2$ we can use the same approach to prove that the global well-posedness also holds in $B_{3,T}$. In order to get the result in a stronger/weaker norm,  we need a priori global estimates. However, the only available priori estimate for \eqref{e1} is the estimate provided by \eqref{diss-energy}, which does not guarantee existence of global in time solutions. In fact, we do not know if the problem is locally well-posed in the energy space. Therefore, we restrict ourselves to the case $2\leq p <5$ to prove that the estimate provided by the energy dissipation law holds and establish the existence of global solutions in the space $C_{\omega}([0,T];L^2(\R))\cap L^2(0,T;H^1(\R))$. The uniqueness remains an open problem. In what concerns the asymptotic behavior,  we prove the exponential decay in the $L^2$-setting by following the approach used in \cite{rosier2006global}. It combines multiplier techniques and compactness arguments to reduce the problem to some unique continuation property for weak solutions. To overcome this problem we develop a Carleman inequality by modifying (slightly) a Carleman estimate obtained by Rosier in \cite{rosier2000exact} to study the controllability properties of the KdV equation. It allows us to prove unique continuation property directly.

The program of this work was carried out for the particular choice of damping effect appearing in \eqref{e1} and aims to establish as a fact that such a model predicts the interesting qualitative properties initially observed for the KdB-Burgers type equations. Consideration of this issue for nonlinear dispersive equations has received considerable attention, specially the problems on the time decay rate. At this respect, it is important to point out that the approach used here was successfully applied in the context of the KdV equation, posed on $\R^+$ and $\R$, under the effect of a localized damping term \cite{cavalcanti2012decay,linares2009asymptotic}.
We also remark that, in the absence of the damping term $b$, the stabilization problem was addressed by Bona and Luo \cite{bona1995more,bona1993decay}, complementing the earlier studies developed in \cite{amick1989decay,biler1984asymptotic,dix1992dissipation} and  deriving sharp polynomial decay rates for the solutions. Later on, in \cite{bona2001asymptotic,kang2014simple}, the authors improved upon the foregoing theory. The asymptotic behavior was also discussed in the language of the global attractors \cite{dlotko2011generalized,dlotko2010asymptotic}. More precisely, the authors study the large time behaviour of the corresponding semigroup on constructing a global attractor.

The analysis described above was organized in two sections. In Section 2 we establish the global well-posedness results. Section 3 is devoted to the stabilization problem.
Finally, in the Appendix, we prove a Carleman inequality. In all sections we split the results into several steps in order to make the reading easier.

\section{Well-posedness.}
First we consider the corresponding linear inhomogeneous initial value problem,
\begin{equation}\label{ee1}
\left\lbrace \begin{tabular}{l l}
$u_t-u_{xx} +u_{xxx}+b(x)u=f$ & $(x,t)\in \R\times\R_+$ \\
$u(x,0)=u_0(x)$ & $x\in \R$.
\end{tabular}\right.
\end{equation}
Setting
\begin{equation*}
\text{$A_b :=\partial_x^2 -\partial_x^3-bI$ and  $D(A_b)=H^3(\R), b \in L^{\infty}(\R)$}
\end{equation*}
(\ref{ee1}) can be written in the form
\begin{equation*}
\left\lbrace  \begin{tabular}{l}
$u_t = A_bu + f$ \\
$u(0) = u_0$.
\end{tabular}\right.
\end{equation*}
According to \cite{cavalcanti2014global}, $A_b$ generates a strongly continuous semigroup $\{S(t)\}_{t \geq 0}$ of contractions in $L^2(\R)$. Hence,
if we consider the Banach space
\begin{equation}\label{e3}
\left\lbrace\begin{tabular}{l}
$B_{s,T}:=C([0,T];H^s(\R))\cap L^2(0,T;H^{s+1}(\R))$ \\
\\
$\|u\|_{s,T}=\sup_{t\in[0,T]}{\|u(t)\|_{H^s(\R)}} +\|\partial^{s+1}_{x}u\|_{L^2(0,T;L^2(\R))}$,
\end{tabular}\right.
\end{equation}
the following result holds:

\begin{prop}\label{prop5}
Let $T>0$. If $u_0 \in L^2(\R)$ and $f \in L^1(0,T;L^2(\R))$, (\ref{ee1}) has a unique mild solution $u \in B_{0,T}$, and
\begin{equation*}
\|u\|_{0,T}\leq C_T\left\lbrace \|u_0\|_2+\|f\|_{L^1(0,T;L^2(\R))}\right\rbrace, \quad \text{with} \quad C_T=2e^{T\|b\|_{\infty}}.
\end{equation*}
Furthermore, the following energy identity holds for all $t \in [0,T]$:
\begin{equation}\label{e116}
\|u(t)\|_2^2+2\int_0^t\|u_x(s)\|_2^2ds+2\int_0^t\int_{\R}b(x)\|u(x,s)\|_2^2dxds=\|u_0\|_2^2+2\int_0^t\int_{\R}f(x,s)u(x,s)dxds.
\end{equation}
\end{prop}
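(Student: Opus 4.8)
The plan is to represent solutions of \eqref{ee1} through Duhamel's formula
\[
u(t)=S(t)u_0+\int_0^t S(t-s)f(s)\,ds,
\]
where $\{S(t)\}_{t\geq0}$ is the semigroup generated by $A_b$. Viewing $-bI$ as a bounded perturbation of the contraction semigroup generated by $A_0=\partial_x^2-\partial_x^3$ (one has $\langle A_0u,u\rangle=-\|u_x\|_2^2\leq0$), bounded-perturbation theory yields the growth bound $\|S(t)\|_{\LL(L^2)}\leq e^{t\|b\|_\infty}$, and the standard theory of mild solutions for $C_0$-semigroups gives existence and uniqueness of $u\in C([0,T];L^2(\R))$ together with $\sup_{t\in[0,T]}\|u(t)\|_2\leq e^{T\|b\|_\infty}\bigl(\|u_0\|_2+\|f\|_{L^1(0,T;L^2(\R))}\bigr)$; uniqueness also follows from the estimate applied to the difference of two solutions. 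What remains, and constitutes the real content, is the gain of one spatial derivative ($u\in L^2(0,T;H^1(\R))$, i.e.\ membership in $B_{0,T}$) together with the energy identity \eqref{e116}, both of which I would extract from an a priori energy estimate and then justify by density.

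First I would establish \eqref{e116} for regular data. Taking $u_0\in D(A_b)=H^3(\R)$ and $f$ smooth, the mild solution is a strong solution, regular enough to multiply \eqref{ee1} by $u$ and integrate over $\R$. The dispersive term drops out, $\int_\R u_{xxx}u\,dx=0$, after integrating by parts twice (boundary terms vanishing by decay), while $-\int_\R u_{xx}u\,dx=\|u_x\|_2^2$; this gives
\[
\tfrac12\tfrac{d}{dt}\|u(t)\|_2^2+\|u_x(t)\|_2^2+\int_\R b(x)u^2\,dx=\int_\R f u\,dx.
\]
Integrating in time and multiplying by $2$ produces exactly \eqref{e116}, and in particular shows $u\in L^2(0,T;H^1(\R))$.

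Next I would turn the identity into the quantitative bound. Estimating $\bigl|\int_\R bu^2\bigr|\leq\|b\|_\infty\|u\|_2^2$ and $\bigl|\int_\R fu\bigr|\leq\|f\|_2\|u\|_2$ and combining with the Duhamel bound on $\sup_t\|u(t)\|_2$, the surviving terms in \eqref{e116} control $2\int_0^T\|u_x\|_2^2\,ds$; carrying the exponential weight through a Gronwall step yields $\|u_x\|_{L^2(0,T;L^2(\R))}\leq e^{T\|b\|_\infty}R$ with $R:=\|u_0\|_2+\|f\|_{L^1(0,T;L^2(\R))}$, and adding the two contributions to the norm \eqref{e3} gives $\|u\|_{0,T}\leq 2e^{T\|b\|_\infty}R$, i.e.\ the asserted estimate with $C_T=2e^{T\|b\|_\infty}$.

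Finally I would remove the regularity assumption by density. Choosing $(u_0^n,f^n)\to(u_0,f)$ in $L^2(\R)\times L^1(0,T;L^2(\R))$ with $u_0^n\in H^3(\R)$ and $f^n$ smooth, linearity lets me apply the estimate to the differences $u^n-u^m$, which solve \eqref{ee1} with data $(u_0^n-u_0^m,f^n-f^m)$; this shows $\{u^n\}$ is Cauchy in $B_{0,T}$, hence $u^n\to u$ strongly there, the estimate passes to the limit, and $u$ coincides with the mild solution. The step I expect to be the main obstacle is precisely this smoothing gain together with the passage to the limit in \eqref{e116}: the bare semigroup estimate controls only the $C([0,T];L^2)$ part of the $B_{0,T}$ norm, so the $L^2(0,T;H^1)$ bound must be read off the energy identity on smooth data; and to keep \eqref{e116} an equality (not merely an inequality from weak lower semicontinuity) in the limit, one needs the strong convergence $u_x^n\to u_x$ in $L^2(0,T;L^2(\R))$, which the Cauchy property in $B_{0,T}$ — available here thanks to linearity — precisely supplies.
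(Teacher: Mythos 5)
Your proposal is correct, but note that the paper itself gives no argument for this proposition: its ``proof'' is the single line ``See [Proposition 4.1, cavalcanti2014global]'', so there is nothing internal to compare against line by line. What you have written is a self-contained version of the standard argument that the cited reference carries out: quasi-contractivity of the semigroup generated by $A_b=\partial_x^2-\partial_x^3-bI$ (dissipativity of $\partial_x^2-\partial_x^3$ plus bounded perturbation, giving $\|S(t)\|_{\mathcal{L}(L^2)}\leq e^{t\|b\|_\infty}$), the energy identity \eqref{e116} on regular data where mild solutions are strong, the smoothing bound $u\in L^2(0,T;H^1(\R))$ read off from that identity, and a density/linearity argument in which the Cauchy property in $B_{0,T}$ lets every term of \eqref{e116} pass to the limit as an equality. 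Two remarks on the only delicate points. First, the precise constant $C_T=2e^{T\|b\|_\infty}$ does not follow from the crude bound obtained by estimating $\bigl|\int_\R bu^2\,dx\bigr|\leq\|b\|_\infty\|u\|_2^2$ and inserting the sup bound directly (that route produces extra terms like $T\|b\|_\infty e^{2T\|b\|_\infty}R^2$ which are not dominated by $e^{2T\|b\|_\infty}R^2$); one must, as you indicate, weight the identity by $e^{-2t\|b\|_\infty}$ before integrating, which absorbs the $b$-term and yields $\int_0^T\|u_x\|_2^2\,ds\leq\tfrac12 e^{2T\|b\|_\infty}\bigl(\|u_0\|_2^2+2R\|f\|_{L^1(0,T;L^2)}\bigr)\leq e^{2T\|b\|_\infty}R^2$, hence $\|u_x\|_{L^2(0,T;L^2)}\leq e^{T\|b\|_\infty}R$ and the stated $C_T$ after adding the sup part. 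Second, your identification of strong convergence of $u_x^n$ in $L^2(0,T;L^2(\R))$ as the mechanism that keeps \eqref{e116} an equality in the limit is exactly the right point; weak convergence alone would only give an inequality. With those two steps made explicit, the argument is complete.
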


\begin{proof}
See \cite[Proposition 4.1]{cavalcanti2014global}.
\end{proof}

\subsection{\texorpdfstring{Case $1 \leq p <2$.}{Case 1}}
In order to establish the well-posedness of (\ref{e1}) we need the following technical Lemmas, that will play an important role in the proofs:

\begin{lem}[Generalized H\"{o}lder inequality]\label{hg}
Suppose that for $i=1,2,...,n$, $f_i  \in L^{p_i}$ and $\displaystyle\sum_{i=1}^n \frac{1}{p _i}=1$. Then,
\begin{align}\label{hi}
\|f_1.f_2...f_n\|_{L^1} \leq \sum_{i=1}^n \|f_i\|_{L^{p_i}}.
\end{align}

\end{lem}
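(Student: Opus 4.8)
The plan is to proceed by induction on the number $n$ of factors, reducing each step to the classical two-function H\"older inequality. (I remark in passing that the right-hand side is more naturally a product $\prod_{i=1}^n\|f_i\|_{L^{p_i}}$ than a sum; this is the form the argument below produces and the one consistent with the uses of the lemma in the sequel.) For the base case $n=2$, the hypothesis $\frac1{p_1}+\frac1{p_2}=1$ makes $p_1,p_2$ conjugate, and $\|f_1f_2\|_{L^1}\le\|f_1\|_{L^{p_1}}\|f_2\|_{L^{p_2}}$ is exactly the standard H\"older inequality, which I take as known (it follows from Young's inequality $ab\le \frac{a^{p_1}}{p_1}+\frac{b^{p_2}}{p_2}$ after normalizing both norms to one).

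For the inductive step, I assume the statement for any collection of $n-1$ factors and take $f_1,\dots,f_n$ with $\sum_{i=1}^n\frac1{p_i}=1$. First I define $q$ by
\[
\frac1q=\sum_{i=1}^{n-1}\frac1{p_i}=1-\frac1{p_n},
\]
so that $q$ and $p_n$ are conjugate exponents. Applying the two-function H\"older inequality to the pair $\big(f_1\cdots f_{n-1},\,f_n\big)$ gives
\[
\|f_1\cdots f_n\|_{L^1}\le \|f_1\cdots f_{n-1}\|_{L^q}\,\|f_n\|_{L^{p_n}}.
\]
It then remains to bound the first factor. The key device is a rescaling: I write $\|f_1\cdots f_{n-1}\|_{L^q}=\big\||f_1|^q\cdots|f_{n-1}|^q\big\|_{L^1}^{1/q}$ and set $r_i:=p_i/q$, so that $|f_i|^q\in L^{r_i}$ and
\[
\sum_{i=1}^{n-1}\frac1{r_i}=q\sum_{i=1}^{n-1}\frac1{p_i}=q\cdot\frac1q=1.
\]
Hence the induction hypothesis applies to the $n-1$ functions $|f_i|^q$, yielding $\big\||f_1|^q\cdots|f_{n-1}|^q\big\|_{L^1}\le\prod_{i=1}^{n-1}\||f_i|^q\|_{L^{r_i}}=\prod_{i=1}^{n-1}\|f_i\|_{L^{p_i}}^q$. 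Taking $q$-th roots and inserting the result into the previous display closes the induction.

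The main obstacle is not conceptual but lies in the exponent bookkeeping of the inductive step: one must verify that the rescaled exponents $r_i=p_i/q$ are genuinely admissible (each $r_i\ge 1$, with reciprocals summing to one) so that the induction hypothesis truly applies. A secondary point is the treatment of the degenerate cases $p_i=\infty$, or $p_n=1$ (whence $q=\infty$), which are handled by the usual $L^\infty$ conventions and do not affect the structure of the argument.
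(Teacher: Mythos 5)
Your proof is correct, but there is nothing in the paper to compare it against: the paper states this lemma as a classical fact (the generalized H\"older inequality) and gives no proof, using it as a black box in the subsequent estimates. Two remarks on your write-up. First, you are right that the right-hand side of \eqref{hi} as printed, with a sum in place of a product, is a typo: the sum version is false (replace $f_1$ by $\lambda f_1$ and let $\lambda\to\infty$; the left side grows like $\lambda\|f_1f_2\cdots f_n\|_{L^1}$ while the right side grows like $\lambda\|f_1\|_{L^{p_1}}$, and these are incompatible whenever $\|f_1\cdots f_n\|_{L^1}>\|f_1\|_{L^{p_1}}$), and every application in the paper (e.g.\ in Lemmas \ref{lm1} and \ref{lm2}) uses the product bound $\prod_{i=1}^n\|f_i\|_{L^{p_i}}$, which is exactly what your argument proves. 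Second, your induction with the rescaled exponents $r_i=p_i/q$ is the standard textbook argument and is sound; the admissibility point you flag but do not carry out is immediate: for each $i\le n-1$ one has $\frac{1}{q}=\sum_{j=1}^{n-1}\frac{1}{p_j}\ge\frac{1}{p_i}$, hence $p_i\ge q$ and $r_i\ge 1$, with the usual conventions covering the cases where some $p_i=\infty$.
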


\begin{lem}\label{lm1}
Let $a\in C^0(\R)$ be a function satisfying
\begin{gather}\label{e4}
|a(\mu)| \leq C(1+|\mu|^{p}), \quad \forall \mu \in \R,
\end{gather}
with $0\leq p < 2$. Then, there exists a positive constant $C$, such that, for any $T>0$ and $u,v \in B_{0,T}$, we have
\begin{equation*}
\|a(u)v_x\|_{L^1(0,T;L^2(\R))}\leq 2^{\frac{p}{2}}CT^{\frac{2-p}{4}}\|u\|_{0,T}^p\|v\|_{0,T}+CT^{\frac{1}{2}}\|v\|_{0,T}.
\end{equation*}
\end{lem}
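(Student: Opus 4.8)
The plan is to reduce everything to a pointwise-in-$t$ bound for $\|a(u(t))v_x(t)\|_{L^2(\R)}$ and then integrate in time using the generalized H\"older inequality of Lemma \ref{hg}. First, I would invoke the growth hypothesis \eqref{e4} to split
\[
\|a(u)v_x\|_{L^2(\R)} \le C\|v_x\|_{L^2(\R)} + C\,\big\| |u|^p v_x\big\|_{L^2(\R)}.
\]
Integrating the first term over $[0,T]$ and applying Cauchy--Schwarz in $t$ gives $C\int_0^T \|v_x\|_{L^2}\,dt \le CT^{1/2}\|v_x\|_{L^2(0,T;L^2)} \le CT^{1/2}\|v\|_{0,T}$, which is exactly the second summand on the right-hand side; the whole content therefore lies in the term $\big\||u|^p v_x\big\|_{L^2}$.

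For that term I would factor out the sup-norm of $u$, writing $\big\||u|^p v_x\big\|_{L^2(\R)} \le \|u\|_{L^\infty(\R)}^{\,p}\|v_x\|_{L^2(\R)}$, and estimate $\|u\|_{L^\infty}$ through the one-dimensional Agmon/Gagliardo--Nirenberg inequality $\|u\|_{L^\infty(\R)}^2 \le 2\|u\|_{L^2(\R)}\|u_x\|_{L^2(\R)}$, which is legitimate for a.e. $t$ because $u\in L^2(0,T;H^1(\R))$. This produces the constant $2^{p/2}$ together with
\[
\big\||u|^p v_x\big\|_{L^2(\R)} \le 2^{p/2}\,\|u\|_{L^2}^{p/2}\,\|u_x\|_{L^2}^{p/2}\,\|v_x\|_{L^2}.
\]
Bounding $\|u(t)\|_{L^2}\le\|u\|_{0,T}$ uniformly in $t$ then leaves the quantity $\int_0^T \|u_x(t)\|_{L^2}^{p/2}\|v_x(t)\|_{L^2}\,dt$ to be controlled.

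The decisive step is this last time integration. I would apply Lemma \ref{hg} to the product $\|u_x(t)\|_{L^2}^{p/2}\cdot\|v_x(t)\|_{L^2}\cdot 1$ with the three conjugate exponents $4/p$, $2$ and $4/(2-p)$, whose reciprocals sum to one. This turns the integral into $T^{(2-p)/4}\,\|u_x\|_{L^2(0,T;L^2)}^{p/2}\,\|v_x\|_{L^2(0,T;L^2)}$, and bounding the two space-time norms by $\|u\|_{0,T}$ and $\|v\|_{0,T}$ yields precisely $2^{p/2}CT^{(2-p)/4}\|u\|_{0,T}^p\|v\|_{0,T}$. This is the only place where the hypothesis $p<2$ is genuinely needed: it guarantees $4/(2-p)\in[2,\infty)$, so the exponent is admissible and the power $T^{(2-p)/4}$ is nonnegative. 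I expect the only mild care required is the a.e.-in-$t$ justification of the Agmon inequality and the verification that the chosen exponents are the unique ones collapsing $\|u_x\|$ onto its $L^2(0,T;L^2)$ norm while keeping $\|v_x\|$ to the first power; the remainder is bookkeeping.
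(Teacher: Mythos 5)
Your proposal is correct and follows essentially the same route as the paper: the same splitting via the growth bound \eqref{e4}, the same interpolation inequality $\|u\|_{\infty}^2\leq 2\|u\|_2\|u_x\|_2$ to produce the factor $2^{p/2}$, and the same application of Lemma \ref{hg} in time with exponents $4/p$, $2$, $4/(2-p)$ (the paper lists their reciprocals $\tfrac{p}{4},\tfrac{2-p}{4},\tfrac12$). The bookkeeping and the resulting bound coincide with the paper's proof exactly.
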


\begin{proof}
Recall that $H^1(\R) \hookrightarrow L^{\infty}(\R)$ and
\begin{equation}\label{e6}
\|u\|_{\infty}^2\leq 2\|u\|_2\|u_x\|_2,
\end{equation}
for all $u \in H^1(\R)$. 
On the other hand, by (\ref{e4}),
\begin{align*}
\|a(u)v_x\|_{L^1(0,T;L^2(\R))}&\leq C \int _{0}^{T}\|\left(1+|u(t)|^p\right)v_x(t)\|_2dt \\
&\leq C \int _{0}^{T}\|v_x(t)\|_2dt+ C\int _{0}^{T}\|u(t)\|_{\infty}^p\|v_x(t)\|_2dt.
\end{align*}
Using H\"{o}lder inequality $(\ref{hi})$ and $(\ref{e6})$, we have
\begin{align*}
\|a(u)v_x\|_{L^1(0,T;L^2(\R))}&\leq C T^{\frac{1}{2}}\|v_x\|_{L^2(0,T;L^2)}+ 2^{\frac{p}{2}}C\int _{0}^{T}\|u(t)\|_{2}^{\frac{p}{2}}\|u_x(t)\|_{2}^{\frac{p}{2}}\|v_x(t)\|_2dt \\
&\leq  C T^{\frac{1}{2}}\|v_x\|_{L^2(0,T;L^2)}+ 2^{\frac{p}{2}}C\|u\|_{C([0,T];L^2)}^{\frac{p}{2}}\int _{0}^{T}\|u_x(t)\|_{2}^{\frac{p}{2}}\|v_x(t)\|_2dt.
\end{align*}
Applying Lemma \ref{hg} with $\frac{p}{4},\frac{2-p}{4}$ and $\frac{1}{2}$, it follows that
\begin{align*}
\|a(u)v_x\|_{L^1(0,T;L^2(\R))}&\leq C T^{\frac{1}{2}}\|v\|_{0,T}+ 2^{\frac{p}{2}}CT^{\frac{2-p}{4}}\|u\|_{0,T}^{\frac{p}{2}}\|u_x\|_{L^2(0,T;L^2(\R))}^{\frac{p}{2}}\|v_x\|_{L^2(0,T;L^2(\R))} \\
&\leq  2^{\frac{p}{2}}CT^{\frac{2-p}{4}}\|u\|_{0,T}^p\|v\|_{0,T}+CT^{\frac{1}{2}}\|v\|_{0,T}.
\end{align*}
\end{proof}

\begin{lem}\label{lm2}
For any $T>0$, $b \in L^{\infty}(\R)$ and $u, v, w \in B_{0,T}$, we have
\begin{enumerate}
\item[(i)] $\|bu\|_{L^1(0,T;L^2(\R))} \leq T^{\frac{1}{2}}\|b\|_{\infty}\|u\|_{0,T}$,
\item[(ii)] $\|uw_x\|_{L^1(0,T;L^2(\R))} \leq 2^{\frac{1}
{2}}T^{\frac{1}{4}}\|u\|_{0,T}\|w\|_{0,T}$.
\end{enumerate}
If $1\leq p <2$,
\begin{enumerate}
\item[(iii)] $\|u|v|^{p-1}w_x\|_{L^1(0,T;L^2(\R))} \leq 2^{\frac{p}{2}}T^{\frac{2-p}{4}}\|u\|_{0,T}\|w\|_{0,T}\|v\|_{0,T}^{p-1}$,
\item[(iv)] Consider the map $M: B_{0,T} \rightarrow L^1(0,T;L^2(\R))$ defined by $Mu:=a(u)u_x$. Then, $M$ is locally Lipschitz continuous and
\begin{align*}
\|Mu-Mv\| |_{L^1(0,T;L^2(\R))} \leq C&\left\lbrace 2^{\frac{1}{2}}T^{\frac{1}{4}}\|u\|_{0,T} +2^{\frac{p}{2}}T^{\frac{2-p}{4}}\left(\|u\|_{0,T}^p+\|u\|_{0,T}\|v\|_{0,T}^{p-1} \right. \right.\\
&\left.\left.+\|v\|_{0,T}^{p}\right) +T^{\frac{1}{2}}\right\rbrace\|u-v\|_{0,T},
\end{align*}
where $C$ is a positive constant.
\end{enumerate}
\end{lem}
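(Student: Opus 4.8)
The common thread in all four estimates is to bound the $L^2(\R)$-norm of the relevant product \emph{pointwise in $t$} by means of the Agmon-type inequality \eqref{e6} together with H\"older's inequality in $x$, and then to integrate in time using the generalized H\"older inequality of Lemma \ref{hg}, distributing the available integrability among the factors $\|u_x(t)\|_2,\|v_x(t)\|_2,\|w_x(t)\|_2\in L^2(0,T)$ and a constant. Items (i) and (ii) are the base cases. For (i) one uses $\|b\,u(t)\|_2\le\|b\|_\infty\|u(t)\|_2$ and integrates in $t$ by Cauchy--Schwarz. For (ii) one writes $\|u(t)w_x(t)\|_2\le\|u(t)\|_\infty\|w_x(t)\|_2$, inserts \eqref{e6} to obtain $2^{1/2}\|u(t)\|_2^{1/2}\|u_x(t)\|_2^{1/2}\|w_x(t)\|_2$, extracts $\sup_t\|u(t)\|_2^{1/2}\le\|u\|_{0,T}^{1/2}$, and applies Lemma \ref{hg} to $\|u_x\|_2^{1/2}\cdot\|w_x\|_2\cdot 1$ with exponents $4,2,4$; this produces the factor $T^{1/4}\|u_x\|_{L^2(0,T;L^2)}^{1/2}\|w_x\|_{L^2(0,T;L^2)}$ and hence the claimed bound.

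For (iii) I would run the same scheme with one extra factor: pointwise, $\|u\,|v|^{p-1}w_x\|_2\le\|u\|_\infty\|v\|_\infty^{p-1}\|w_x\|_2$, and two applications of \eqref{e6} give $2^{p/2}\|u\|_2^{1/2}\|u_x\|_2^{1/2}\|v\|_2^{(p-1)/2}\|v_x\|_2^{(p-1)/2}\|w_x\|_2$. After extracting $\sup_t\|u(t)\|_2^{1/2}$ and $\sup_t\|v(t)\|_2^{(p-1)/2}$, the remaining time integral of $\|u_x\|_2^{1/2}\|v_x\|_2^{(p-1)/2}\|w_x\|_2$ is handled by Lemma \ref{hg} with exponents $4,\ \tfrac{4}{p-1},\ 2$ for the three factors and $\tfrac{4}{2-p}$ for the constant $1$; these are admissible because $\tfrac14+\tfrac{p-1}{4}+\tfrac12+\tfrac{2-p}{4}=1$ and $1<p<2$ (the case $p=1$ being exactly (ii)). Recombining each $\sup$-factor with the matching $L^2(0,T;L^2)$-power of the corresponding derivative into $\|u\|_{0,T}$ and $\|v\|_{0,T}^{p-1}$ yields $2^{p/2}T^{(2-p)/4}\|u\|_{0,T}\|w\|_{0,T}\|v\|_{0,T}^{p-1}$.

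For (iv) the plan is to telescope $Mu-Mv=a(u)u_x-a(v)v_x=\big(a(u)-a(v)\big)u_x+a(v)(u-v)_x$. The second summand is estimated directly by Lemma \ref{lm1} (with $v$ in place of $u$ and $u-v$ in place of $v$), contributing $C\big(2^{p/2}T^{(2-p)/4}\|v\|_{0,T}^p+T^{1/2}\big)\|u-v\|_{0,T}$. For the first summand I would invoke the growth hypothesis \eqref{e2} with $j=1$ and the mean value theorem to write $|a(u)-a(v)|\le C\big(1+|\xi|^{p-1}\big)|u-v|$ with $\xi$ between $u$ and $v$, and then use the subadditivity $(|u|+|v|)^{p-1}\le|u|^{p-1}+|v|^{p-1}$, valid since $0\le p-1<1$, to get $|a(u)-a(v)|\le C\big(1+|u|^{p-1}+|v|^{p-1}\big)|u-v|$. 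Multiplying by $|u_x|$ and taking $L^1(0,T;L^2)$-norms splits the first summand into three pieces, controlled respectively by (ii) (giving $2^{1/2}T^{1/4}\|u\|_{0,T}$) and by (iii) with $(u-v,u,u)$ and $(u-v,v,u)$ in place of $(u,v,w)$ (giving $2^{p/2}T^{(2-p)/4}\|u\|_{0,T}^p$ and $2^{p/2}T^{(2-p)/4}\|u\|_{0,T}\|v\|_{0,T}^{p-1}$), each times $\|u-v\|_{0,T}$. Summing the four contributions produces exactly the asserted inequality, and local Lipschitz continuity follows since the bracketed factor stays bounded on bounded subsets of $B_{0,T}$.

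The only genuinely delicate point is the first summand of (iv): one must choose the telescoping so that the surviving derivative is $u_x$ (not $v_x$) in order to match the $\|u\|_{0,T}$-weights appearing in the target, and one must control $a(u)-a(v)$ through the \emph{first-order} growth bound in \eqref{e2} rather than the zeroth-order one. This is precisely where the hypothesis that \eqref{e2} holds up to $j=1$ is used, and where the elementary but essential subadditivity of $t\mapsto t^{p-1}$ for $p<2$ enters. Parts (i)--(iii) are routine once \eqref{e6} and Lemma \ref{hg} are in hand.
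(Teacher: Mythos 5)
Your proposal is correct and follows essentially the same route as the paper: the identical pointwise Agmon-plus-generalized-H\"older scheme with the same exponent choices for (i)--(iii), and for (iv) the same telescoping $Mu-Mv=\big(a(u)-a(v)\big)u_x+a(v)(u-v)_x$ handled by the mean value theorem, items (ii)--(iii), and Lemma \ref{lm1}. The only difference is that you make explicit the subadditivity step $|\xi|^{p-1}\le |u|^{p-1}+|v|^{p-1}$, which the paper leaves implicit in its invocation of the mean value theorem.
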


\begin{proof}
\begin{enumerate}
\item[(i)] Using H\"{o}lder inequality, we have
\begin{align*}
\|bu\|_{L^1(0,T;L^2(\R))} \leq T^{\frac{1}{2}} \|b\|_{\infty}\|u\|_{L^2(0,T;L^2)} \leq T^{\frac{1}{2}} \|b\|_{\infty}\|u\|_{0,T}.
\end{align*}
\item[(ii)] Combining ($\ref{e6}$) 
and Lemma \ref{hg} with $\frac{1}{2}, \frac{1}{4}$ and $\frac{1}{4}$, it follows that
\begin{align*}
\|uw_x\|_{L^1(0,T;L^2(\R))} &\leq \int_0^T \|u(t)\|_{\infty}\|w_x(t)\|_2dt \leq 2^{\frac{1}{2}}\int_0^T\|u(t)\|_{2}^{\frac{1}{2}}\|u_x(t)\|_{2}^{\frac{1}{2}}\|w_x(t)\|_2dt \\
&\leq 2^{\frac{1}{2}}\|u(t)\|_{C([0,T];L^2)}^{\frac{1}{2}}\left(\int_0^T\|u_x(t)\|_{2}^{2}dt\right)^{\frac{1}{4}}\left(\int_0^T\|w_x(t)\|_{2}^{2}dt\right)^{\frac{1}{2}}T^{\frac{1}{4}} \\
&\leq 2^{\frac{1}{2}}T^{\frac{1}{4}}\|u\|_{0,T}\|w_x\|_{0,T}.
\end{align*}
\item[(iii)] We proceed as in (i) combining ($\ref{e6}$) and Lemma \ref{hg} with $\frac{1}{4}, \frac{p-1}{4}, \frac{2-p}{4}$ and $\frac{1}{2}$ to obtain
\begin{align*}
\|u|v|^{p-1}w_x\|_{L^1(0,T;L^2(\R))} &\leq \int_0^T \|u(t)\|_{\infty}\|v(t)\|_{\infty}^{p-1}\|w_x(t)\|_2dt \\
&\leq 2^{\frac{p}{2}}\int_0^T \|u(t)\|_{2}^{\frac{1}{2}}\|u_x(t)\|_{2}^{\frac{1}{2}}\|v(t)\|_{2}^{\frac{p-1}{2}}\|v_x(t)\|_{2}^{\frac{p-1}{2}}\|w_x(t)\|_2dt \\
&\leq 2^{\frac{p}{2}}\|u\|_{0,T}^{\frac{1}{2}}\|v\|_{0,T}^{\frac{p-1}{2}}\int_0^T \|u_x(t)\|_{2}^{\frac{1}{2}}\|v_x(t)\|_{2}^{\frac{p-1}{2}}\|w_x(t)\|_{2}dt \\
&\leq 2^{\frac{p}{2}}\|u\|_{0,T}^{\frac{1}{2}}\|v\|_{0,T}^{\frac{p-1}{2}}\left(\int_0^T \|u_x\|_{2}^{2}dt\right)^{\frac{1}{4}}\left(\int_0^T\|v_x\|_{2}^{2}dt\right)^{\frac{p-1}{4}}\left(\int_0^T\|w_x\|_{2}^{2}dt\right)^{\frac{1}{2}}T^{\frac{2-p}{4}} \\
&\leq 2^{\frac{p}{2}}T^{\frac{2-p}{4}}\|u\|_{0,T}^{\frac{1}{2}}\|v\|_{0,T}^{\frac{p-1}{2}}\|u\|_{0,T}^{\frac{1}{2}}\|v\|_{0,T}^{\frac{p-1}{2}}\|w\|_{0,T},
\end{align*}
which allows us to conclude the result.
\item[(iv)] Note that
\begin{equation*}
\|Mu-Mv\|_{L^1(0,T;L^2(\R))} \leq \|(a(u)-a(v))u_x\|_{L^1(0,T;L^2(\R))} +\|a(v)(u-v)_x\|_{L^1(0,T;L^2(\R))}.
\end{equation*}
Using the Mean Valued Theorem, $(ii)$, $(iii)$ and Lemma \ref{lm1}, we have
\begin{align*}
\|Mu -Mv \|_{0,T}\leq & C\|(1+|u|^{p-1}+|v|^{p-1})|u-v|u_x\|_{L^1(0,T;L^2)}+\|a(v)(u-v)_x\|_{L^1(0,T;L^2)} \\
\leq &C\left\lbrace 2^{\frac{1}{2}}T^{\frac{1}{4}}\|u-v\|_{0,T}\|u\|_{0,T} +2^{\frac{p}{2}}T^{\frac{2-p}{4}}\|u-v\|_{0,T}\|u\|_{0,T}^p\right. \\
&\left.+2^{\frac{p}{2}}T^{\frac{2-p}{4}}\|u-v\|_{0,T}\|u\|_{0,T}\|v\|_{0,T}^{p-1}
+2^{\frac{p}{2}}T^{\frac{2-p}{4}}\|u-v\|_{0,T}\|v\|_{0,T}^{p} +T^{\frac{1}{2}}\|u-v\|_{0,T}\right\rbrace.
\end{align*}
\end{enumerate}
\end{proof}

\noindent The above estimates lead to the following local existence result and a priori estimate:

\begin{prop}\label{r1}
Let $a$ be a function $C^1(\R)$ satisfying
\begin{gather*}
\text{$|a(\mu)| \leq C(1+|\mu|^{p})$ and $|a'(\mu)| \leq C(1+|\mu|^{p-1})$, $\forall \mu \in  \R$,}
\end{gather*}
with $1\leq p<2$. Let $b \in L^{\infty}(\R)$ and $u_0 \in L^2(\R)$. Then, there exist $T>0$ and a unique mild solution $u \in B_{0,T}$ of $(\ref{e1})$. Moreover,
\begin{equation}\label{e117}
\|u(t)\|_2^2+2\int_0^t\|u_x(s)\|_2^2ds+2\int_0^t\int_{\R}b(x)\|u(x,s)\|_2^2dxds=\|u_0\|_2^2, \qquad \forall t \in [0,T].
\end{equation}
\end{prop}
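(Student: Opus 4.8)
The plan is to construct the mild solution as a fixed point of the Duhamel map and then to extract the energy identity directly from the linear theory. Writing $S(t)$ for the semigroup generated by $A_b$, a mild solution of (\ref{e1}) is a fixed point of
\[
\Psi u(t) := S(t)u_0 - \int_0^t S(t-s)\,a(u(s))u_x(s)\,ds = S(t)u_0 - \int_0^t S(t-s)\,(Mu)(s)\,ds,
\]
with $M$ the map of Lemma~\ref{lm2}(iv). First I would check that $\Psi$ is well defined on $B_{0,T}$: by Lemma~\ref{lm1} (taken with $v=u$) the forcing $Mu=a(u)u_x$ lies in $L^1(0,T;L^2(\R))$, so Proposition~\ref{prop5} applies to the linear problem (\ref{ee1}) with source $-Mu$ and yields $\Psi u\in B_{0,T}$ together with the bound $\|\Psi u\|_{0,T}\leq C_T\bigl(\|u_0\|_2+\|Mu\|_{L^1(0,T;L^2(\R))}\bigr)$, where $C_T=2e^{T\|b\|_\infty}$.

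Next I would run the contraction argument on the closed ball $\mathcal{B}_R=\{u\in B_{0,T}:\|u\|_{0,T}\leq R\}$ with $R:=2C_T\|u_0\|_2$. Combining the linear estimate with Lemma~\ref{lm1} gives
\[
\|\Psi u\|_{0,T}\leq C_T\|u_0\|_2 + C_T\bigl(2^{p/2}C\,T^{(2-p)/4}R^{\,p+1}+C\,T^{1/2}R\bigr).
\]
Since $1\leq p<2$, both exponents $(2-p)/4$ and $1/2$ are positive, and $C_T\to 2$ (hence $R$ stays bounded) as $T\to0$; therefore the bracketed term tends to $0$ and for $T$ small it is $\leq\|u_0\|_2$, so $\|\Psi u\|_{0,T}\leq 2C_T\|u_0\|_2=R$ and $\Psi(\mathcal B_R)\subset\mathcal B_R$. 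For the contraction, Lemma~\ref{lm2}(iv) gives, for $u,v\in\mathcal B_R$,
\[
\|\Psi u-\Psi v\|_{0,T}\leq C_T\|Mu-Mv\|_{L^1(0,T;L^2(\R))}\leq C_T\,\kappa(T,R)\,\|u-v\|_{0,T},
\]
where $\kappa(T,R)=C\bigl(2^{1/2}T^{1/4}R+3\cdot 2^{p/2}T^{(2-p)/4}R^{\,p}+T^{1/2}\bigr)$ after bounding $\|u\|_{0,T},\|v\|_{0,T}\leq R$. As $\kappa(T,R)\to0$ when $T\to0$, a further shrinking of $T$ makes $C_T\kappa(T,R)\leq\tfrac12$, and Banach's fixed point theorem produces a unique $u\in\mathcal B_R$ with $\Psi u=u$; uniqueness in all of $B_{0,T}$ then follows from the local Lipschitz bound by the usual continuation argument.

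Finally, for the energy identity (\ref{e117}) I would observe that the fixed point solves the linear problem (\ref{ee1}) with $f=-a(u)u_x\in L^1(0,T;L^2(\R))$, so identity (\ref{e116}) of Proposition~\ref{prop5} holds and reduces to
\[
\|u(t)\|_2^2+2\int_0^t\|u_x\|_2^2\,ds+2\int_0^t\int_{\R}b\,u^2\,dx\,ds=\|u_0\|_2^2-2\int_0^t\int_{\R}a(u)\,u\,u_x\,dx\,ds.
\]
It then suffices to show the last integral vanishes. For a.e.\ $s$ one has $u(s)\in H^1(\R)\hookrightarrow C_0(\R)$; introducing the primitive $G(\mu)=\int_0^\mu a(\sigma)\sigma\,d\sigma$, the chain rule for compositions of $C^1$ maps (with derivative bounded on the range of $u$) with $H^1$ functions gives $\partial_x G(u)=a(u)\,u\,u_x\in L^1(\R)$ while $G(u)$ vanishes at $\pm\infty$, whence $\int_{\R}a(u)\,u\,u_x\,dx=\int_{\R}\partial_x G(u)\,dx=0$.

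I expect the only genuinely delicate point to be this last justification: the mild solution is only known to satisfy $u(s)\in H^1(\R)$ for a.e.\ $s$, so the cancellation must be argued at the $H^1$ level via the primitive $G$ and the embedding $H^1(\R)\hookrightarrow C_0(\R)$ (or by approximating $u(s)$ by smooth, compactly supported functions and passing to the limit), rather than by a naive integration by parts that would require extra regularity. The fixed-point steps themselves are routine once Lemmas~\ref{lm1}--\ref{lm2} are available, with the positive powers of $T$ supplying the smallness needed for both the self-mapping and the contraction.
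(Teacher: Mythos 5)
Your proof is correct, and the fixed-point half is essentially the paper's own argument: the same Duhamel map, the same ball of radius $R=2C_T\|u_0\|_2$, the same estimates from Lemmas \ref{lm1} and \ref{lm2}, and the same use of positive powers of $T$ for both invariance and contraction. Where you genuinely diverge is the derivation of the energy identity (\ref{e117}). The paper proves it by running the Picard iterates $v_n=\Gamma v_{n-1}$, writing the linear identity (\ref{e116}) for each $v_n$ with source $-Mv_{n-1}$, and letting $n\to\infty$; this requires (and the paper does not detail) the convergence of the cross term $\int_0^t\int_{\R}Mv_{n-1}\,v_n\,dx\,ds$ to $\int_0^t\int_{\R}Mu\,u\,dx\,ds$ along the iteration. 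You instead observe that the fixed point $u$ is itself the unique mild solution of the linear problem (\ref{ee1}) with the frozen source $f=-a(u)u_x$, which lies in $L^1(0,T;L^2(\R))$ by Lemma \ref{lm1}, so identity (\ref{e116}) applies to $u$ directly with no limiting argument; this is cleaner and removes the unjustified passage to the limit. Both routes then rest on the same cancellation $\int_{\R}a(u)\,u\,u_x\,dx=0$, and your justification — the primitive $G(\mu)=\int_0^\mu a(\sigma)\sigma\,d\sigma$, the chain rule for $C^1$ compositions with $H^1(\R)$ functions, and the decay of $G(u)$ at $\pm\infty$ via $H^1(\R)\hookrightarrow C_0(\R)$ — is the correct one. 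Note that the paper's stated justification invokes the primitive $A(v)=\int_0^v a(s)\,ds$ of $a$ alone, which handles $\int_{\R}a(u)u_x\,dx$ rather than the term $\int_{\R}a(u)\,u\,u_x\,dx$ that actually appears; this is a small slip in the paper which your choice of $G$ repairs.
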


\begin{proof}
Let $T>0$ to be determined later.  For each $u \in B_{0,T}$  consider the problem
\begin{equation}\label{e9}
\left\lbrace \begin{tabular}{l}
$v_t = A_bv - Mu$ \\
$v(0)=u_0$,
\end{tabular} \right.
\end{equation}
where $A_bv=\partial_x^2v-\partial_x^3v-bv$ and $Mu=a(u)u_x$. Since $A_b$ generates a strongly continuous semigroup $\{S(t)\}_{t\geq 0}$ of contractions in $L^2(\R)$, Lemma \ref{lm1} and  Proposition \ref{prop5} allows us to conclude that (\ref{e9}) has a unique mild solution $v \in B_{0,T}$, such that
\begin{equation}\label{e10}
\|v\|_{0,T}\leq C_{T} \{ \|u_0\|_2+\|Mu\|_{L^1(0,T;L^2(\R))}\},
\end{equation}
where $C_{T}=2e^{T\|b\|_{\infty}}$. Thus, we can define the operator
\[
\text{$\Gamma:B_{0,T}\longrightarrow B_{0,T}$ given by $\Gamma (u) = v$.}
\]
By using Lemma $\ref{lm1}$ and $(\ref{e10})$, we have
\begin{equation*}
\|\Gamma u \|_{0,T}\leq C_T \{\|u_0\|_2+  2^{p/2}CT^{\frac{2-p}{4}}\|u\|_{0,T}^{p+1}+CT^{\frac{1}{2}}\|u\|_{0,T}\}.
\end{equation*}
Thus, for $u \in B_R(0):=\{u \in B_{0,T}: \|u\|_{B_{0,T}}\leq R\}$, it follows that
\begin{equation*}
\|\Gamma u \|_{0,T}\leq C_T \{\|u_0\|_2+  2^{p/2}CT^{\frac{2-p}{4}}R^{p+1}+CT^{\frac{1}{2}}R. \}
\end{equation*}
Choosing $R=2C_T\|u_0\|_2$, we obtain the following estimate
\begin{equation*}
\|\Gamma u \|_{0,T}\leq \left(K_1+\frac{1}{2}\right)R,
\end{equation*}
where $K_1=K_1(T)= 2^{p/2}C_TCT^{\frac{2-p}{4}}R^{p}+C_TCT^{\frac{1}{2}} .$ On the other hand, note that $\Gamma u - \Gamma w$ is solutions of
\begin{equation*}
\left\lbrace \begin{tabular}{l}
$v_t = A_bv - (Mu-Mw)$ \\
$v(0)=0$.
\end{tabular} \right.
\end{equation*}
Again, by applying Proposition \ref{prop5}, we have
\begin{align*}
\|\Gamma u -\Gamma w \|_{0,T}&\leq C_T\|Mu-Mw\|_{L^1(0,T;L^2)}
\end{align*}
and estimate $(iv)$ in Lemma \ref{lm2} allows us to conclude that
\begin{align*}
\|\Gamma u -\Gamma w \|_{0,T}\leq C_TC&\left\lbrace 2^{\frac{1}{2}}T^{\frac{1}{4}}\|u\|_{0,T} +2^{\frac{p}{2}}T^{\frac{2-p}{4}}\left(\|u\|_{0,T}^p+\|u\|_{0,T}\|w\|_{0,T}^{p-1} \right. \right. \notag\\
&\left.\left.+\|w\|_{0,T}^{p}\right) +T^{\frac{1}{2}}\right\rbrace\|u-w\|_{0,T}.
\end{align*}
Suppose that $u,w \in B_R(0)$ defined above. Then,
\begin{align*}
\|\Gamma u -\Gamma w \|_{B_{0,T}}&\leq K_2 \|u-w\|_{B_{0,T}},
\end{align*}
where $K_2=K_2(T)= C_TC\{2^{\frac{1}{2}}T^{\frac{1}{4}}R +3(2^{\frac{p}{2}})T^{\frac{2-p}{4}}R^{p}+T^{\frac{1}{2}} \}$. Since  $K_1\leq K_2$, we can  choose $T>0$ to obtain $K_2<\frac{1}{2}$ and
\begin{equation*}
\left\lbrace\begin{tabular}{l}
$\|\Gamma u\|_{B_{0,T}}\leq R$ \\
$\|\Gamma u-\Gamma w\|_{B_{0,T}}< \frac{1}{2}\|u-w\|_{B_{0,T}}$
\end{tabular}\right., \qquad \forall u,w \in B_R(0) \subset B_{0,T}.
\end{equation*}
Hence $\Gamma: B_R(0) \longrightarrow B_R(0)$ is a contraction and, by Banach fixed point theorem, we obtain a unique $u\in B_R(0)$, such that $\Gamma (u)=u$. Consequently, $u$ is a unique local mild solution of (\ref{e1}) and
\begin{equation}\label{e13}
\|u\|_{B_{0,T}}\leq 2C_T\|u_0\|_2.
\end{equation}
In order to prove (\ref{e117}) consider $v_n=\Gamma v_{n-1}$, $n\geq 1$. Since $\Gamma$ is a contraction, we have
\begin{align*}
\text{$\lim_{n\rightarrow \infty}v_n = u$ in $B_{0,T}$}.
\end{align*}
On the other hand, by (\ref{e116}) in Proposition \ref{prop5}, $v_n$ verifies the identity
\begin{equation*}
\|v_n(t)\|_2^2+2\int_0^t\|v_{nx}(s)\|_2^2ds+2\int_0^t\int_{\R}b(x)\|v(x,s)\|_2^2dxds=\|u_0\|_2^2+2\int_0^t\int_{\R}Mv_{n-1}(x,s)v_n(x,s)dxds.
\end{equation*}
Then, taking the limit as $n \rightarrow \infty$, we get
\begin{equation*}
\|u(t)\|_2^2+2\int_0^t\|u_{x}(s)\|_2^2ds+2\int_0^t\int_{\R}b(x)\|u(x,s)\|_2^2dxds=\|u_0\|_2^2
\end{equation*}
since the limit of the last term is $\int_0^t\int_{\R}Mu(x,s)u(x,s)dxds=0$. In fact, $\int_{\R}a(u(x))u_x(x)dx=\int_{\R}[A(u(x))]_xdx$ where $A(v)= \int_{0}^{v}a(s)ds$.
\end{proof}

\noindent From Proposition \ref{r1} we obtain our first global in time existence result:

\begin{thm}\label{teo1}
Let $a$ be a function $C^1(\R)$ satisfying
\begin{gather*}
\text{$|a(\mu)| \leq C(1+|\mu|^{p})$ and  $|a'(\mu)| \leq C(1+|\mu|^{p-1}), \quad \forall \mu \in \R,$}
\end{gather*}
with $1\leq p<2$. Let $b \in L^{\infty}(\R)$ and $u_0 \in L^2(\R)$. Then, there exist a unique global mild solution $u$ of $(\ref{e1})$, such that, for each $T>0$, there exist a nondecreasing continuous function $\beta_0: \R_+\rightarrow \R_+$ which satisfies
\begin{equation}\label{e8}
\|u\|_{0,T}\leq \beta_0(\|u_0\|_2)\|u_0\|_2.
\end{equation}
Moreover, the following energy identity holds for all $t\geq 0:$
\begin{equation}\label{e118}
\|u(t)\|_2^2+2\int_0^t\|u_x(s)\|_2^2ds+2\int_0^t\int_{\R}b(x)\|u(x,s)\|_2^2dxds=\|u_0\|_2^2.
\end{equation}
\end{thm}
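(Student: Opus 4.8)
The plan is to bootstrap from the local existence result of Proposition~\ref{r1} to a global-in-time solution by showing that the local solution cannot blow up in finite time, and the key observation is that the energy identity~\eqref{e117} already provides the a~priori control we need. First I would note that Proposition~\ref{r1} guarantees a local mild solution on a maximal interval $[0,T^*)$, where the local existence time $T$ depends only on $\|u_0\|_2$ through the bound~\eqref{e13}, namely $\|u\|_{0,T}\leq 2C_T\|u_0\|_2$ with $C_T=2e^{T\|b\|_\infty}$. The crucial point is that the contraction argument fixes the existence time as a function of the $L^2$-norm of the initial datum alone; consequently, if one can bound $\|u(t)\|_2$ uniformly on any finite interval, the solution can be extended by reapplying the local theory in uniform time steps, ruling out finite-time blow-up.

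Next I would extract the a~priori bound from the energy identity. Under hypothesis~\eqref{hyp b} we have $b(x)\geq \lambda_0+\lambda_1(x)$ with $\lambda_0>0$, but in the present theorem $b$ is merely assumed to lie in $L^\infty(\R)$; nevertheless, dropping the (possibly indefinite) damping term in~\eqref{e117} is not immediate, so I would instead retain all terms and estimate the damping contribution. Since $b\in L^\infty$, one has
\begin{equation*}
\left|2\int_0^t\int_{\R}b(x)u^2(x,s)\,dx\,ds\right|\leq 2\|b\|_\infty\int_0^t\|u(s)\|_2^2\,ds,
\end{equation*}
and combining this with the dissipative term (which has a favorable sign) in~\eqref{e117} yields
\begin{equation*}
\|u(t)\|_2^2\leq \|u_0\|_2^2+2\|b\|_\infty\int_0^t\|u(s)\|_2^2\,ds.
\end{equation*}
Grönwall's inequality then gives $\|u(t)\|_2^2\leq \|u_0\|_2^2\,e^{2\|b\|_\infty t}$, an explicit bound on any interval $[0,T]$ depending only on $\|u_0\|_2$, $\|b\|_\infty$, and $T$. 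This is precisely the uniform control that prevents blow-up.

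With the a~priori bound in hand, I would construct the global solution by the standard continuation argument: define $T^*$ as the supremum of times up to which the solution exists in $B_{0,T}$, and suppose for contradiction that $T^*<\infty$. Because the local existence time from Proposition~\ref{r1} depends only on the size of the datum, and because the Grönwall estimate caps $\|u(t)\|_2$ by the fixed quantity $\|u_0\|_2\,e^{\|b\|_\infty T^*}$ for all $t<T^*$, one can restart the local construction from any time close to $T^*$ with a step size bounded below by a positive constant, thereby extending the solution past $T^*$ and contradicting maximality. Hence $T^*=\infty$. Uniqueness on each finite interval is inherited directly from Proposition~\ref{r1}. Finally, the energy identity~\eqref{e118} for all $t\geq 0$ follows from~\eqref{e117} by concatenation over successive local intervals, and the bound~\eqref{e8} is obtained by feeding the Grönwall estimate back into~\eqref{e13}: one takes $\beta_0(r):=2C_T$ evaluated with the global-in-$[0,T]$ norm, which is a nondecreasing continuous function of $\|u_0\|_2$.

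I expect the main obstacle to be the bookkeeping in the continuation step rather than any deep analytic difficulty. One must verify carefully that the local existence time supplied by the fixed-point argument is genuinely bounded below in terms of $\|u(t_0)\|_2$ uniformly as $t_0\to T^*$ --- this requires tracing how $T$ was chosen to make $K_2<\tfrac12$ in the proof of Proposition~\ref{r1} and confirming that the same $T$ works whenever the restart datum has $L^2$-norm below the Grönwall ceiling. A secondary subtlety is that $\beta_0$ must be a single function valid for the whole interval $[0,T]$, so I would be careful to express $\|u\|_{0,T}$ (which involves the $L^2(0,T;H^{s+1})$ part as well) in terms of $\|u_0\|_2$ using the energy identity to control $\int_0^T\|u_x\|_2^2\,ds$; indeed, \eqref{e118} bounds this dissipation integral by $\|u_0\|_2^2$ plus the damping term, again controlled by Grönwall, which closes the estimate.
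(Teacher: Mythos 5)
Your proposal is correct and follows essentially the same route as the paper: the paper's (much terser) proof likewise combines the local theory of Proposition~\ref{r1}, the a~priori $L^2$ control coming from the energy identity~\eqref{e117} with $b\in L^\infty(\R)$, and a continuation argument to conclude $T_{max}=\infty$, then reads off \eqref{e8} from \eqref{e13} and \eqref{e118} from \eqref{e117}. In fact you spell out the Gr\"onwall estimate and the uniform lower bound on the restart time, which the paper only asserts implicitly through the bound $\|u\|_{0,t}\leq 4e^{\|b\|_\infty t}\|u_0\|_2$.
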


\begin{proof}
By Proposition \ref{r1}, there exists a unique mild solution $u \in B_{0,T}$, for all $T < T_{max}\leq \infty$. Moreover,
\begin{equation*}
\|u\|_{0,T} \leq 4e^{\|b\|_{\infty}t}\|u_0\|_2, \qquad \forall t \in [0, T_{max}),
\end{equation*}
which implies that $u$ is a global mild solution of (\ref{e1}). On the other hand, (\ref{e13}) implies (\ref{e8})  with $\beta_0(s)=2C_T$. The identity (\ref{e118}) is a direct consequence of (\ref{e117}) in Proposition \ref{r1}.
\end{proof}

It follows from Theorem \ref{teo1} that, for each fixed $T >0$, the solution map
\begin{equation}\label{e100}
\A: L^2(\R) \rightarrow B_{0,T}, \quad \A u_0=u
\end{equation}
is well defined. Moreover, we have the following result:

\begin{prop}\label{prop2}
The solution map (\ref{e100}) is locally Lipschitz continuous, i.e, there exists a continuous function $C_0: \R^+\times (0,\infty) \rightarrow \R^+$, nondecreasing in its first variable, such that, for all $u_0, v_0 \in L^2(\R)$, we have
\begin{equation*}
\|\A u_0-\A v_0\|_{0,T} \leq C_0\left( \|u_0\|_2+\|v_0\|_2,T\right)\|u_0-v_0\|_2.
\end{equation*}
\end{prop}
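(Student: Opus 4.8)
The plan is to set $u=\A u_0$, $v=\A v_0$ and study the difference $w=u-v$, which is the mild solution of $w_t=A_b w-(Mu-Mv)$ with $w(0)=u_0-v_0$. Applying Proposition~\ref{prop5} to this problem and estimating the source via part $(iv)$ of Lemma~\ref{lm2} yields a bound of the shape
\[
\|w\|_{0,T}\leq C_T\|u_0-v_0\|_2+C_T\,\Phi(T)\,\|w\|_{0,T},
\]
where $\Phi(T)$ collects the factors $2^{1/2}T^{1/4}\|u\|_{0,T}$, the term $2^{p/2}T^{(2-p)/4}\bigl(\|u\|_{0,T}^p+\|u\|_{0,T}\|v\|_{0,T}^{p-1}+\|v\|_{0,T}^p\bigr)$, and $T^{1/2}$. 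The obstacle is visible at once: the nonlinear contribution is controlled by $\|w\|_{0,T}$ itself with a constant $C_T\Phi(T)$ that need not be small on the full interval, so the term cannot simply be absorbed on the left. This is the only real difficulty; the remainder of the argument is designed to circumvent it.

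To overcome this I would exploit that, since $1\le p<2$, every exponent of $T$ appearing in $\Phi$ is strictly positive, so $\Phi(\delta)\to 0$ as $\delta\to 0$ once the sizes of $u$ and $v$ are frozen. By Theorem~\ref{teo1} the global solutions obey the a priori bound $\|u\|_{0,T}\leq\beta_0(\|u_0\|_2)\|u_0\|_2$ and its analogue for $v$. Writing $r=\|u_0\|_2+\|v_0\|_2$ and $M=\beta_0(r)\,r$, the monotonicity of $\beta_0$ gives $\|u\|_{0,T},\|v\|_{0,T}\le M$; moreover, restricting to any subinterval $[t_0,t_0+\delta]\subset[0,T]$ only decreases the $B_{0,T}$-norm, so the same bound $M$ holds there. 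Denoting by $\|\cdot\|_{0,[a,b]}$ the norm of $B_{0,T}$ computed on $[a,b]$ and invoking Proposition~\ref{prop5} on the shifted interval (the equation being autonomous), I can therefore fix a step $\delta=\delta(M)>0$, uniform in the base point $t_0$, for which the local analogue of $C_\delta\Phi(\delta)$ is at most $1/2$. Absorbing the nonlinear term then produces the closable estimate
\[
\|w\|_{0,[t_0,t_0+\delta]}\leq 2C_\delta\,\|w(t_0)\|_2,\qquad C_\delta=2e^{\delta\|b\|_{\infty}}.
\]

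The final step is to chain these local estimates over the $N=\lceil T/\delta\rceil$ consecutive intervals $[t_k,t_{k+1}]$ of length $\delta$ covering $[0,T]$. Since $\|w(t_{k+1})\|_2\le\|w\|_{0,[t_k,t_{k+1}]}\le 2C_\delta\|w(t_k)\|_2$, induction yields $\|w(t_k)\|_2\le(2C_\delta)^k\|u_0-v_0\|_2$; substituting back into each local bound controls $\sup_{[0,T]}\|w\|_2$ and, after summing the squared pieces $\|w_x\|_{L^2(t_k,t_{k+1};L^2)}^2$, the whole norm $\|w\|_{0,T}$ by a constant of the form $(1+\sqrt N)(2C_\delta)^N$. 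Because $\delta$, $N$ and $C_\delta$ depend only on $M=\beta_0(r)\,r$ and on $T$, and $\beta_0$ is continuous and nondecreasing, this constant is a continuous function of $(r,T)$, nondecreasing in $r$, and hence furnishes the required $C_0\bigl(\|u_0\|_2+\|v_0\|_2,T\bigr)$. The delicate points are thus purely organizational: making $\delta$ uniform across subintervals through the global bound~\eqref{e8}, and tracking the dependence on the data to obtain the stated monotonicity and continuity of $C_0$.
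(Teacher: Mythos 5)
Your proposal is correct and follows essentially the same route as the paper: both apply Proposition \ref{prop5} together with Lemma \ref{lm2}(iv) to the difference of solutions, use the global bound from Theorem \ref{teo1} to choose a time step $\delta$ (the paper's $\theta$, fixed by an explicit smallness condition) uniformly over subintervals so the nonlinear term can be absorbed, and then chain the resulting local Lipschitz estimates $\|w\|_{0,[t_k,t_{k+1}]}\leq 2C_\delta\|w(t_k)\|_2$ geometrically across $[0,T]$, yielding a constant of the form $N(2C_T)^N$ depending continuously and monotonically on $\|u_0\|_2+\|v_0\|_2$ and $T$. The only cosmetic difference is in how the local pieces are reassembled (your $\sqrt{N}$ bookkeeping for the $L^2$-in-time part versus the paper's direct summation of subinterval norms), which does not change the argument.
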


\begin{proof}
Let $0 < \theta \leq T$ and $n=\left[ \frac{T}{\theta}\right]$. By Theorem  \ref{teo1},
\begin{equation}\label{e102}
\|\A u_0\|_{0,\theta}\leq 2C_{\theta} \|u_0\|_2,
\end{equation}
and
\begin{equation*}
\|\A u_0-\A v_0\|_{0, \theta} \leq C_{\theta} \left\lbrace \|u_0-v_0\|_2+\|M(\A u_0)-M(\A v_0)\|_{L^1(0,\theta;L^2(\R))}\right\rbrace,
\end{equation*}
where $C_{\theta}=2e^{\theta\|b\|_{\infty}}$. By Lemma \ref{lm2},
\begin{align*}
\|\A u_0-\A v_0\|_{0, \theta} &\leq C_{\theta} \|u_0-v_0\|_2  + C_{\theta}C\left\lbrace 2^{\frac{1}{2}}\theta^{\frac{1}{4}}\|\A u_0\|_{0,\theta} +2^{\frac{p}{2}}\theta^{\frac{2-p}{4}}\left(\|\A u_0\|_{0,\theta}^p+\|\A u_0\|_{0,\theta}\|\A v_0\|_{0,\theta}^{p-1} \right. \right. \notag\\
&\left.\left.+\|\A v_0\|_{0,\theta}^{p}\right) +\theta^{\frac{1}{2}}\right\rbrace\|\A u_0-\A v_0\|_{0,\theta},
\end{align*}
and applying (\ref{e102}), it follows that
\begin{align}
\|\A u_0-\A v_0\|_{0, \theta} &\leq C_{\theta} \|u_0-v_0\|_2  + C_{\theta}C\left\lbrace 2^{\frac{3}{2}}\theta^{\frac{1}{4}}C_{\theta}\|u_0\|_2 +2^{\frac{3p}{2}}\theta^{\frac{2-p}{4}}C_{\theta}^p\left(\|u_0\|_2^p+\|u_0\|_2\|v_0\|_2^{p-1} \right. \right. \notag\\
&\left.\left.+\|v_0\|_2^{p}\right) +\theta^{\frac{1}{2}}\right\rbrace\|\A u_0-\A v_0\|_{0,\theta} \notag \\
&\leq  C_{T} \|u_0-v_0\|_2  + C_{T}C \theta^{\frac{2-p}{4}}\left\lbrace 2^{\frac{3}{2}}\theta^{\frac{p-1}{4}}C_{T}(\|u_0\|_2+\|v_0\|_2) +2^{\frac{3p}{2}}C_{T}^p\left(\|u_0\|_2+\|v_0\|_2\right)^p \right. \notag\\
&\left. +\theta^{\frac{p}{4}}\right\rbrace\|\A u_0-\A v_0\|_{0,\theta} \notag \\
&\leq C_{T} \|u_0-v_0\|_2  + C_{T}C \theta^{\frac{2-p}{4}}\left\lbrace 2^{\frac{5}{2}}T^{\frac{p-1}{4}}C_{T}^2(\|u_0\|_2+\|v_0\|_2) +2^{\frac{5p}{2}}C_{T}^{2p}\left(\|u_0\|_2+\|v_0\|_2\right)^p \right. \notag\\
&\left. +T^{\frac{p}{4}}\right\rbrace\|\A u_0-\A v_0\|_{0,\theta}. \notag
\end{align}
Choosing  $\theta$ small enough, such that
\begin{equation}\label{e103}
\theta < \left[\frac{1}{2C_{T}C\left\lbrace 2^{\frac{5}{2}}T^{\frac{p-1}{4}}C_{T}\|u_0\|_2+\|v_0\|_2 +2^{\frac{5p}{2}}C_{T}^{2p} (\|u_0\|_2+\|v_0\|_2)^p+T^{\frac{p}{4}}\right\rbrace} \right]^{\frac{4}{2-p}},
\end{equation}
we have
\begin{equation}\label{e104}
\|\A u_0-\A v_0\|_{0, \theta} \leq 2C_{T} \|u_0-v_0\|_2.
\end{equation}
Analogously,  we can deduce that
\begin{equation*}
\|\A u_0\|_{0,[k\theta,(k+1)\theta]}\leq 2C_{\theta} \|u(k\theta)\|_2, \qquad k=0,1,...,n-1,
\end{equation*}
where $\|\cdot\|_{0,[k\theta,(k+1)\theta]}$ denotes the norm of
\begin{equation*}
B_{0,[k\theta,(k+1)\theta]}:=C\left([k\theta,(k+1)\theta];L^2(\R)\right) \cap L^2(k\theta,(k+1)\theta;H^1(\R)).
\end{equation*}
Moreover, by using the same arguments, we have
\begin{align*}
\|\A u_0-\A v_0\|_{0,[k\theta,(k+1)\theta]} &\leq C_{T} \|u(k\theta)-v(k\theta)\|_2  + C_{T}C \theta^{\frac{2-p}{4}}\left\lbrace 2^{\frac{3}{2}}T^{\frac{p-1}{4}}C_{T}(\|u(k\theta)\|_2+\|v(k\theta)\|_2) \right. \\
&\left.+2^{\frac{3p}{2}}C_{T}^p\left(\|u(k\theta)\|_2+\|v(k\theta)\|_2\right)^p +T^{\frac{p}{4}}\right\rbrace\|\A u_0-\A v_0\|_{0,[k\theta,(k+1)\theta]}.
\end{align*}
Combining (\ref{e102}) and the above estimate, it follows that
\begin{align*}
\|\A u_0-\A v_0\|_{0,[k\theta,(k+1)\theta]} &\leq C_{T} \|u(k\theta)-v(k\theta)\|_2  + C_{T}C \theta^{\frac{2-p}{4}}\left\lbrace 2^{\frac{5}{2}}T^{\frac{p-1}{4}}C_{T}^2(\|u_0\|_2+\|v_0\|_2) \right. \\
&\left.+2^{\frac{5p}{2}}C_{T}^{2p}\left(\|u_0\|_2+\|v_0\|_2\right)^p +T^{\frac{p}{4}}\right\rbrace\|\A u_0-\A v_0\|_{0,[k\theta,(k+1)\theta]}.
\end{align*}
Finally, from (\ref{e103}), we get
\begin{align}\label{e105}
\|\A u_0-\A v_0\|_{0,[k\theta,(k+1)\theta]} \leq 2C_{T} \|u(k\theta)-v(k\theta)\|_2, \quad k=0,1,...,n-1.
\end{align}
On the other hand, note that (\ref{e104}) and (\ref{e105}) imply that
\begin{align*}
\|\A u_0-\A v_0\|_{0,[k\theta,(k+1)\theta]} \leq 2^{k}C_{T}^{k} \|u_0-v_0\|_2, \quad k=0,1,...,n-1,
\end{align*}
and, therefore,
\begin{equation*}
\|\A u_0-\A v_0\|_{0,[k\theta,(k+1)\theta]} \leq 2^{n}C_{T}^{n} \|u_0-v_0\|_2.
\end{equation*}
Finally,
\begin{align*}
\|\A u_0-\A v_0\|_{0,T}&\leq \sum_{k=0}^{n-1} \|\A u_0-\A v_0\|_{0,[k\theta,(k+1)\theta]} \leq \sum_{k=0}^{n-1} 2^{n}C_{T}^{n} \|u_0-v_0\|_2  \\
&\leq  2^{n}C_{T}^{n}n \|u_0-v_0\|_2 \leq C_0(\|u_0\|_2+\|v_0\|_2)\|u_0-v_0\|_2,
\end{align*}
where $C_0(s)=\frac{T}{\theta(s)}\left[2C_T\right]^{\frac{T}{\theta(s)}}$.
\end{proof}


Next, we will show well-posedness in $B_{3,T}$, with $1\leq p <2$. Therefore, let us first consider the following linearized problem given by
\begin{equation}\label{ee105}
\left\lbrace \begin{tabular}{l l}
$v_t +v_{xxx}-v_{xx}+[a(u)v]_x+bv=0$ & in $\R \times (0, \infty)$ \\
$v(0)=v_0$ & in $\R \times (0, \infty)$.
\end{tabular}\right.
\end{equation}
Then, we can establish the following proposition:

\begin{prop}\label{prop4}
Let $a$ be a function $ C^1(\R)$ satisfying
\begin{gather*}\label{e107}
\text{$|a(\mu)| \leq C(1+|\mu|^{p})$ and $|a'(\mu)| \leq C(1+|\mu|^{p-1}), \quad \forall \mu \in \R$,}
\end{gather*}
with $1\leq p<2$. Let $T>0$, $b \in L^{\infty}(\R)$, $u \in B_{0,T}$ and $v_0 \in L^2(\R)$. Then, the problem (\ref{ee105}) admits a unique solution $v \in B_{0,T}$, such that
\begin{equation*}\label{e106}
\|v\|_{0,T} \leq \sigma(\|u\|_{0,T})\|v_0\|_2,
\end{equation*}
where $\sigma: \R^+ \rightarrow \R^+$ is a nondecreasing  continuous function.
\end{prop}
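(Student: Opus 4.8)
The plan is to recast \eqref{ee105} via Duhamel's formula and solve it by a contraction argument analogous to Proposition~\ref{r1}, exploiting the crucial fact that here the equation is \emph{linear} in the unknown $v$ (with $u$ a fixed coefficient). Writing $A_b = \partial_x^2 - \partial_x^3 - bI$ and treating $-[a(u)v]_x$ as a forcing term, a mild solution is a fixed point of
\[
\Gamma(w)(t) = S(t)v_0 - \int_0^t S(t-s)\,[a(u)w]_x(s)\,ds,
\]
where $\{S(t)\}_{t\ge 0}$ is the semigroup generated by $A_b$. By Proposition~\ref{prop5} (with $f = -[a(u)w]_x$), $\Gamma$ maps $B_{0,T}$ into itself and
\[
\|\Gamma w\|_{0,T} \le C_T\big\{\|v_0\|_2 + \|[a(u)w]_x\|_{L^1(0,T;L^2(\R))}\big\},\qquad C_T = 2e^{T\|b\|_\infty}.
\]
By linearity, $\Gamma w_1 - \Gamma w_2 = -\int_0^\cdot S(\cdot-s)\,[a(u)(w_1-w_2)]_x\,ds$, so the same bound controls differences.

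The crux is to estimate $\|[a(u)w]_x\|_{L^1(0,T;L^2(\R))}$ linearly in $\|w\|_{0,T}$ with a coefficient carrying a positive power of $T$. Expanding $[a(u)w]_x = a(u)w_x + a'(u)u_x w$, the first term is controlled directly by Lemma~\ref{lm1} (with $v=w$). For the second, the growth hypothesis $|a'(\mu)| \le C(1+|\mu|^{p-1})$ gives $\|a'(u)u_x w\|_{L^1(0,T;L^2)} \le C\|w u_x\|_{L^1(0,T;L^2)} + C\|w|u|^{p-1}u_x\|_{L^1(0,T;L^2)}$, and these two pieces are exactly of the form handled by parts (ii) and (iii) of Lemma~\ref{lm2} upon relabeling the roles of the factors (taking $w$ as the undifferentiated factor and $u_x$ as the derivative, so that $\|wu_x\|\le 2^{1/2}T^{1/4}\|w\|_{0,T}\|u\|_{0,T}$ and $\|w|u|^{p-1}u_x\|\le 2^{p/2}T^{(2-p)/4}\|w\|_{0,T}\|u\|_{0,T}^{p}$). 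Collecting everything yields
\[
\|[a(u)w]_x\|_{L^1(0,T;L^2(\R))} \le K(T,\|u\|_{0,T})\,\|w\|_{0,T},
\]
where $K$ is a finite sum of terms of the form $T^{\gamma}\|u\|_{0,T}^{q}$ with exponents $\gamma \in \{\tfrac14, \tfrac{2-p}{4}, \tfrac12\}$, all positive; in particular $K(T,\cdot) \to 0$ as $T \to 0^+$ and $K$ is nondecreasing in its second argument.

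With this in hand, choose $\tau = \tau(\|u\|_{0,T}) \in (0,T]$ so small that $C_\tau K(\tau,\|u\|_{0,T}) \le \tfrac12$. Then $\Gamma$ is a $\tfrac12$-contraction on all of $B_{0,\tau}$ (no ball restriction is needed, since $K$ is independent of $w$), so it has a unique fixed point $v$ there with $\|v\|_{0,\tau} \le 2C_\tau\|v_0\|_2$. Because $\|u\|_{0,[k\tau,(k+1)\tau]} \le \|u\|_{0,T}$ on every subinterval, the same $\tau$ works on each $[k\tau,(k+1)\tau]$; solving successively with datum $v(k\tau)$ and concatenating produces a unique $v \in B_{0,T}$. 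Iterating the one-step bound gives $\|v(k\tau)\|_2 \le (2C_\tau)^k\|v_0\|_2$, hence $\|v\|_{0,[k\tau,(k+1)\tau]} \le (2C_\tau)^{k+1}\|v_0\|_2$, and summing over $k=0,\dots,n-1$ with $n=\lceil T/\tau\rceil$ yields $\|v\|_{0,T} \le n(2C_\tau)^n\|v_0\|_2 =: \sigma(\|u\|_{0,T})\|v_0\|_2$. A larger $\|u\|_{0,T}$ forces a smaller $\tau$ and a larger $n$, so $\sigma$ is nondecreasing (and can be taken continuous by passing to a continuous majorant).

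I expect the main obstacle to be the estimate for the term $a'(u)u_x w$: unlike $a(u)w_x$, it involves the coefficient derivative $u_x$, which belongs only to $L^2(0,T;L^2(\R))$, so the generalized H\"older inequality of Lemma~\ref{hg} must be applied carefully to distribute the available time integrability while preserving a positive power of $T$. Once that bound is secured, the contraction, the gluing across subintervals, and the linear-in-$\|v_0\|_2$ estimate all follow the same pattern as Propositions~\ref{r1} and~\ref{prop2}.
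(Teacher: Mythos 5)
Your proposal is correct and follows essentially the same route as the paper: Duhamel's formula plus a contraction argument based on Proposition~\ref{prop5}, with $[a(u)w]_x = a(u)w_x + a'(u)u_x w$ estimated in $L^1(0,T;L^2(\R))$ via Lemma~\ref{lm1} and parts (ii)--(iii) of Lemma~\ref{lm2}, then a small-time fixed point extended to all of $[0,T]$ by iteration. If anything, you are slightly more careful than the paper on two points: exploiting the affine structure to contract on all of $B_{0,\tau}$ rather than on a ball, and tracking the gluing constants across subintervals explicitly, whereas the paper dismisses the extension as ``standard arguments'' and simply records $\sigma(s)=2C_T$.
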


\begin{proof}
Let $0<\theta\leq T$ and $u \in B_{0,T}$. The proof of the existence follows the steps of  Proposition \ref{r1} and Theorem \ref{teo1}. Therefore, we will omit the details. First, Note that Lemma \ref{lm1} and Lemma \ref{lm2} imply that $Nw:=[a(u)w]_x \in L^{1}(0,\theta;L^2(\R))$, for all $w \in B_{0,\theta}$. Hence,
\begin{align*}\label{e108}
\|Nw\|_{L^1(0,\theta; L^2(\R))}&\leq C\left\lbrace 2^{\frac{1}{2}}\theta^{\frac{1}{4}}\|u\|_{0,\theta}\|w\|_{0,\theta} + 2^{\frac{p+2}{2}}\theta^{\frac{2-p}{4}}\|u\|_{0,\theta}^p\|w\|_{0,\theta}+\theta^{\frac{1}{2}}\|w\|_{0,\theta}\right\rbrace.
\end{align*}
With the notation above, problem (\ref{ee105}) takes the form
\begin{equation*}\label{e109}
\left\lbrace \begin{tabular}{l}
$v_t = A_bv - Nw$ \\
$v(0)=u_0$,
\end{tabular} \right.
\end{equation*}
where $A_bv=\partial_x^2v-\partial_x^3v-bv$. Since $A_b$ generates a strongly continuous semigroup $\{S(t)\}_{t\geq 0}$ of contractions in $L^2(\R)$, by Proposition \ref{prop5}, (\ref{e109}) has a unique mild solution $v \in B_{0,\theta}$, such that
\begin{equation*}\label{e110}
\|v\|_{0,\theta}\leq C_{\theta} \{ \|v_0\|_2+\|Nw\|_{L^1(0,\theta;L^2(\R))}\},
\end{equation*}
where $C_{\theta}=2e^{\theta\|b\|_{\infty}}$. Thus, we can define the operator
\[
\text{$\Gamma:B_{0,T}\longrightarrow B_{0,T}$ given by $\Gamma (w) = v$.}
\]
Let $R>0$ be a constant to be determined later and $w \in B_R(0):=\{w \in B_{0,\theta}: \|w\|_{B_{0,\theta}}\leq R\}$. Thus,
\begin{align*}
\|\Gamma w \|_{0,\theta} &\leq C_T \{\|v_0\|_2+  \left(2^{\frac{1}{2}}C\theta^{\frac{1}{4}}\|u\|_{0,T} + 2^{\frac{p+2}{2}}C\theta^{\frac{2-p}{4}}\|u\|_{0,T}^p+\theta^{\frac{1}{2}}C\right)R \}.
\end{align*}
By choosing $R=2C_T\|v_0\|_2$, we have
\begin{equation*}
\|\Gamma u \|_{0,\theta}\leq \left(K_1+\frac{1}{2}\right)R,
\end{equation*}
where $K_1= C_TC\left(2^{\frac{1}{2}}C\theta^{\frac{1}{4}}\|u\|_{0,T} + 2^{\frac{p+2}{2}}C\theta^{\frac{2-p}{4}}\|u\|_{0,T}^p+\theta^{\frac{1}{2}}\right).$ On the other hand, note that $\Gamma s - \Gamma w$ solves the problem
\begin{equation*}\label{e112}
\left\lbrace \begin{tabular}{l}
$v_t = A_bv - (Ns-Nw)$ \\
$v(0)=0$.
\end{tabular} \right.
\end{equation*}
Thus,
\begin{align*}
\|\Gamma s -\Gamma w \|_{0,\theta}\leq K_1\|s-w\|_{0,\theta}.
\end{align*}
Choosing $\theta>0$, such that $K_1=K_1(\theta)<\frac{1}{2}$, we have
\begin{equation*}
\left\lbrace\begin{tabular}{l}
$\|\Gamma w\|_{B_{0,\theta}}\leq R$ \\
$\|\Gamma s-\Gamma w\|_{B_{0,\theta}}< \frac{1}{2}\|s-w\|_{0,\theta}$
\end{tabular}\right., \qquad \forall s,w \in B_R(0) \subset B_{0,\theta}.
\end{equation*}
Hence, $\Gamma: B_R(0) \longrightarrow B_R(0)$ is a contraction and, by Banach fixed point theorem, we obtain a unique $v\in B_R(0)$, such that $\Gamma (v)=v$. Consequently, $v$ is a unique local mild solution of problem (\ref{ee105}) and
\begin{equation*}\label{e113}
\|v\|_{B_{0,\theta}}\leq 2C_T\|v_0\|_2.
\end{equation*}
Then, using standards arguments we may extend $\theta$ to $T$. Finally, the proof is completed defining $\sigma (s)=2C_T$.
\end{proof}

\noindent The aforementioned result is proved below. We make use of Proposition \ref{prop4} and classical energy-type estimates.

\begin{thm}\label{prop3}
Let $a$ be a function $ C^1(\R)$ satisfying
\begin{gather}\label{e66}
\text{$|a(\mu)| \leq C(1+|\mu|^{p})$ and $|a'(\mu)| \leq C(1+|\mu|^{p-1}), \quad \forall \mu \in  \R$,}
\end{gather}
with $1\leq p<2$. Let $T>0$, $b \in H^{1}(\R)$ and $u_0 \in H^3(\R)$. Then, there exists a unique mild solution $u \in B_{3,T}$ of $(\ref{e1})$, such that
\begin{equation*}\label{e68}
\|u\|_{3,T}\leq \beta_3(\|u_0\|_2)\|u_0\|_{H^3(\R)},
\end{equation*}
where $\beta_3: \R_+\rightarrow \R_+$ is a nondecreasing continuous function.
\end{thm}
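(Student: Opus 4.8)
The plan is to bootstrap the $L^2$-theory of Theorem \ref{teo1} up to the $H^3$ level, rather than run a fresh fixed-point argument in $B_{3,T}$. Given $u_0\in H^3(\R)$, Theorem \ref{teo1} already provides a unique global solution $u\in B_{0,T}$ with $\|u\|_{0,T}\le\beta_0(\|u_0\|_2)\|u_0\|_2$, and the task is only to show that this same $u$ lies in $B_{3,T}$ and to quantify its norm. The first move is to differentiate \eqref{e1} in time. Since $[a(u)u_x]_t=[a(u)u_t]_x$, the function $w:=u_t$ formally solves the \emph{linear} variational problem \eqref{ee105} with coefficient $u\in B_{0,T}$ and initial datum
\[
w(0)=\partial_x^2u_0-\partial_x^3u_0-a(u_0)\partial_xu_0-bu_0 .
\]
This datum lies in $L^2(\R)$: the first two terms are dominated by $\|u_0\|_{H^3}$; $bu_0\in L^2$ because $b\in H^1(\R)\hookrightarrow L^\infty(\R)$; and $a(u_0)\partial_xu_0\in L^2$ because $u_0\in H^1\hookrightarrow L^\infty$ makes $a(u_0)$ bounded via the growth hypothesis \eqref{e66}, while $\partial_xu_0\in L^2$.

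To make the time differentiation rigorous I would work with the difference quotients $\tau_hu:=h^{-1}\big(u(\cdot+h)-u(\cdot)\big)$: using the local Lipschitz continuity of the solution map (Proposition \ref{prop2}) one shows $\{\tau_hu\}$ is Cauchy in $B_{0,T}$, its limit solves \eqref{ee105}, and the limit is identified with $u_t$. Proposition \ref{prop4} then applies directly and yields $u_t\in B_{0,T}$ together with the estimate $\|u_t\|_{0,T}\le\sigma(\|u\|_{0,T})\,\|w(0)\|_2$.

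Next I would read the spatial regularity off the equation in stationary form $Lu=F$, where $L:=\partial_x^3-\partial_x^2+I$ and $F:=u-u_t-a(u)u_x-bu$. Because the symbol of $L$ satisfies $|1+\xi^2-i\xi^3|\sim(1+\xi^2)^{3/2}$, the operator $L$ is an isomorphism $H^{k+3}(\R)\to H^k(\R)$ for every $k\ge0$, and I would use this in three stages. (a) Writing $a(u)u_x=\partial_xA(u)$ with $A(v)=\int_0^v a(s)\,ds$, the growth of $a$ and Gagliardo--Nirenberg give $A(u)\in L^2(0,T;L^2(\R))$ (here $p<2$ is exactly what makes $\int_0^T\|u_x\|_2^p\,dt$ finite), so $a(u)u_x\in L^2(0,T;H^{-1})$; combined with $u,u_t,bu\in L^2(0,T;L^2)$ this gives $F\in L^2(0,T;H^{-1})$ and hence $u\in L^2(0,T;H^2)$. (b) Since now $u\in L^2(0,T;H^2)$ and $u_t\in L^2(0,T;L^2)$, the Lions--Magenes lemma yields $u\in C([0,T];H^1)$; consequently $a(u)u_x\in C([0,T];L^2)$, so $F\in C([0,T];L^2)$ and $u=L^{-1}F\in C([0,T];H^3)$. (c) With $u\in C([0,T];H^3)$ in hand, $\partial_x(a(u)u_x)=a'(u)u_x^2+a(u)u_{xx}$ and $\partial_x(bu)=b_xu+bu_x$ both lie in $L^2(0,T;L^2)$, so $F\in L^2(0,T;H^1)$ and therefore $u=L^{-1}F\in L^2(0,T;H^4)$. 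Together (b) and (c) give $u\in B_{3,T}$.

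Each elliptic estimate carries a universal constant, so chaining them with $\|u_t\|_{0,T}\le\sigma(\|u\|_{0,T})\|w(0)\|_2$ and the a priori bound $\|u\|_{0,T}\le\beta_0(\|u_0\|_2)\|u_0\|_2$ produces a bound of the announced form $\|u\|_{3,T}\le\beta_3(\|u_0\|_2)\|u_0\|_{H^3}$ with $\beta_3$ continuous and nondecreasing, the lower-order nonlinear factors being absorbed through the global $L^2$ control of Theorem \ref{teo1}. I expect the main obstacle to be Step~two: rigorously establishing that the $L^2$-solution is strongly differentiable in $t$ and that $u_t$ genuinely solves \eqref{ee105}, and then closing the bootstrap so that $a(u)u_x$ lands in exactly the right space at each stage --- this is where the $L^\infty$ and $\|u_x\|_2$ control through Gagliardo--Nirenberg must be arranged to neutralize the polynomial growth of $a$ and $a'$. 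The remaining ingredients are routine elliptic regularity and interpolation.
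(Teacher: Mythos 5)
Your proposal is, at its core, the paper's own proof: obtain the global $L^2$ solution from Theorem \ref{teo1}; observe that $v=u_t$ solves the linearized problem \eqref{ee105} with datum $v_0=\partial_x^2u_0-\partial_x^3u_0-a(u_0)\partial_xu_0-bu_0\in L^2(\R)$ bounded by $C(\|u_0\|_2)\|u_0\|_{H^3(\R)}$; apply Proposition \ref{prop4} to get $u_t\in B_{0,T}$; and recover spatial regularity from the stationary form of the equation through the invertibility of $L=\partial_x^3-\partial_x^2+I$ (the paper's Fourier step (\ref{e72})--(\ref{e73}) is exactly your ellipticity of the symbol $1+\xi^2-i\xi^3$), closing the quantitative bound with Gagliardo--Nirenberg and Young, where $1\leq p<2$ is what allows the nonlinear terms to be absorbed. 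Your bootstrap ordering differs in the middle and is in one respect cleaner: you obtain $u\in C([0,T];H^3(\R))$ directly by applying the time-independent isomorphism $L^{-1}$ to $F=u-u_t-a(u)u_x-bu\in C([0,T];L^2(\R))$ (available once $u\in C([0,T];H^1(\R))$ and $u_t\in C([0,T];L^2(\R))$, both supplied by Proposition \ref{prop4}), whereas the paper passes through $u\in L^2(0,T;H^4(\R))$ and invokes the Hilbert-triple lemma of Temam twice. Both routes are correct.

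The one step that would fail as stated is your justification that $u_t$ solves \eqref{ee105}: local Lipschitz continuity of the solution map (Proposition \ref{prop2}) does \emph{not} imply that the difference quotients $\tau_hu$ are Cauchy in $B_{0,T}$. It only yields $\|\tau_hu\|_{0,T-h}\leq C_0\,\|u(h)-u_0\|_2/h$, and a Lipschitz map need not have convergent difference quotients along a curve; moreover, controlling $\|u(h)-u_0\|_2/h$ is essentially the differentiability one is trying to prove, so the argument is circular without further input. The repair must use the equation itself: $d_h:=u(\cdot+h)-u$ satisfies \eqref{ee105} with $a(u)$ replaced by $g_h:=\int_0^1a(u+\theta d_h)\,d\theta$, because $a(u(\cdot+h))\partial_xu(\cdot+h)-a(u)u_x=\partial_x\bigl[A(u(\cdot+h))-A(u)\bigr]=\partial_x\bigl[g_hd_h\bigr]$, where $A(v)=\int_0^va(s)\,ds$; the energy estimates behind Proposition \ref{prop4} are uniform in $h$ for such coefficients, and letting $h\to0$ identifies the limit of $\tau_hu$ with the solution of \eqref{ee105}. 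Alternatively, solve \eqref{ee105} for $v$ by Proposition \ref{prop4} and verify, using uniqueness in Theorem \ref{teo1}, that $u_0+\int_0^tv(s)\,ds$ coincides with $u$, whence $u_t=v$. In fairness, the paper asserts that $u_t$ solves \eqref{ee105} with no justification whatsoever, so your attempt is more scrupulous than the published text; but the specific mechanism you invoke (Lipschitz $\Rightarrow$ convergent difference quotients) is not valid, and it is the only real gap in an otherwise faithful reconstruction of the argument.
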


\begin{proof}
In ordet to make the reading easier, the proof will be done in several steps:
\\
\vspace{1mm}

\noindent \textbf{Step 1: $u \in L^2(0,T; H^3(\R))$}

\vglue 0.1 cm

Since $u_0 \in H^3(\R) \hookrightarrow L^2(\R)$, by Theorem \ref{teo1}, there exist a unique solution $u \in B_{0,T}$, such that
\begin{equation}\label{e85}
\|u\|_{0,T} \leq \beta_0(\|u_0\|_2)\|u_0\|_2.
\end{equation}
We will show that $u \in B_{3,T}$. Let $v=u_t$. Then, $v$ solves the problem
\begin{equation*}\label{e67}
\left\lbrace \begin{tabular}{l}
$v_t+v_{xxx}-v_{xx}+[a(u)v]_x+bv=0$ \\
$v(0,x)=v_0$,
\end{tabular}\right.
\end{equation*}
where $v_0=-\partial_x^3u_0+\partial_x^2u_0-a(u_0)\partial_xu_0-bu_0$. Note that $v  \in L^2(\R)$ and there exist  $C=C(\|u_0\|_2)$, satisfying
\begin{equation*}\label{e69}
\|v_0\|_2 \leq C(\|u_0\|_2)\|u_0\|_{H^3(\R)}.
\end{equation*}
In fact, from (\ref{e6}) we can bound $v_0$ as follows:
\begin{align*}
\|v_0\|_2 &\leq \|\partial_x^3u_0\|_2+\|\partial_x^2u_0\|_2+\|a(u_0)\partial_xu_0\|_2+\|bu_0\|_2 \\
&\leq C_1 \left\lbrace (1+\|b\|_{L^{\infty}(\R)})\|u_0\|_{H^3(\R)}+\|u_0\|^{\frac{p}{2}}_{2}\|\partial_x u_0\|^{\frac{p+2}{2}}_2 \right\rbrace.
\end{align*}
Recall the Gagliardo-Nirenberg inequality:
\begin{equation}\label{e87}
\|\partial^j_xu_0\|_2 \leq C \|\partial^m_xu_0\|^{\frac{j}{m}}_2\|u_0\|_2^{1-\frac{j}{m}}, \quad j\leq m, \quad \text{where $j,m = 0,1,2,3$.}
\end{equation}
Applying (\ref{e87}) with $j=1$ and $m=2$, we have
\begin{align*}
\|v_0\|_2 &\leq C_2 \left\lbrace (1+\|b\|_{L^{\infty}(\R)})\|u_0\|_{H^3(\R)}+\|u_0\|^{\frac{3p+2}{4}}_{2}\|\partial_x^2 u_0\|^{\frac{p+2}{4}}_2\right\rbrace.
\end{align*}
Then, Young inequality guarantees that
\begin{align*}
\|v_0\|_2 &\leq C_3 \left\lbrace (1+\|b\|_{L^{\infty}(\R)})\|u_0\|_{H^3(\R)}+\|u_0\|^{\frac{4p}{2-p}}_{2}\|u_0\|_{2}+\|\partial_x^2 u_0\|_2\right\rbrace \\
\end{align*}
and, consequently, leader to
\begin{equation}\label{e127}
\|v_0\|_2\leq C(\|u_0\|_2)\|u_0\|_{H^3(\R)},
\end{equation}
where $C(s)=C_3 \left\lbrace 2+\|b\|_{L^{\infty}(\R)}+s^{\frac{4p}{2-p}}\right\rbrace$. Using Proposition \ref{prop4}, we see that $v \in B_{0,T}$ and
\begin{equation*}
\|v\|_{0,T} \leq  \sigma (\|u\|_{0,T})\|v_0\|_2.
\end{equation*}
Where $\sigma(s)=2C_T$. Combining (\ref{e85}) and (\ref{e127}), we get
\begin{equation}\label{e86}
\|v\|_{0,T} \leq \sigma (\beta_0(\|u_0\|_2)\|u_0\|_2)C(\|u_0\|_2)\|u_0\|_{H^3(\R)}.
\end{equation}
Then,
\begin{equation}\label{e75}
u, u_t \in L^2(0,T;H^1(\R))
\end{equation}
and, therefore,
\begin{equation}\label{e70}
u \in C([0,T];H^1(\R))  \hookrightarrow  C([0,T];C(\R)).
\end{equation}
On the other hand, note that $a(u)u_x, bu \in L^2(0,T;L^2(\R))$. In fact, from (\ref{e70}) it follows that
\begin{align*}
\|a(u)u_x\|^2_{L^2(0,T;L^2(\R))}&\leq C\left\lbrace\int^T_0\|u_x\|^2_2dx+\int^T_0\||u|^pu_x\|^2_2dx \right\rbrace \\
&\leq C\left\lbrace 1+\|u\|^{2p}_{C(0,T;C(\R))} \right\rbrace \|u\|_{0,T}^2
\end{align*}
and
\begin{align*}
\|bu\|_{L^2(0,T;L^2(\R))}\leq \|b\|_{L^{\infty}(\R)}\|u\|^2_{L^2(0,T;L^2(\R))}.
\end{align*}
Moreover, $u_{xxx}-u_{xx}=-u_t-a(u)u_x-bu$ in $D'(0,T,\R)$. Hence,
\begin{equation*}\label{e71}
u_{xxx}-u_{xx} = f \quad \in \quad L^2(0,T;L^2(\R), \quad \text{where $f:= -u_t-a(u)u_x-bu.$}
\end{equation*}
Taking Fourier transform, we have
\begin{equation}\label{e72}
\widehat{u}= \frac{\widehat{f}+\widehat{u}}{[1+\xi^2-i\xi^3]}
\end{equation}
and,
\begin{align}\label{e73}
\|u(t)\|^2_{H^3(\R)} \leq C_3 \left\lbrace \|f(t)\|_2^2+\|u(t)\|_2^2\right\rbrace
\end{align}
where $C_3=2\sup_{\xi \in \R}\dfrac{(1+\xi^2)^3}{(1+\xi^2)^2+\xi^6}$. Integrating (\ref{e73}) over $[0,T]$, we deduce that
\begin{equation}\label{e74}
u  \in L^2(0,T;H^3(\R)).
\end{equation}
\textbf{Step 2: $u \in B_{3,T}$}
\vglue 0.1 cm

First, observe that, according to (\ref{e75}), $u_t \in L^2(0,T;H^{-3}(\R))$. Then, considering the Hilbert triple $H^{3}(\R)\hookrightarrow H^{2}(\R)\hookrightarrow H^{-3}(\R)$,
by Lemma 1.2 in \cite[Chapter III]{temam1984theory}, we have
\begin{equation*}\label{e77}
u  \in C([0,T];H^2(\R)).
\end{equation*}
This implies further
\begin{equation}\label{e78}
u_{xx}, bu \in C([0,T];L^2(\R))\cap L^2(0,T;H^1(\R)).
\end{equation}
On the other hand, note that
\begin{align*}
\|a(u(t))u_x(t)-a(u(t_0))u_x(t_0)\|_{2} &\leq  \|[a(u(t))-a(u(t_0))]u_x(t)\|_{2}+\|a(u(t_0))[u_x(t)-u_x(t_0)]\|_{2} \\
&\leq C\left\lbrace \|(1+|u(t)|^{p-1}+|u(t_0)|^{p-1})|u(t)-u(t_0)|u_x(t)\|_2 \right.\\
&\left.+\|(1+|u(t_0)|^{p})|u_x(t)-u_x(t_0)|\|_2\right\rbrace\\
&\leq C\left\lbrace (1+\|u(t)\|^{p-1}_{\infty}+\|u(t_0)\|^{p-1}_{\infty})\|u(t)-u(t_0)\|_{\infty}\|u_x(t)\|_2 \right.\\
&\left.+(1+\|u(t_0)\|^{p}_{\infty})\|u_x(t)-u_x(t_0)\|_2\right\rbrace.\\
\end{align*}
Then, by (\ref{e70}) we have
\[
\lim_{t\rightarrow t_0}\|a(u(t))u_x(t)-a(u(t_0))u_x(t_0)\|_{2}=0
\]
and, therefore $a(u)u_x \in C([0,T];L^2(\R))$. The results above also guarantee that
\begin{equation}\label{e82}
a(u)u_x \in C([0,T];L^2(\R)) \cap  L^2(0,T;H^1(\R)).
\end{equation}
Indeed, it is sufficient to combine (\ref{e70}), (\ref{e74}) and the estimates
\begin{align*}
\|a'(u)u_x^2\|_{L^2(0,T;L^2(\R))}\leq C\left\lbrace (1+\|u\|_{C([0,T];C(\R))}^{p-1})\|u_x\|_{C([0,T];C(\R))}\|u_x\|_{L^2([0,T];L^2(\R))}\right\rbrace
\end{align*}
and
\begin{align*}
\|a(u)u_{xx}\|_{L^2(0,T;L^2(\R))}&\leq C \left\lbrace (1+\|u\|_{C([0,T];C(\R))}^{p})\|u_{xx}\|_{L^2([0,T];L^2(\R))}\right\rbrace.
\end{align*}
Since
\begin{align*}
u_{xxx}=-u_t+u_{xx}-a(u)u_x-bu,
\end{align*}
from (\ref{e75}), (\ref{e78}) and (\ref{e82}), we obtain
\begin{equation}\label{e83}
u \in  L^2(0,T;H^4(\R)).
\end{equation}
Finally, considering the Hilbert triple $H^{4}(\R)\hookrightarrow H^{3}(\R)\hookrightarrow H^{-4}(\R)$, 
Lemma 1.2  in \cite[Chapter III]{temam1984theory} gives
\begin{equation}\label{e84}
u \in  C([0,T];H^3(\R))
\end{equation}
and (\ref{e83})-(\ref{e84}) imply that $u \in B_{3,T}$.

\noindent \textbf{Step 3: $\|u\|_{C([0,T];H^3(\R))} \leq \sigma_1(\|u_0\|_2)\|u_0\|_{H^3(\R)}$}
\vglue 0.1 cm

First, note that, according to (\ref{e73}), the following estimate holds:
\begin{align}\label{e88}
\|u(t)\|_{H^3(\R)} \leq C_4\left\lbrace \|u_t(t)\|_2+\|a(u(t))u_x(t)\|_2+\|bu(t)\|_2+\|u(t)\|_2 \right\rbrace.
\end{align}
Next we combine (\ref{e66}), (\ref{e6})  and (\ref{e87}) with $j=1$ and $m=2$, to obtain
\begin{align*}
\|a(u(t))u_x(t)\|_2 &\leq C\left\lbrace\|u_x(t)\|_2+\|u(t)\|_2^{\frac{p}{2}}\|u_x(t)\|_2^{\frac{p+2}{2}}\right\rbrace \\
 &\leq C\left\lbrace \|u_{xx}(t)\|_2^{\frac{1}{2}}\|u(t)\|_2^{\frac{1}{2}}+\|u(t)\|_2^{\frac{3p+2}{4}}\|u_{xx}(t)\|_2^{\frac{p+2}{4}}\right\rbrace.
\end{align*}
Moreover,  Young inequality gives
\begin{align*}
\|a(u(t))u_x(t)\|_2 &\leq C_5\left(\|u(t)\|_2+\|u(t)\|_2^{\frac{3p+2}{2-p}}\right) +\frac{1}{2C_4}\|u(t)\|_{H^3(\R)}
\end{align*}
Replacing the estimate above in (\ref{e88}) and taking the supremum  in $[0,T]$, we get
\begin{align*}
\|u\|_{C([0,T];H^3(\R))} \leq 2C_4\left\lbrace \|u_t\|_{0,T}+(C_6+\|b\|_{\infty}) \|u\|_{0,T}+C_5\|u\|_{0,T}^{\frac{3p+2}{2-p}} \right\rbrace.
\end{align*}
Then, using (\ref{e85}) and (\ref{e86}) it follows that
\begin{align}\label{e89}
\|u\|_{C([0,T];H^3(\R))} &\leq 2C_4\left\lbrace \sigma (\beta_0(\|u_0\|_2)\|u_0\|_2)C(\|u_0\|_2)\|u_0\|_{H^3(\R)}+(C_6+\|b\|_{\infty})\beta_0(\|u_0\|_2)\|u_0\|_{2} \right. \notag \\
&\left.+C_5\beta_0^{\frac{3p+2}{2-p}}(\|u_0\|_2)\|u_0\|_{2}^{\frac{4p}{2-p}}\|u_0\|_{2} \right\rbrace \notag\\
&= \sigma_1(\|u_0\|_2)\|u_0\|_{H^3(\R)},
\end{align}
where $\sigma_1(s)=2C_4\left\lbrace \sigma (\beta_0(s)s)C(s)+(C_6+\|b\|_{\infty})\beta_0(s)+C_5\beta_0^{\frac{3p+2}{2-p}}(s)s^{\frac{4p}{2-p}}\right\rbrace.$
\\
\vspace{1mm}

\noindent \textbf{Step 4: $\|u_{xxxx}\|_{L^2(0,T;L^2(\R))} \leq \sigma_5(\|u_0\|_2)\|u_0\|_{H^3(\R)}$}
\vglue 0.1 cm

In order to conclude the proof it remains to prove that $u \in L^2(0,T;H^4(\R))$. To do that, we differentiate the equation with respect to $x$ to obtain
\begin{align}\label{e90'}
\|u_{xxxx}\|_{L^2(0;T,L^2(\R))} &\leq \|v\|_{0,T}+T^{\frac{1}{2}}\|u\|_{C(0;T,H^3(\R))}+\|a'(u)u_x^2\|_{L^2(0;T,L^2(\R))}+\|a(u)u_{xx}\|_{L^2(0;T,L^2(\R))} \notag  \\
&+\|[bu]_{x}\|_{L^2(0;T,L^2(\R))}.
\end{align}
The next steps are denoted to estimate the terms on the right side of (\ref{e90'}). First, observe that
\begin{align*}
\|[bu]_{x}\|_{L^2(0;T,L^2(\R))} &\leq \|b\|_{H^1(\R)}\|u\|_{L^2(0;T,H^{1}(\R))}+\|b\|_{H^{1}(\R)}\|u_{x}\|_{L^2(0;T,L^2(\R))} \\
&\leq 2\|b\|_{H^1(\R)}\|u\|_{0,T}.
\end{align*}
Then, from (\ref{e85}), (\ref{e86}) and (\ref{e89}), we obtain
\begin{equation}\label{e90}
\|u_{xxxx}\|_{L^2(0;T,L^2(\R))} \leq \sigma_2(\|u_0\|_2)\|u_0\|_{H^3(\R)}+\|a'(u)u_x^2\|_{L^2(0;T,L^2(\R))}+\|a(u)u_{xx}\|_{L^2(0;T,L^2(\R))}
\end{equation}
where $\sigma_2(s)=\sigma (\beta_0(s)s)C(s)+T^{\frac{1}{2}}\sigma_1(s)+2\|b\|_{H^1(\R)}\beta_0(s)$. Moreover, using (\ref{e6}) it follows that
\begin{align*}
\|a'(u(t))u_x^2(t)\|_2 &\leq C\left\lbrace \|u_x^2(t)\|_2+\||u(t)|^{p-1}u_x^2(t)\|_2 \right\rbrace \\
&\leq C_7\left\lbrace \|u_x(t)\|_2^{\frac{3}{2}}\|u_{xx}(t)\|_2^{\frac{1}{2}}+\|u(t)\|_{2}^{\frac{p-1}{2}}\|u_x(t)\|_2^{\frac{p+2}{2}}\|u_{xx}(t)\|_2^{\frac{1}{2}} \right\rbrace  \\
&\leq C_7\left\lbrace \|u_x(t)\|_2\|u(t)\|_{H^3(\R)}+\|u(t)\|_{2}^{\frac{p-1}{2}}\|u_x(t)\|_2^{\frac{p+2}{2}}\|u_{xx}(t)\|_2^{\frac{1}{2}} \right\rbrace.
\end{align*}
Then,  Gagliardo-Nirenberg inequality (\ref{e87}) with $j=1$ and $m=3$ leads to
\begin{align*}
\|a'(u(t))u_x^2(t)\|_2 &\leq C_8\left\lbrace \|u_x(t)\|_2\|u(t)\|_{H^3(\R)}+\|u(t)\|_{2}^{\frac{5p+2}{6}}\|u_{xxx}(t)\|_2^{\frac{p+4}{6}}  \right\rbrace.
\end{align*}
Moreover, Young inequality gives
\begin{align*}
\|a'(u(t))u_x^2(t)\|_2^2  \leq  C_9\left\lbrace \|u_x(t)\|_2\|u(t)\|_{H^3(\R)}+\|u(t)\|_{2}^{\frac{5p+2}{2-p}}+\|u_{xxx}(t)\|_2  \right\rbrace,
\end{align*}
which allows us to conclude that
\begin{align*}
\|a'(u)u_x^2\|_{L^2(0,T;L^2(\R))} \leq C_{10}\left\lbrace \|u\|_{C([0,T];H^3(\R))} \|u\|_{0,T}+T^{\frac{1}{2}}\|u\|_{0,T}^{\frac{(5p+2)}{2-p}}+T^{\frac{1}{2}}\|u\|_{C([0,T];H^3(\R))}  \right\rbrace.
\end{align*}
Hence
\begin{equation}\label{e91}
\|a'(u)u_x^2\|_{L^2(0,T;L^2(\R))} \leq \sigma_3(\|u_0\|_2)\|u_0\|_{H^3(\R)}
\end{equation}
whit $\sigma_3(s)=C_{10}\left\lbrace\sigma_1(s)\beta_0(s)s+T^{\frac{1}{2}}\beta_0^{\frac{(5p+2)}{2-p}}(s)s^{\frac{(5p+2)}{2-p}-1}+T^{\frac{1}{2}}\sigma_1(s)\right\rbrace$. On the other hand, (\ref{e6}) yields
\begin{align*}
\|a(u(t))u_{xx}(t)\|_2  &\leq C_{11} \left\lbrace \|u(t)\|_{H^3(\R)}+\|u(t)\|_2^{\frac{p}{2}}\|u_{x}(t)\|_2^{\frac{p}{2}}\|u_{xx}(t)\|_2   \right\rbrace \\
&\leq C_{11} \left\lbrace \|u\|_{C([0,T];H^3(\R))}+\|u\|_{0,T}^{\frac{p}{2}}\|u\|_{C([0,T];H^3(\R))}\|u_{x}(t)\|_2^{\frac{p}{2}}   \right\rbrace.
\end{align*}
It therefore transpire that
\begin{align*}
\|a(u)u_{xx}\|_{L^2(0,T;L^2(\R))}  &\leq C_{12} \left\lbrace T^{\frac{1}{2}} \|u\|_{C([0,T];H^3(\R))}+\|u\|_{0,T}^{\frac{p}{2}}\|u\|_{C([0,T];H^3(\R))}\left(\int_0^T\|u_{x}(t)\|_2^p\right)^{\frac{1}{2}}   \right\rbrace \\
&\leq C_{12} \left\lbrace T^{\frac{1}{2}} \|u\|_{C([0,T];H^3(\R))}+T^{\frac{2-p}{4}}  \|u\|_{0,T}^{p}\|u\|_{C([0,T];H^3(\R))}\right\rbrace.
\end{align*}
from which one obtains the inequality
\begin{align}\label{e92}
\|a(u)u_{xx}\|_{L^2(0,T;L^2(\R))} &\leq \sigma_4(\|u_0\|_2)\|u_0\|_{H^3(\R)},
\end{align}
whit $\sigma_4(s)=C_{12}\left\lbrace T^{\frac{1}{2}}\sigma_1(s)+T^{\frac{2-p}{4}}\beta_0^p(s)\sigma_1(s)s^p \right\rbrace$. Consequently, (\ref{e90}), (\ref{e91}) and (\ref{e92}) lead to
\begin{equation}\label{e93}
\|u_{xxxx}\|_{L^2(0;T,L^2(\R))} \leq \sigma_5(\|u_0\|_2)\|u_0\|_{H^3(\R)},
\end{equation}
where $\sigma_5(s)=\sigma_2(s)+\sigma_3(s)+\sigma_4(s)$. Finally, using (\ref{e89}) and (\ref{e93}), we conclude that $u \in L^(0,T;H^4(\R))$ and
\begin{align*}
\|u\|_{3,T} \leq \beta_3(\|u_0\|_2)\|u_0\|_{H^3(\R)},
\end{align*}
where  $\beta_3(s)=\sigma_1(s)+\sigma_5(s)$.
\end{proof}

Next, we will show the well-posedness of the IVP (\ref{e1}) in the space $H^s(\R)$ for $0\leq s \leq 3$ and $1\leq p <2$. In order to do that, we will used a method introduced by Tartar \cite{tartar1972interpolation} and adapted by Bona and Scott \cite[Theorem 4.3]{bona1976solutions} to prove the global well-posedness of the pure initial value problem for the KdV equation on the whole line in fractional order Sobolev spaces $H^s(\R)$.

Let $B_0$ and $B_1$  be two Banach spaces, such that $B_1\subset B_0$ with the inclusion map being continuous. Let $f \in B_0$, for $t \geq 0$ and define
\begin{align*}
K(f,t)=\inf_{g \in B_1}\left\lbrace \|f-g\|_{B_0}+t\|g\|_{B_1}\right\rbrace.
\end{align*}
For $0<\theta <1$ and $1\leq p \leq +\infty$, define
\begin{align*}
\B_{\theta,p}:=[B_0,B_1]_{\theta,p}=\left\lbrace f\in B_0: \|f\|_{\theta,p}:\left(\int_{0}^{\infty}K(f,t)t^{-\theta p - 1}dt\right)^{\frac{1}{p}}< \infty \right\rbrace
\end{align*}
with the usual modification for the case $p=\infty$. Then, $B_{\theta,p}$ is a Banach space with norm $\|\cdot\|_{\theta,p}$. Given two pairs $(\theta_1,p_1)$ and $(\theta_2,p_2)$ as above,  $(\theta_1,p_1)\prec(\theta_2,p_2)$ will denote
\begin{equation*}
\left\lbrace \begin{tabular}{l l}
$\theta_1 < \theta_2$ & or \\
$\theta_1 = \theta_3$ &  and $p_1>p_2$.
\end{tabular}\right.
\end{equation*}
If $(\theta_1,p_1)\prec(\theta_2,p_2)$, then $\B_{\theta_2,p_2}\subset \B_{\theta_1,p_1}$ with the inclusion map continuous.

Then, the following result holds:

\begin{thm}\label{inter}
Let $B_{0}^j$ and $B_{1}^j$ be Banach spaces such that $B_{1}^j\subset B_{0}^j$ with continuous inclusion mappings, for $j=1,2$. Let $\alpha$ and $q$ lie in the ranges $0 < \alpha < 1$ and $1\leq q\leq \infty$. Suppose that $\A$ is a mapping satisfying
\begin{enumerate}
\item[(i)] $\A: \B_{\alpha, q}^{1} \rightarrow B_0^2$ and, for $f,g \in  \B_{\alpha, q}^{1}$,
\begin{equation*}
\|\A f - \A g\|_{B_0^2}\leq C_0\left(\|f\|_{\B_{\alpha, q}^{1}}+\|g\|_{\B_{\alpha, q}^{1}} \right)\|f-g\|_{B_0^1},
\end{equation*}
\item[(ii)] $\A: B_1^1 \rightarrow B_1^2$ and, for $h\in  \B_1^{1}$,
\begin{equation*}
\|\A h\|_{B_1^2}\leq C_1\left(\|h\|_{\B_{\alpha, q}^{1}} \right)\|h\|_{B_1^1},
\end{equation*}
\end{enumerate}
where $C_j : \R^+ \rightarrow \R^+$ are continuous nondecreasing functions, for $j=0,1$. Then, if $(\theta,p)\geq(\alpha,q)$, $\A$ maps $\B_{\theta, p}^{1}$ into $\B_{\theta, p}^{2}$ and, for $f \in \B_{\theta, p}^{1}$, we have
\begin{equation*}
\|\A f\|_{\B_{\theta, p}^2}\leq C\left(\|f\|_{\B_{\alpha, q}^{1}} \right)\|f\|_{\B_{\theta,p}^1},
\end{equation*}
where $C(r)=4C_0(4r)^{1-\theta}C_1(3r)^{\theta}$, $r>0$.
\end{thm}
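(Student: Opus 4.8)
The plan is to run Tartar's nonlinear real-interpolation argument, exploiting that each target space is produced by one and the same $K$-functional recipe applied to the pair $(B_0^j,B_1^j)$. Write $K_j(\cdot,t)$ for the $K$-functional of $(B_0^j,B_1^j)$, $j=1,2$. Fix $f\in\B_{\theta,p}^1$; since $(\theta,p)\geq(\alpha,q)$, the continuous embedding $\B_{\theta,p}^1\subset\B_{\alpha,q}^1$ recalled just before the statement gives $f\in\B_{\alpha,q}^1$, so $\A f$ is defined and $r:=\|f\|_{\B_{\alpha,q}^1}<\infty$. For each $t>0$ I would fix a near-optimal splitting $f=a_t+b_t$, with $a_t\in B_0^1$, $b_t\in B_1^1$ and $\|a_t\|_{B_0^1}+t\,\|b_t\|_{B_1^1}\leq 2K_1(f,t)$.

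First I would decompose $\A f=(\A f-\A b_t)+\A b_t$ and read the two pieces through the hypotheses: since $b_t\in B_1^1\subset\B_{\alpha,q}^1$, (ii) gives $\A b_t\in B_1^2$ with $\|\A b_t\|_{B_1^2}\leq C_1(\|b_t\|_{\B_{\alpha,q}^1})\|b_t\|_{B_1^1}$, while (i) gives $\A f-\A b_t\in B_0^2$ with $\|\A f-\A b_t\|_{B_0^2}\leq C_0(\|f\|_{\B_{\alpha,q}^1}+\|b_t\|_{\B_{\alpha,q}^1})\|a_t\|_{B_0^1}$. Granting the $t$-uniform bound $\|b_t\|_{\B_{\alpha,q}^1}\leq 3r$ established below and using monotonicity of $C_0,C_1$, these become $\|\A f-\A b_t\|_{B_0^2}\leq 2C_0(4r)K_1(f,t)$ and $t\|\A b_t\|_{B_1^2}\leq 2C_1(3r)K_1(f,t)$. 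Taking $g=\A b_t$ as competitor in the target functional evaluated at the rescaled argument $\lambda t$, with $\lambda:=C_0(4r)/C_1(3r)$, yields
\[
K_2(\A f,\lambda t)\leq \|\A f-\A b_t\|_{B_0^2}+\lambda t\,\|\A b_t\|_{B_1^2}\leq 4C_0(4r)K_1(f,t).
\]
Substituting $\tau=\lambda t$ in $\|\A f\|_{\B_{\theta,p}^2}^p=\int_0^\infty K_2(\A f,\tau)^p\tau^{-\theta p-1}\,d\tau$ pulls out the factor $\lambda^{-\theta p}$ and collapses the bound to $\|\A f\|_{\B_{\theta,p}^2}\leq 4\lambda^{-\theta}C_0(4r)\|f\|_{\B_{\theta,p}^1}=4C_0(4r)^{1-\theta}C_1(3r)^{\theta}\|f\|_{\B_{\theta,p}^1}$, which is exactly $C(r)\|f\|_{\B_{\theta,p}^1}$. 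The case $p=\infty$ is identical with a supremum replacing the integral, and the degenerate cases $C_0(4r)=0$ or $C_1(3r)=0$ are read off directly from the same two inequalities.

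The step I expect to be the crux is the $t$-uniform bound $\|b_t\|_{\B_{\alpha,q}^1}\leq 3r$, since everything else is formal once it is in hand. I would derive it from the subadditivity $K_1(b_t,s)\leq K_1(f,s)+K_1(a_t,s)$ combined with two complementary estimates for $K_1(a_t,s)$: the trivial $K_1(a_t,s)\leq\|a_t\|_{B_0^1}\leq 2K_1(f,t)$, used on $s\geq t$, and $K_1(a_t,s)\leq K_1(f,s)+s\|b_t\|_{B_1^1}\leq K_1(f,s)+(2s/t)K_1(f,t)$, used on $s\leq t$. Inserting these into $\int_0^\infty K_1(a_t,s)^q s^{-\alpha q-1}\,ds$ and splitting at $s=t$, the only genuinely $t$-dependent quantity that appears is $t^{-\alpha}K_1(f,t)$, which is controlled by $r$ through the elementary inequality $t^{-\alpha}K_1(f,t)\leq(\alpha q)^{1/q}r$ (obtained by integrating the nondecreasing function $K_1(f,\cdot)$ against $s^{-\alpha q-1}$ over $[t,\infty)$). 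This bounds $\|a_t\|_{\B_{\alpha,q}^1}$, hence $\|b_t\|_{\B_{\alpha,q}^1}\leq\|f\|_{\B_{\alpha,q}^1}+\|a_t\|_{\B_{\alpha,q}^1}$, by a fixed multiple of $r$ independent of $t$; tracking the numerical constants (which depend on the normalization of the interpolation norm and on the near-optimality factor in the splitting) produces the value $3r$ recorded in the theorem, and then $\|f\|_{\B_{\alpha,q}^1}+\|b_t\|_{\B_{\alpha,q}^1}\leq 4r$ supplies the argument of $C_0$. Since only pointwise-in-$t$ inequalities between explicit $K$-functionals are used, no measurability of $t\mapsto b_t$ needs to be established.
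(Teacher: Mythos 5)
Your skeleton is the right one, and it is in fact the argument behind the result the paper invokes (the paper gives no proof of Theorem \ref{inter}; it just cites Bona--Scott, Theorem 4.3, which is Tartar's scheme set up exactly as you set it up: near-optimal splitting, hypotheses (i)/(ii) applied to the two pieces, the rescaling $\tau=\lambda t$ with $\lambda=C_0(4r)/C_1(3r)$, change of variables in the $K$-functional integral, no measurability in $t$ required). The genuine gap is at the step you yourself flag as the crux: the uniform bound $\|b_t\|_{\B_{\alpha,q}^1}\leq 3r$ does \emph{not} follow from your integral-splitting argument. Carrying out your estimates: the region $s\geq t$ contributes $2t^{-\alpha}K_1(f,t)(\alpha q)^{-1/q}\leq 2r$; the region $s\leq t$ contributes $r+2t^{-\alpha}K_1(f,t)\bigl((1-\alpha)q\bigr)^{-1/q}\leq r+2\bigl(\alpha/(1-\alpha)\bigr)^{1/q}r$; adding $\|f\|_{\B_{\alpha,q}^1}$ to pass from $a_t$ to $b_t$ leaves $\|b_t\|_{\B_{\alpha,q}^1}\leq\bigl(4+2(\alpha/(1-\alpha))^{1/q}\bigr)r$. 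This is never $\leq 3r$, it depends on $(\alpha,q)$, and it blows up as $\alpha\to 1^-$ (relevant here, since $\alpha=s/3$ may be close to $1$). The loss is intrinsic to the route, not to bookkeeping: on $s\leq t$ you must pair the weight $\int_0^t s^{(1-\alpha)q-1}ds=t^{(1-\alpha)q}/((1-\alpha)q)$ against $t^{-\alpha}K_1(f,t)\leq(\alpha q)^{1/q}r$, and the ratio $(\alpha/(1-\alpha))^{1/q}$ survives any choice of near-optimality factor. So as written you prove the theorem only with $C(r)$ replaced by $4C_0(c_1 r)^{1-\theta}C_1(c_2 r)^{\theta}$ for $(\alpha,q)$-dependent constants $c_1,c_2$ --- which would still suffice for the application in Theorem \ref{teo2}, where any continuous nondecreasing $C$ does the job, but it is not the statement with its explicit constant.

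The repair is to make the crux estimate pointwise in $s$ rather than after integration. For $s\geq t$: $K_1(b_t,s)\leq K_1(f,s)+\|f-b_t\|_{B_0^1}\leq K_1(f,s)+2K_1(f,t)\leq 3K_1(f,s)$, using that $K_1(f,\cdot)$ is nondecreasing. For $s\leq t$: $K_1(b_t,s)\leq s\|b_t\|_{B_1^1}\leq(2s/t)K_1(f,t)\leq 2K_1(f,s)$, using that $K_1(f,\cdot)$ is an infimum of affine functions of $s$ with nonnegative intercepts, so $K_1(f,s)/s$ is nonincreasing. Hence $K_1(b_t,s)\leq 3K_1(f,s)$ for every $s>0$, and applying the $\B_{\alpha,q}^1$-norm gives $\|b_t\|_{\B_{\alpha,q}^1}\leq 3r$ uniformly in $t$, for every admissible $(\alpha,q)$ including $q=\infty$. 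With this single change the rest of your proof goes through verbatim and yields exactly $C(r)=4C_0(4r)^{1-\theta}C_1(3r)^{\theta}$.
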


\begin{proof}
See \cite[Theorem 4.3]{bona1976solutions}
\end{proof}

This theorem leads to the main result of this section.

\begin{thm}\label{teo2}
Let $a$ be a $C^1(\R)$ function satisfying
\begin{equation*}
|a(\mu)|\leq C(1+|\mu|^p), \quad |a'(\mu)|\leq C(1+|\mu|^{p-1}), \qquad \forall \mu \in \R,
\end{equation*}
with $1 \leq p < 2$, and let $T>0$ and $0 \leq s\leq 3$ be given. In addition, assume that $b \in L^{\infty}(\R)$ when $s=0$ and $b\in H^1(\R)$ when $s>0$. Then, for any $u_0 \in H^s(\R)$, the IVP (\ref{e1}) admits a unique solution $u \in B_{s,T}$. Moreover, there exists a nondecreasing continuous function $\beta_s: \R^+ \rightarrow \R^+$, such that
\begin{equation*}
\|u\|_{B_{s,T}} \leq \beta_s(\|u_0\|_2)\|u_0\|_{H^s(\R)}.
\end{equation*}
\end{thm}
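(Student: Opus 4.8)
The plan is to apply the abstract interpolation result (Theorem~\ref{inter}) with the two endpoint cases already established in this subsection. I would take $B_0^1 = B_0^2 = L^2(\R)$ as the ``low'' endpoint and $B_1^1 = B_1^2 = H^3(\R)$ as the ``high'' endpoint, interpreting the latter as initial data spaces, while the solution operator $\A$ is the map $u_0 \mapsto u$ sending data to the corresponding solution in $B_{s,T}$ for the appropriate $s$. The key observation is that the real interpolation space $[L^2(\R),H^3(\R)]_{\theta,2}$ coincides (up to equivalent norms) with $H^s(\R)$ when $s=3\theta$; since $s$ ranges over $[0,3]$, choosing $\theta = s/3$ and $p=q=2$ recovers exactly the target spaces. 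Thus verifying the two hypotheses of Theorem~\ref{inter} at the endpoints $\alpha = 0$ (or a small $\alpha>0$, see below) and interpolating will yield existence in $B_{s,T}$ together with the stated bound.

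\textbf{Verifying the hypotheses.} Hypothesis (i) is precisely the Lipschitz continuity of the solution map on $L^2$ established in Proposition~\ref{prop2}: for $u_0,v_0\in L^2(\R)$,
\begin{equation*}
\|\A u_0 - \A v_0\|_{0,T} \leq C_0(\|u_0\|_2+\|v_0\|_2,T)\,\|u_0-v_0\|_2,
\end{equation*}
which gives the required estimate with $B_0^2 = B_{0,T}$ and $C_0$ nondecreasing in the first argument. Hypothesis (ii) is the $H^3$-regularity bound from Theorem~\ref{prop3}, namely
\begin{equation*}
\|\A u_0\|_{3,T} \leq \beta_3(\|u_0\|_2)\,\|u_0\|_{H^3(\R)},
\end{equation*}
with $B_1^2 = B_{3,T}$ and $C_1 = \beta_3$ nondecreasing. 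Here the target spaces are the solution spaces $B_{0,T}$ and $B_{3,T}$, so I would need the companion fact that $[B_{0,T},B_{3,T}]_{\theta,2} = B_{s,T}$ for $s=3\theta$; this follows from interpolating the two defining components $C([0,T];H^s)$ and $L^2(0,T;H^{s+1})$ separately and using that interpolation commutes with intersections of Banach spaces.

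\textbf{Main obstacle and remaining steps.} The main subtlety is the bookkeeping on the interpolation parameters and the identification of intermediate spaces. Theorem~\ref{inter} requires the ``low'' endpoint to be of the form $\B^1_{\alpha,q}$ with $0<\alpha<1$, not a raw Banach space, and the Lipschitz constant $C_0$ to depend on the $\B^1_{\alpha,q}$-norm rather than the $L^2$-norm. Since Proposition~\ref{prop2} controls everything by the $L^2$-norm of the data and $\B^1_{\alpha,q}=H^{3\alpha}(\R)\hookrightarrow L^2(\R)$ continuously, the hypotheses hold for any fixed small $\alpha\in(0,1)$ with $q=2$, so I would fix such an $\alpha$ (e.g.\ below the desired $\theta$) and apply the theorem for all $(\theta,p)=(s/3,2)\succeq(\alpha,2)$. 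Once the hypotheses are checked, Theorem~\ref{inter} directly delivers $\A\colon H^s(\R)\to B_{s,T}$ with
\begin{equation*}
\|\A u_0\|_{B_{s,T}} \leq C(\|u_0\|_2)\,\|u_0\|_{H^s(\R)},
\end{equation*}
where $C(r)=4C_0(4r)^{1-\theta}C_1(3r)^{\theta}$ is nondecreasing and continuous, giving the claimed $\beta_s$. Uniqueness is inherited from the endpoint cases: any $H^s$ solution is in particular an $L^2$ solution, and Proposition~\ref{prop2} (via its Lipschitz estimate with zero difference of data) forces coincidence of solutions, so no separate uniqueness argument is needed. The only genuine work beyond citation is the identification $[B_{0,T},B_{3,T}]_{\theta,2}=B_{s,T}$ and the normalization of the $\alpha$-endpoint, both of which are standard real-interpolation facts.
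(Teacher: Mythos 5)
Your proposal is correct and takes essentially the same route as the paper's own proof, which likewise applies Theorem~\ref{inter} with $B_0^1=L^2(\R)$, $B_0^2=B_{0,T}$, $B_1^1=H^3(\R)$, $B_1^2=B_{3,T}$, verifies hypotheses (i) and (ii) by citing Proposition~\ref{prop2} and Theorem~\ref{prop3}, and uses the identifications $[L^2(\R),H^3(\R)]_{\frac{s}{3},2}=H^s(\R)$ and $[B_{0,T},B_{3,T}]_{\frac{s}{3},2}=B_{s,T}$. Your extra remarks on normalizing the $\alpha$-endpoint, on the (one-sided) interpolation of intersections, and on uniqueness being inherited from the $L^2$ theory merely make explicit details that the paper leaves implicit.
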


\begin{proof}
We define
\begin{equation*}
B_0^1=L^2(\R),\quad B_0^2=B_{0,T},\quad B_1^1=H^3(\R)\quad \text{and}\quad  B_1^2=B_{3,T}.
\end{equation*}
Thus,
\begin{equation*}
\B_{\frac{s}{3},2}^1=[L^2(\R),H^3(\R)]_{\frac{s}{3},2}=H^s(\R) \quad \text{and}\quad  \B_{\frac{s}{3},2}^2=[B_{0,T},B_{3,T}]_{\frac{s}{3},2}=B_{s,T}.
\end{equation*}
Combining Proposition \ref{prop2} and Theorem \ref{prop3} we obtain $(i)$ and $(ii)$ in the Theorem \ref{inter}. Then, Theorem \ref{inter} yields the result.
\end{proof}

Theorem \ref{teo2} gives a strong smoothing property for the solutions of the problem.

\begin{cor}\label{cor1}
Under the assumptions of Theorem \ref{teo2}, for any $u_0 \in L^2(\R)$, the corresponding solution $u$ of (\ref{e1}) belongs to
\begin{equation*}
B_{3,[\varepsilon,T]}=C([\varepsilon, T];H^3(\R))\cap L^2(\varepsilon, T;H^4(\R)),
\end{equation*}
for every $T>0$ and $0 <\varepsilon<T$.
\end{cor}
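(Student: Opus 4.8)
The plan is a bootstrap argument that exploits the regularizing effect already encoded in Theorem~\ref{teo2}: membership of the solution in $L^2(0,T;H^{s+1}(\R))$ forces $u(t)\in H^{s+1}(\R)$ for almost every $t$, and restarting the flow from such an interior time with the higher-regularity well-posedness theory raises the spatial regularity by one derivative. Iterating this three times carries the merely $L^2$ datum up to $H^3$ on any interval $[\varepsilon,T]$.

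Concretely, I would first invoke Theorem~\ref{teo2} with $s=0$ to produce, from $u_0\in L^2(\R)$, a global solution $u\in B_{0,T}=C([0,T];L^2(\R))\cap L^2(0,T;H^1(\R))$. Since $u\in L^2(0,T;H^1(\R))$, the set of times at which $u(t)\in H^1(\R)$ has full measure in $(0,T)$, so I may pick $t_1\in(0,\varepsilon/3)$ with $u(t_1)\in H^1(\R)$. Applying Theorem~\ref{teo2} with $s=1$ on $[t_1,T]$ to the datum $u(t_1)$ yields a solution lying in $B_{1,[t_1,T]}=C([t_1,T];H^1(\R))\cap L^2(t_1,T;H^2(\R))$; by uniqueness of mild solutions in $B_{0,[t_1,T]}$ this coincides with the restriction of $u$, so in particular $u\in L^2(t_1,T;H^2(\R))$. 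Repeating the selection, I choose $t_2\in(t_1,2\varepsilon/3)$ with $u(t_2)\in H^2(\R)$ and apply Theorem~\ref{teo2} with $s=2$ on $[t_2,T]$ to obtain (again after identification via uniqueness) $u\in C([t_2,T];H^2(\R))\cap L^2(t_2,T;H^3(\R))$. A final selection gives $t_3\in(t_2,\varepsilon)$ with $u(t_3)\in H^3(\R)$, and Theorem~\ref{prop3} (the case $s=3$) applied on $[t_3,T]$ with datum $u(t_3)$ produces $u\in B_{3,[t_3,T]}=C([t_3,T];H^3(\R))\cap L^2(t_3,T;H^4(\R))$. Since $t_3<\varepsilon\le T$, restricting to $[\varepsilon,T]$ yields exactly $u\in B_{3,[\varepsilon,T]}$. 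The hypothesis $b\in H^1(\R)$ required at each stage with $s>0$ is guaranteed by the standing assumptions of Theorem~\ref{teo2}.

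The only genuinely delicate point is the gluing: at each stage one must know that the more regular solution furnished by the well-posedness theory agrees with the original $u$ on the overlap $[t_k,T]$. This is precisely where uniqueness in the base class is essential, since both functions are mild solutions in $C([t_k,T];L^2(\R))\cap L^2(t_k,T;H^1(\R))$ sharing the value $u(t_k)$, and the contraction-mapping uniqueness underlying Theorem~\ref{teo1} applies verbatim on any subinterval, forcing them to coincide. Granting this identification, the measure-theoretic choice of the nested times $t_1<t_2<t_3<\varepsilon$ and the one-derivative gain per step are entirely routine, and the corollary follows.
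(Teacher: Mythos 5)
Your bootstrap argument is correct and is precisely the ``classical argument'' the paper has in mind: the paper itself omits the proof of Corollary~\ref{cor1}, deferring to the analogous smoothing results in \cite{rosier2006global} and \cite{cavalcanti2014global}, whose proofs proceed exactly as you do --- select, by the $L^2(0,T;H^{s+1}(\R))$ information, interior times of full measure where $u(t)$ gains one derivative, restart the flow there via the $H^s$ well-posedness theory (Theorem~\ref{teo2} for $s=1,2$ and Theorem~\ref{prop3} for $s=3$), and identify the regular solution with the restriction of $u$ through uniqueness of mild solutions in the $B_{0,T}$ class. You have also correctly isolated the only delicate points (the a.e.\ selection of restart times, the gluing by uniqueness on subintervals, and the need for $b\in H^1(\R)$ once $s>0$), so nothing is missing.
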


\begin{proof} The same result was obtained for the generalized KdV and the KdV-Burgers equations in \cite{rosier2006global} and \cite{cavalcanti2014global}, respectively.
Since the proof is analogous and follows from classical arguments we omit it.
\end{proof}

\subsection{\texorpdfstring{Case $p\geq 2$.}{Case 2}}

We first restrict ourselves to case $2\leq p < 5$ to obtain the existence of solutions in the $L^2$-setting, i. e., finite energy solutions. Next we prove the global well-posedness in the space $B_{3,T}$.

First, the reader is reminded of the following result which follows from the Egoroff theorem.

\begin{lem}\label{lm5}
Let $X$ be measure space and ${f_n}\in L^p(X)$, with $1\leq p < \infty$, such that $f_n \rightharpoonup  f$ in $L^p(X)$ and $f_n(x) \rightarrow g(x)$ a.e in $X$. Then $f(x)=g(x)$ a.e in $X$
\end{lem}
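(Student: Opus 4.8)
The plan is to show that $\int_A f\,d\mu = \int_A g\,d\mu$ for every measurable set $A$ with $\mu(A)<\infty$; since $f,g\in L^p(X)$ are each supported on a $\sigma$-finite set (every $\{|f|>1/k\}$ has finite measure, and likewise for $g$), this forces $f=g$ almost everywhere by the usual argument applied to $h=f-g$ over an exhausting sequence of finite-measure sets. So fix $A$ with $\mu(A)<\infty$ and let $q$ denote the conjugate exponent of $p$.

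First I would handle the weak side. Since $\mu(A)<\infty$, the indicator $\chi_A$ lies in $L^q(X)$, so testing the weak convergence $f_n\rightharpoonup f$ against $\chi_A$ gives
\begin{equation*}
\int_A f_n\,d\mu \longrightarrow \int_A f\,d\mu.
\end{equation*}
It then remains to prove that $\int_A f_n\,d\mu \to \int_A g\,d\mu$ as well, after which uniqueness of limits yields $\int_A f\,d\mu=\int_A g\,d\mu$.

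The heart of the matter is this second limit, and here Egoroff's theorem enters. Weakly convergent sequences are norm bounded (Banach--Steinhaus), so $M:=\sup_n\|f_n\|_{L^p}<\infty$, and Fatou's lemma applied to $f_n\to g$ a.e.\ gives $\|g\|_{L^p}\le M$, in particular $g\in L^p(X)$. Fix $\delta>0$. By Egoroff's theorem there is $B\subseteq A$ with $\mu(A\setminus B)<\delta$ on which $f_n\to g$ uniformly; since $\mu(B)<\infty$, uniform convergence gives $\int_B f_n\,d\mu\to\int_B g\,d\mu$. On the complementary sliver I would control both integrals by H\"older:
\begin{equation*}
\left|\int_{A\setminus B} f_n\,d\mu\right|\le \|f_n\|_{L^p}\,\mu(A\setminus B)^{1/q}\le M\delta^{1/q},
\end{equation*}
together with the same bound for $g$. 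Splitting $\int_A(f_n-g)\,d\mu$ over $B$ and $A\setminus B$ then yields $\limsup_n\left|\int_A f_n\,d\mu-\int_A g\,d\mu\right|\le 2M\delta^{1/q}$, and letting $\delta\to 0$ closes the argument.

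The one delicate point is the endpoint $p=1$, where $q=\infty$ and the H\"older tail estimate no longer decays with $\delta$; there one must instead invoke that a weakly convergent sequence in $L^1$ is uniformly integrable (Dunford--Pettis), which upgrades the a.e.\ convergence to $L^1$ convergence (Vitali) and again forces $f=g$. Since the only use of this lemma is in the $L^2$ setting, the Egoroff argument above with $1<p<\infty$ is all that is actually needed, and I would present that as the main proof while remarking on the $p=1$ case separately.
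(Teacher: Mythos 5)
Your proposal is correct and takes precisely the route the paper intends: the paper gives no proof of this lemma at all, remarking only that it ``follows from the Egoroff theorem,'' and your argument supplies exactly that Egoroff-based proof, with the reduction to finite-measure sets, the norm-boundedness of $\{f_n\}$ via Banach--Steinhaus, and Fatou to place $g$ in $L^p$ all handled correctly. Your separate treatment of the endpoint $p=1$ via Dunford--Pettis and Vitali is also sound, and your observation that it is not actually needed is accurate, since the paper invokes the lemma only with exponent $2$ and with $\alpha>1$.
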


We also note that, different from the case $1\leq p <2$, the next result is not obtained combining semigroup theory and fixed point arguments. Here, due to some
technical problems, the solution is obtained as limit of the regulars ones. We follows the ideas contained in \cite{rosier2006global}.

\begin{thm}\label{teo4}
Let $a$ be a $C^1(\R)$ function satisfying
\begin{equation*}\label{e35}
|a(\mu)| \leq C(1+|\mu|^{p}) \qquad  |a'(\mu)| \leq C(1+|\mu|^{p-1}), \qquad \forall \mu \in R,
\end{equation*}
with $2 \leq p < 5$. Then, for any $u_0 \in L^2(\R)$ the problem $(\ref{e1})$ admits at least one solution $u$, such that
\[
u \in C_w([0,T];L^2(\R))\cap L^2(0,T;H^1(\R)),\quad \text{for all $T>0$}.
\]
\end{thm}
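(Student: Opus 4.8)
The plan is to construct solutions by the vanishing-viscosity / Galerkin-type regularization strategy used for global weak solutions of dispersive equations, following the approach of Rosier and Zhang that the authors cite. The key difficulty, relative to the case $1\leq p<2$, is that for $p\geq 2$ the nonlinearity $a(u)u_x$ is no longer controlled by the $B_{0,T}$-norm through the fixed-point estimates of Lemma \ref{lm2}, so one cannot close a contraction directly in $L^2$. Instead one must obtain a solution as a weak limit of smooth approximations, using only the single a priori bound coming from the energy dissipation law.

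First I would set up an approximation scheme. For $u_0\in L^2(\R)$ choose a sequence $u_0^n\in H^3(\R)$ with $u_0^n\to u_0$ in $L^2(\R)$. By Theorem \ref{prop3} (applied with the given growth hypotheses, which hold here) each regularized initial datum yields a solution $u^n\in B_{3,T}$ of \eqref{e1}. The crucial point is that these smooth solutions satisfy the energy identity \eqref{e118},
\begin{equation*}
\|u^n(t)\|_2^2+2\int_0^t\|u^n_x(s)\|_2^2\,ds+2\int_0^t\int_{\R}b(x)|u^n(x,s)|^2\,dx\,ds=\|u_0^n\|_2^2,
\end{equation*}
which, since $b\geq 0$ under the standing hypotheses, gives a uniform bound on $u^n$ in $L^\infty(0,T;L^2(\R))\cap L^2(0,T;H^1(\R))$. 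I would then extract a subsequence converging weakly-$*$ in $L^\infty(0,T;L^2)$ and weakly in $L^2(0,T;H^1)$ to some limit $u$.

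The main obstacle is passing to the limit in the nonlinear term $a(u^n)u^n_x=[A(u^n)]_x$, where $A(v)=\int_0^v a(s)\,ds$, since weak convergence alone does not commute with the nonlinearity. The resolution is a compactness argument: from the equation one estimates $u^n_t$ in a negative-order space (for instance $L^{4/3}(0,T;H^{-2}(\R))$ or similar), where the constraint $p<5$ is exactly what is needed to bound $\||u^n|^p u^n_x\|$ via Gagliardo-Nirenberg using the uniform $L^\infty_t L^2_x$ and $L^2_t H^1_x$ bounds. An Aubin-Lions-type argument then yields strong convergence of $u^n$ in $L^2(0,T;L^2_{loc})$, hence $u^n\to u$ a.e.\ (along a further subsequence). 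Combining this pointwise convergence with the weak convergence of $A(u^n)$ in $L^2_{loc}$ via Lemma \ref{lm5} identifies the weak limit of the nonlinear term as $A(u)$, after checking that $A(u^n)\to A(u)$ a.e.\ (using continuity of $A$) and that $A(u^n)$ is bounded in an appropriate $L^q$. This lets one pass to the limit in the distributional formulation of \eqref{e1}.

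Finally I would upgrade the time regularity to obtain $u\in C_w([0,T];L^2(\R))$: the limit $u$ lies in $L^\infty(0,T;L^2)$ and has $u_t$ in a negative space, so a standard lemma (e.g.\ Lemma 1.4 in \cite[Chapter III]{temam1984theory} or the analogous statement) gives weak continuity in time with values in $L^2(\R)$. The membership $u\in L^2(0,T;H^1(\R))$ is inherited directly from the uniform bound and weak lower semicontinuity of the norm. The proof would note that uniqueness is not asserted, consistently with the remark in the introduction that uniqueness in the energy space remains open; the argument only delivers existence of at least one such solution.
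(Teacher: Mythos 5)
Your overall architecture (energy bound, weak limits, Aubin--Lions compactness via a negative-order bound on the time derivative where $p<5$ enters, a.e.\ convergence, identification of the nonlinear limit, and weak continuity in time via Temam's lemma) matches the paper's, but the approximation scheme you start from has a genuine gap. You regularize the \emph{initial data} and invoke Theorem \ref{prop3} to produce smooth solutions $u^n\in B_{3,T}$ of the equation with the original nonlinearity $a$; however, Theorem \ref{prop3} is stated and proved only for $1\le p<2$, so it does not apply here (your parenthetical ``the given growth hypotheses hold here'' overlooks the restriction on $p$). The result that does cover $H^3$ data with $p\ge 2$ is Theorem \ref{teo5}, and it requires $a\in C^2(\R)$ together with the extra growth condition $|a''(\mu)|\le C(1+|\mu|^{p-2})$ --- hypotheses that Theorem \ref{teo4} does not assume, since it only takes $a\in C^1$ with bounds on $a$ and $a'$. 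So under the stated hypotheses there is no well-posedness theorem available to generate your approximating sequence; this is precisely the obstruction the paper alludes to when it says the solution must be obtained ``as limit of the regular ones'' due to technical problems: for $2\le p<5$ and merely $C^1$ nonlinearities there is no local theory for the equation with $a$ itself, either in $L^2$ or in $H^3$.

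The paper circumvents this by regularizing the \emph{nonlinearity} instead of the data: it chooses $a_n\in C_0^\infty(\R)$ with $a_n\to a$ uniformly on compact sets and with the growth bounds (\ref{e41}) uniform in $n$. Since each $a_n$ is compactly supported, it satisfies $|a_n(\mu)|\le C_n(1+|\mu|)$ and $|a_n'(\mu)|\le C_n$, i.e.\ it falls under the $1\le p<2$ theory, so Theorem \ref{teo1} applies directly with the original datum $u_0\in L^2(\R)$ and yields global solutions $u_n\in B_{0,T}$ of (\ref{e43}), with the uniform bound $\|u_n\|_{0,T}\le 2C_T\|u_0\|_2$ coming from the energy identity (\ref{e118}). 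Note that no sign condition on $b$ is needed for this bound, only $b\in L^\infty(\R)$; your appeal to $b\ge 0$ is both unavailable in the hypotheses of Theorem \ref{teo4} and unnecessary. From that point on your limit argument coincides with the paper's, except for one step you leave implicit: the attainment of the initial condition, which the paper obtains from the strong convergence $u_n\to u$ in $C([0,T];H^{-1}_{loc}(\R))$ furnished by Simon's compactness theorem, giving $u(\cdot,0)=\lim_{n\to\infty} u_n(\cdot,0)=u_0$. If you want to salvage your scheme you must regularize the nonlinearity as well, at which point it becomes the paper's proof.
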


\begin{proof}
Consider a sequence $\{a_n\} \in C^{\infty}_0(\R),$ such that
\begin{align}\label{e41}
|a_n^{(j)}(\mu)|\leq C(1+|\mu|^{p-j}), \quad \forall \mu \in \R, \quad j=0,1,
\end{align}
\begin{equation}\label{e42}
a_n \longrightarrow a \quad \text{uniformly in each compact set in $\R$}.
\end{equation}
Note that $|a_n(\mu)|\leq C_n(1+|\mu|)$ and $|a'_n(\mu)|\leq C_n$. Then, for each $n$, Theorem \ref{teo1} guarantees the existence of a function $u_n \in B_{0,T}$ solution of
\begin{equation}\label{e43}
\left\lbrace \begin{tabular}{l}
$\partial_t u_n +\partial_x^3u_n-\partial_x^2u_n+a_n(u_n)\partial_xu_n+b(x)u_n=0$  \\
$u_n(0,x)=u_0(x),$
\end{tabular}\right.
\end{equation}
with $\|u_n\|_{0,T}\leq 2C_T\|u_0\|_{L^2(\R)}$. Hence,
\begin{equation}\label{e48}
\{u_n\} \quad \text{ is bounded in $C([0,T];L^2(\R))\cap L^2(0,T;H^1(\R))$. }
\end{equation}
From (\ref{e48}) we obtain a function $u$ and a subsequence, still denoted by the same index $n$, such that
\begin{equation}\label{e44}
u_n \rightharpoonup u \quad \text{in $L^{\infty}(0,T;L^2(\R))$ weak $*$}
\end{equation}
\begin{equation}\label{e45}
u_n \rightharpoonup u \quad \text{in $L^{2}(0,T;H^1(\R))$ weak.}
\end{equation}
In order to analyze the nonlinear term $a_n(u_n)\partial_xu_n$ we consider the functions
\begin{equation}\label{e46}
A(u):=\int_0^ua(v)dv \quad \text{and} \quad A_n(u):=\int_0^ua_n(v)dv.
\end{equation}
Note that $a_n(u_n)\partial_xu_n = \partial_x [A_n(u_n)]$. Then, taking $\alpha \in \left(1, \frac{6}{p+1}\right)$ and proceeding as in \cite[proof of Theorem 2,14]{rosier2006global}), we deduces that for each bounded set $I \subset \R$, the sequence $\{A_n(u_n)\}$ is bounded in $L^{\alpha}([0,T]\times I)$. Indeed,
\[
|A_n(u)|^{\alpha}\leq C\left( 2|u|+\frac{|u|^{p+1}}{p+1}\right)^{\alpha} \leq C'\left( |u|^{\alpha}+|u|^{\alpha(p+1)}\right)
\]
for some $C, C' >0$. Then,
\begin{align*}
\|A_n(u_n)\|^{\alpha}_{L^{\alpha}((0,T)\times I)} &\leq C''\left( \|u_n\|_{L^{2}(0,T;L^{2}(I))}^{\alpha} + \int_0^T\|u_n(t)\|^{\alpha(p+1)-2}_{\infty}\|u_n(t)\|_2 ^2dt \right) \\
&\leq C''\left( \|u_n\|_{0,T}^{\alpha}+ 2^{\frac{\alpha(p+1)-2}{2}}\int_0^T\|u_n(t)\|^{\frac{\alpha(p+1)+2}{2}}_2\|u_{nx}(t)\|^{\frac{\alpha(p+1)-2}{2}}_2dt \right) \\
&\leq C''\left(\|u_n\|_{0,T}^{\alpha} + 2^{\frac{\alpha(p+1)-2}{2}}T^{\frac{6-\alpha(p+1)}{4}}\|u_n\|^{\frac{\alpha(p+1)+2}{2}}_{0,T}\|u_n\|^{\frac{\alpha(p+1)-2}{2}}_{0,T}\right) \\
&\leq C''\left( \|u_n\|_{0,T}^{\alpha}+\|u_n\|^{\alpha(p+1)}_{0,T}\right)\\
&\leq \widetilde{C}\left( \|u_0\|_{2}^{\alpha}+\|u_0\|^{\alpha(p+1)}_{2}\right)
\end{align*}
Where $\widetilde{C}$ is a positive constant. Consequently,
\begin{equation*}\label{e61}
\{A_n(u_n)\} \quad \text{is bounded in $L^{\alpha}(0,T;H^{-1}(I))$} \quad (\text{since $L^{\alpha}(I) \hookrightarrow  H^{-1}(I)$})
\end{equation*}
and
\begin{equation}\label{e65}
\{a_n(u_n)\partial_xu_n\}=\{\partial_x[A_n(u_n)]\} \quad \text{is bounded in $L^{\alpha}(0,T;H^{-2}(I))$}.
\end{equation}
Moreover,  (\ref{e48}) and the fact that $1< \alpha \leq 2$ allow us to conclude that
\begin{equation*}
\text{$\{\partial^3_xu_n\}, \{\partial^2_xu_n\}$ and $\{bu_n\}$
are bounded in  $L^{2}(0,T;H^{-2}(\R)) \subset L^{\alpha}(0,T;H^{-2}(\R))$}
\end{equation*}
and, therefore,
\begin{equation}\label{e47}
\text{$\partial_tu_n=-\partial_x^3u_n+\partial_x^2u_n-a_n(u_n)\partial_xu_n-bu_n$ is bounded in $L^{\alpha}(0,T;H^{-2}(I)) \subset L^{1}(0,T;H^{-2}(\R))$}.
\end{equation}
Since $\{u_n\}$ is bounded in  $L^{\alpha}(0,T;H^{1}(\R))$ and the first embedding in $H^1(I)\hookrightarrow L^2(I) \hookrightarrow H^{-2}(I)$ is compact, we can apply Corollary 33 in \cite{simon1986compact} to conclude that $\{u_n\}$ is relative compact in $L^{2}(0,T;L^2(I))$. Consequently, there exists a subsequence, still denoted by  $\{u_n\}$, such that
\begin{equation}\label{e50}
\text{$u_{n} \longrightarrow u$ in $L^{2}(0,T;L^2_{loc}(\R))$ strongly and a.e.}
\end{equation}
Moreover, by (\ref{e45}),
\[
u_n \rightharpoonup u \quad \text{weak in $L^{2}(0,T;L^2(\R))\equiv L^{2}(\R\times (0,T))$}
\]
and by applying Lemma \ref{lm5}, we obtain
\begin{equation}\label{e51}
\text{$u_n \longrightarrow u$  a.e in $\R \times (0,T).$}
\end{equation}
Then, using (\ref{e42}), (\ref{e46}) and (\ref{e51}), it is easy to see that
\begin{equation*}\label{e52}
\text{$A_n(u_n(x,t)) \longrightarrow A(u(x,t))$ a.e in $\R \times (0,T).$}
\end{equation*}
Next, proceeding as in the previous steps and by applying Lemma \ref{lm5}, the following convergence holds
\begin{equation*}\label{e54}
\text{$A_{n}(u_{n}) \rightharpoonup A(u)$ weak in $L^{\alpha}(0,T; L^{\alpha}_{loc}(\R)).$}
\end{equation*}
Therefore, $A_{n}(u_{n}) \longrightarrow A(u)$ in $D'(\R\times (0,T))$ and, by taking the partial derivative, we obtain
\begin{equation}\label{e55}
\text{$a_n(u_n)\partial_xu_n \longrightarrow a(u)\partial_x u$ in $D'(\R\times(0,T)).$}
\end{equation}
From (\ref{e50}) and (\ref{e55}), we can take the limit in (\ref{e43}) to conclude that $u$ solves the equation (\ref{e1}) in the sense of distribution, i.e,
\begin{equation}\label{e56}
u_t+u_{xxx}-u_{xx}+a(u)u_x+bu=0 \quad \text{in $D'(\R \times (0,T))$.}
\end{equation}
Now, note that (\ref{e45}) yields
\begin{align*}
&u_{xxx} \in L^2(0,T;H^{-2}(\R))\hookrightarrow  L^{\alpha}(0,T;H^{-2}(\R)),  \\
&u_{xx} \in L^2(0,T;H^{-1}(\R))\hookrightarrow  L^{\alpha}(0,T;H^{-2}(\R)),  \\
&bu \in L^2(0,T;H^{1}(\R))\hookrightarrow  L^{\alpha}(0,T;H^{-2}(\R)).
\end{align*}
Moreover, by (\ref{e65}) $a(u)u_x=[A(u)u]_x \in L^{\alpha}(0,T;H^{-2}_{loc}(\R))$. Then, from (\ref{e56}), we deduce that $u_t \in L^{\alpha}(0,T;H^{-2}_{loc}(\R))$ and by (\ref{e45}),  
we can conclude that $u \in C([0,T];H^{-2}_{loc}(\R))$. On the other hand, by (\ref{e48}) and (\ref{e47}), we infer from Corollary 33 in \cite{simon1986compact} that $\{u_n\}$ is relatively compact in $C([0,T];H^{-1}_{loc}(\R))$. Therefore, there exists and subsequence (denoted by $\{u_n\}$), such that
\begin{equation}\label{e58}
\text{$u_n \longrightarrow u$ in $C([0,T];H^{-1}_{loc}(\R))$.}
\end{equation}
In particular, $u(x,0)=\lim_{n\rightarrow \infty}u_n(x,0)=u_0(x)$. Finally, note that
\[
u \in L^{\infty}(0,T;L^2(\R))\cap C_w([0,T];H^{-1}_{loc}(\R))
\]
and from Lemma 1.4 in \cite[Ch III]{temam1984theory}, it follows that $u\in C_w([0,T];L^{2}(\R)).$

\end{proof}

\begin{mydef}\label{def2}
Let $T>0$, A function $u \in C_w([0,T];L^2(\R))\cap L^2(0,T;H^1(\R))$ is said to be a weak solution of problem (\ref{e1}) if there exist a sequence $\{a_n\}$ of function in $C_0^{\infty}(\R)$ satisfying (\ref{e41}) and (\ref{e42}) and a sequence of strong solution $u_n$ to (\ref{e43}), such that (\ref{e44}),(\ref{e45}),(\ref{e51}) and (\ref{e58}) hold true.
\end{mydef}

The proof of next result also requires an adaptation of Lemma \ref{lm2} as follows.

\begin{lem}\label{lm3}
For any $T>0$, $ p \geq 1$ and $u, v, w \in B_{3,T}$, such that $u_t, v_t, w_t \in B_{0,T}$,  we have
\begin{enumerate}
\item[(i)] $\|(a(u)v_x)_x\|_{L^2(0,T;L^2(\R))} \leq CT^{\frac{1}{2}} \left\lbrace \|u\|_{3,T}\|v\|_{3,T}+2\|u\|_{3,T}^p\|v\|_{3,T}  +\|v\|_{3,T}\right\rbrace$,
\item[(ii)]
\begin{align*}
\|a(u)v_x\|_{W^{1,1}(0,T;L^2(\R))} \leq CT^{\frac{1}{2}}& \left\lbrace (\|v\|_{3,T}+\|v_t\|_{0,T})+ \|u\|_{3,T}^p(\|v\|_{3,T}+\|v_t\|_{0,T})+ \|u_t\|_{0,T}\|v\|_{3,T}\right. \\
&\left.+\|u_t\|_{0,T}\|v\|_{3,T}\|u\|_{3,T}^{p-1}\right\rbrace,
\end{align*}
\item[(iii)] $\|uw_x\|_{W^{1,1}(0,T;L^2(\R))} \leq 2^{\frac{1}{2}}T^{\frac{1}{4}}\left\lbrace \|u\|_{3,T}\|w\|_{3,T}+ \|u_t\|_{0,T}\|w\|_{3,T}+\|u\|_{3,T}\|w_t\|_{0,T}\right\rbrace$.
\item[(iv)] If $p \geq 2$, then
\begin{align*}
\|u|w|^{p-1}v_x\|_{W^{1,1}(0,T;L^2(\R))} \leq  &T^{\frac{1}{2}}\left\lbrace \|u\|_{3,T}\|w\|_{3,T}^{p-1}(\|v\|_{3,T}+\|v_t\|_{0,T})+\|v\|_{3,T}\|w\|_{3,T}^{p-1}\|u_t\|_{0,T} \right.\\
&\left.+(p-1)\|u\|_{3,T}\|w\|_{3,T}^{p-2}\|v\|_{3,T}\|w_t\|_{0,T}\right\rbrace.
\end{align*}
\end{enumerate}
\end{lem}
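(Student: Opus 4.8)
The plan is to prove each of the four estimates by the same mechanism: expand the relevant $x$- or $t$-derivative by the product and chain rules, and then reduce each resulting summand to a product of $L^2$-norms of $u,v,w$ and their first time-derivatives, using only (a) the growth hypotheses $|a(\mu)|\le C(1+|\mu|^p)$ and $|a'(\mu)|\le C(1+|\mu|^{p-1})$, (b) the Sobolev embedding $H^1(\R)\hookrightarrow L^\infty(\R)$ together with the interpolation inequality \eqref{e6}, namely $\|u\|_\infty^2\le 2\|u\|_2\|u_x\|_2$, and (c) the control $\|u\|_{C([0,T];H^3)}\le\|u\|_{3,T}$ so that $u$, $u_x$, $u_{xx}$ are all uniformly bounded in $L^\infty(\R)$ in terms of $\|u\|_{3,T}$. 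The passage from a pointwise-in-$t$ bound to the stated $L^1(0,T)$ or $L^2(0,T)$ norm is then a routine H\"older step in the time variable, generating the powers $T^{1/2}$ or $T^{1/4}$ exactly as in Lemma \ref{lm2}.

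Concretely, for $(i)$ I would write $(a(u)v_x)_x=a'(u)u_xv_x+a(u)v_{xx}$; each factor $a'(u)$, $a(u)$ is bounded pointwise by $C(1+\|u\|_\infty^{p-1})$ resp. $C(1+\|u\|_\infty^{p})$, and since $u\in B_{3,T}$ gives $\|u\|_\infty,\|u_x\|_\infty\lesssim\|u\|_{3,T}$, the two terms are dominated by $\|u\|_{3,T}\|v\|_{3,T}$ and $\|u\|_{3,T}^p\|v\|_{3,T}$ plus lower-order contributions, and integrating in $t$ over $[0,T]$ in $L^2$ yields the factor $T^{1/2}$. For the three $W^{1,1}(0,T;L^2)$ estimates $(ii)$–$(iv)$, the only new ingredient is the time-derivative: for $(ii)$ one computes $\partial_t\bigl(a(u)v_x\bigr)=a'(u)u_t\,v_x+a(u)\,v_{xt}$, and each piece is estimated by the same product rule, the term $a'(u)u_t v_x$ being the source of the $\|u_t\|_{0,T}\|v\|_{3,T}(1+\|u\|_{3,T}^{p-1})$ contributions in the statement. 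The spatial ($L^1$) part of the $W^{1,1}$-norm is handled exactly as in Lemma \ref{lm2}(ii),(iii). Part $(iv)$ is the same computation applied to $u|w|^{p-1}v_x$, where $\partial_t|w|^{p-1}=(p-1)|w|^{p-2}\,\mathrm{sgn}(w)\,w_t$ is legitimate precisely because $p\ge2$ makes $\mu\mapsto|\mu|^{p-1}$ continuously differentiable; this is why the hypothesis $p\ge2$ is imposed only here.

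The main obstacle, as in the previous lemmas, is purely bookkeeping: keeping track of which of $u,v,w$ each norm attaches to after the product rule, and verifying that differentiating $a(u)$ or $|w|^{p-1}$ never produces a factor whose $L^\infty$-bound exceeds what $\|\cdot\|_{3,T}$ controls. The one genuine subtlety is the differentiability of $|w|^{p-1}$ at $w=0$ in part $(iv)$; for $p=2$ the map $w\mapsto|w|$ is only Lipschitz, so strictly one should interpret $\partial_t|w|$ via $\mathrm{sgn}(w)w_t$ (valid a.e.\ since $w_t\in B_{0,T}$ and $w\in B_{3,T}$ gives $w$ absolutely continuous in $t$), which is the standard justification and introduces no change in the final bound. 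Once these pointwise estimates are in hand, integrating in time with H\"older's inequality gives the stated powers of $T$ and completes all four parts.
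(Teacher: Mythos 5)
Your proposal is correct and follows essentially the same route as the paper: expand $(a(u)v_x)_x$ and the time derivatives by the product/chain rule, bound $a(u)$, $a'(u)$, and the low-order spatial derivatives in $L^\infty$ via the growth conditions and the embeddings $\partial_x^j u \in C([0,T];H^{3-j}(\R))\hookrightarrow C([0,T];C(\R))$ for $j=0,1,2$, and then apply H\"older in time to produce the factors $T^{1/2}$ (or $T^{1/4}$ via Lemma \ref{lm2}(ii) for part (iii)). Your explicit justification of $\partial_t|w|^{p-1}=(p-1)|w|^{p-2}\,\mathrm{sgn}(w)\,w_t$ at the borderline case $p=2$ is a point the paper passes over silently, but it does not change the argument.
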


\begin{proof}
First, note that if $u\in B_{3,T}$ we have
\begin{equation}\label{e25}
\begin{tabular}{l}
$\left\lbrace\begin{tabular}{l}
$\partial^j_x u \in C([0,T];H^{3-j}(\R))\hookrightarrow C([0,T];C(\R)) $ \\
$\|\partial^j_x u\|_{C([0,T];C(\R))}\leq C\|u\|_{3,T}$
\end{tabular}\right., \quad j=0,1,2,$ \\
\\
$\left\lbrace\begin{tabular}{l}
$\partial^j_x u \in L^2([0,T];H^{4-j}(\R))\hookrightarrow L^2([0,T];L^2(\R))$\\
$\|\partial^j_x u\|_{L^2([0,T];L^2(\R))}\leq C\|u\|_{3,T}$
\end{tabular}\right., \quad j=0,1,2,3.$
\end{tabular}
\end{equation}

\begin{enumerate}
\item[(i)] (\ref{e2}) and (\ref{e25}) imply that
\begin{align*}
\|(a(u)v_x)_x\|_{L^2(0,T;L^2(\R))} &\leq C\{ T^{\frac{1}{2}}\|u_x\|_{C([0,T];C(\R))}\|v_x\|_{C(0,T;L^2(\R))} \\
&+ T^{\frac{1}{2}}\|u\|_{C([0,T];C(\R))}^{p-1}\|u_x\|_{C([0,T];C(\R))}\|v_x\|_{C(0,T;L^2(\R))}+T^{\frac{1}{2}}\|v_{xx}\|_{C([0,T];L^2(\R))} \\
&+ T^{\frac{1}{2}}\|u\|_{C([0,T];C(\R))}^{p}\|v_{xx}\|_{C(0,T;L^2(\R))} \} \\
&\leq CT^{\frac{1}{2}} \left\lbrace \|u\|_{3,T}\|v\|_{3,T}+2\|u\|_{3,T}^p\|v\|_{3,T}  +\|v\|_{3,T}\right\rbrace.
\end{align*}
\item[(ii)] By (\ref{e2}), H\"{o}lder inequality and (\ref{e25}) we get
\begin{align*}
\|a(u)v_x\|_{W^{1,1}(0,T;L^2(\R))} &\leq C \left\lbrace \|v_x\|_{L^2(0,T;L^2(\R))}T^{\frac{1}{2}} +\|u\|_{C([0,T];C(\R))}^p \|v_x\|_{L^2(0,T;L^2(\R))}T^{\frac{1}{2}} \right.\\
&+ \|v_x\|_{C([0,T];C(\R))} \|u_t\|_{L^2(0,T;L^2(\R))}T^{\frac{1}{2}}  \\
&+ \|u\|_{C([0,T];C(\R))}^{p-1} \|v_x\|_{C([0,T];C(\R))}\|u_t\|_{L^2(0,T;L^2(\R))}T^{\frac{1}{2}} \\
&\left. +T^{\frac{1}{2}} \|v_{tx}\|_{L^2(0,T;L^2(\R))}+\|u\|_{C([0,T];C(\R))}^p\|v_{tx}\|_{L^2(0,T;L^2(\R))}T^{\frac{1}{2}} \right\rbrace \\
&\leq C T^{\frac{1}{2}} \left\lbrace \|v\|_{3,T}+\|u\|_{3,T}^p\|v\|_{3,T}+\|v\|_{3,T}\|u_t\|_{0,T}+\|u\|_{3,T}^{p-1}\|v\|_{3,T}\|u_t\|_{0,T} \right.\\
&\left. +\|v_t\|_{0,T}+\|u\|_{3,T}^p\|v_t\|_{0,T}\right\rbrace.
 \end{align*}
\item[(iii)] It is a consequence of (ii) in Lemma \ref{lm2}.
\item[(iv)] H\"{o}lder inequality and (\ref{e25}) lead to the desired result:
\begin{align*}
\|u|w|^{p-1}v_x\|_{W^{1,1}(0,T;L^2(\R))} &\leq \|u\|_{C([0,T];C(\R))} \|w\|_{C([0,T];C(\R))}^{p-1} \|v_x\|_{L^2(0,T;L^2(\R))}T^{\frac{1}{2}} \\
&+\|v_x\|_{C([0,T];C(\R))} \|w\|_{C([0,T];C(\R))}^{p-1} \|u_t\|_{L^2(0,T;L^2(\R))}T^{\frac{1}{2}}  \\
&+(p-1)\|u\|_{C([0,T];C(\R))} \|w\|_{C([0,T];C(\R))}^{p-2} \|v_x\|_{C([0,T];C(\R))}\|w_t\|_{L^2(0,T;L^2(\R))}T^{\frac{1}{2}} \\
&+ \|u\|_{C([0,T];C(\R))} \|w\|_{C([0,T];C(\R))}^{p-1} \|v_{tx}\|_{L^2(0,T;L^2(\R))}T^{\frac{1}{2}} \\
&\leq T^{\frac{1}{2}}\left\lbrace \|u\|_{3,T}\|w\|_{3,T}^{p-1}\|v\|_{3,T}+\|v\|_{3,T}\|w\|_{3,T}^{p-1}\|u_t\|_{0,T} \right.\\
&\left.+(p-1)\|u\|_{3,T}\|w\|_{3,T}^{p-2}\|v\|_{3,T}\|w_t\|_{0,T}+\|u\|_{3,T}\|w\|_{3,T}^{p-1}\|v_t\|_{0,T}\right\rbrace.
\end{align*}
\end{enumerate}

\end{proof}

Proposition \ref{prop5} asserts that the inhomogeneous linear problem (\ref{ee1}) is well-posedness and we have the existence of a mild solution. However, we can have a regular solution as shows the following result.

\begin{prop}\label{prop8}
Let $T>0$, $b \in H^1(\R)$ and $u_0 \in H^3(\R)$. If $f \in W^{1,1}(0,T;L^2(\R))$ and $f_x \in L^2(0,T;L^2(\R))$, the inhomogeneous linear problem (\ref{ee1}) has a unique regular solution $u \in B_{3,T}$, such that
\begin{equation}\label{ecc1}
\|u\|_{3,T}\leq C_{3,T}\left\lbrace\|u_0\|_{H^3(\R)}+\|f\|_{W^{1,1}(0,T;L^2(\R))}+\|f_x\|_{L^2(0,T;L^2(\R))} \right\rbrace,
\end{equation}
$u_t \in B_{0,T}$ and
\begin{equation}\label{ecc2}
\|u_t\|_{0,T}\leq C_{0,T}\left\lbrace\|u_0\|_{H^3(\R)}+\|f(0)\|_{L^2(\R)}+\|f_t\|_{L^1(0,T;L^2(\R))} \right\rbrace,
\end{equation}
where $C_{3,T}=2Ce^{\|b\|_{\infty}T}$ and $C_{0,T}=2e^{\|b\|_{\infty}T}$.
\end{prop}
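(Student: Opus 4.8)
The plan is to combine the abstract regularity theory for the Cauchy problem $u_t = A_b u + f$ with the linear estimate of Proposition \ref{prop5} and the Fourier-multiplier argument already used in Step~1 of Theorem \ref{prop3}. Since $u_0 \in H^3(\R) = D(A_b)$ and $f \in W^{1,1}(0,T;L^2(\R))$, we have in particular $f \in C([0,T];L^2(\R))$, so that $f(0)$ is well defined; the classical regularity theory for the $C_0$-semigroup generated by $A_b$ then ensures that the mild solution given by Proposition \ref{prop5} is in fact a strong solution, with $u \in C([0,T];H^3(\R))$, $u_t \in C([0,T];L^2(\R))$, and the identity $u_t = A_b u + f$ holding in $L^2(\R)$ for every $t \in [0,T]$.

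Granting this, I would first prove \eqref{ecc2}. Put $v = u_t$ and differentiate the equation in time: $v$ solves the same linear problem with forcing $f_t$ and initial datum
\[
v(0) = A_b u_0 + f(0) = \partial_x^2 u_0 - \partial_x^3 u_0 - b u_0 + f(0).
\]
Because $u_0 \in H^3(\R)$ and $b \in H^1(\R) \hookrightarrow L^\infty(\R)$, this datum belongs to $L^2(\R)$ and $\|v(0)\|_2 \leq C\big(\|u_0\|_{H^3(\R)} + \|f(0)\|_2\big)$, while $f_t \in L^1(0,T;L^2(\R))$ by hypothesis. Applying Proposition \ref{prop5} to $v$ yields $u_t = v \in B_{0,T}$ together with \eqref{ecc2}, the constant $C_{0,T} = 2e^{\|b\|_\infty T}$ being exactly the one furnished by that proposition.

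To establish \eqref{ecc1} I would read the equation as the relation $u_{xxx} - u_{xx} = f - u_t - bu$ and argue as in \eqref{e72}--\eqref{e73}: taking the Fourier transform and dividing by the symbol $1 + \xi^2 - i\xi^3$ gives, for each $t$,
\[
\|u(t)\|_{H^3(\R)} \leq C\big\{ \|f(t)\|_2 + \|u_t(t)\|_2 + (1+\|b\|_\infty)\|u(t)\|_2 \big\}.
\]
Taking the supremum over $[0,T]$ and using the embedding $W^{1,1}(0,T;L^2(\R)) \hookrightarrow C([0,T];L^2(\R))$, the bound $\|u\|_{0,T} \leq C_T\{\|u_0\|_2 + \|f\|_{L^1(0,T;L^2(\R))}\}$ from Proposition \ref{prop5}, and the estimate \eqref{ecc2} for $\|u_t\|_{0,T}$, one controls $\|u\|_{C([0,T];H^3(\R))}$ by the right-hand side of \eqref{ecc1}. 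For the remaining piece $u \in L^2(0,T;H^4(\R))$ I would differentiate the equation once in $x$, writing $u_{xxxx} = f_x - u_{tx} + u_{xxx} - (bu)_x$, and estimate in $L^2(0,T;L^2(\R))$: the term $u_{tx}$ is bounded by $\|u_t\|_{0,T}$ since $u_t \in L^2(0,T;H^1(\R))$, the term $u_{xxx}$ by $T^{1/2}\|u\|_{C([0,T];H^3(\R))}$, the term $(bu)_x$ by $2\|b\|_{H^1(\R)}\|u\|_{0,T}$ exactly as in Step~4 of Theorem \ref{prop3}, and $\|f_x\|_{L^2(0,T;L^2(\R))}$ is given.

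Collecting these bounds gives $u \in B_{3,T}$ with the estimate \eqref{ecc1} and constant $C_{3,T} = 2Ce^{\|b\|_\infty T}$, while uniqueness is inherited from the uniqueness of the mild solution in Proposition \ref{prop5}. The step that needs the most care is the first one: verifying that the mild solution is genuinely a strong solution, so that the time-differentiation producing the auxiliary problem for $v = u_t$ is legitimate. If one prefers to avoid invoking the abstract regularity theorem, the whole argument can instead be carried out on smooth approximations $(u_0^k, f^k)$ of the data, deriving \eqref{ecc1}--\eqref{ecc2} uniformly in $k$ and passing to the limit.
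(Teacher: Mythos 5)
Your proposal is correct, and its skeleton coincides with the paper's: both obtain a regular solution from abstract semigroup theory, both get \eqref{ecc2} by observing that $v=u_t$ solves the same linear problem with data $A_bu_0+f(0)$ and forcing $f_t$ and then applying Proposition \ref{prop5}, and both get the $L^2(0,T;H^4(\R))$ part by differentiating the equation in $x$ and estimating $u_{xxxx}=f_x-u_{tx}+u_{xxx}-(bu)_x$ term by term. The one genuine difference is how the bound on $\|u\|_{C([0,T];H^3(\R))}$ is produced. The paper runs a cascade of energy estimates: it multiplies the equation by $u_{xx}$ and integrates to control $\sup_t\|u_x(t)\|_2$ (see \eqref{ecc8}), multiplies by $\partial_x^4u$ to control $\sup_t\|u_{xx}(t)\|_2$ (see \eqref{ecc9}), and then reads $\|u_{xxx}(t)\|_2$ off the equation using the bound on $u_t$ (see \eqref{ecc10}). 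You instead reuse the Fourier-symbol argument \eqref{e72}--\eqref{e73} from Theorem \ref{prop3}: writing $u_{xxx}-u_{xx}=f-u_t-bu$ and dividing by $1+\xi^2-i\xi^3$ gives $\|u(t)\|_{H^3(\R)}\leq C\left\lbrace\|f(t)\|_2+\|u_t(t)\|_2+(1+\|b\|_\infty)\|u(t)\|_2\right\rbrace$ in one stroke. Your route is shorter and eliminates the two integration-by-parts computations, at the price of making the whole $H^3$ bound hinge on \eqref{ecc2} (so that estimate must come first, and one must note $\|f(0)\|_2+\|f_t\|_{L^1(0,T;L^2(\R))}\leq C\|f\|_{W^{1,1}(0,T;L^2(\R))}$ to land on the right-hand side of \eqref{ecc1}, which you implicitly do); the paper's energy route is more elementary and produces the intermediate bounds \eqref{ecc8}--\eqref{ecc9} without invoking $u_t$. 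Your flagged concern about upgrading the mild solution to a strong one is not a gap relative to the paper, which disposes of it with the same appeal to semigroup theory; your fallback via smooth approximation of the data is a reasonable way to make that step airtight.
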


\begin{proof}
By using the semigroup theory and the previous results, we obtain a unique regular solution $u \in C([0,T];H^3(\R))$. Therefore, we will prove that $u \in L^2(0,T,H^4(\R))$. Indeed, first note that $u_0 \in H^3(\R) \hookrightarrow L^2(\R)$, hence applying the Proposition \ref{prop5} it follows that $u \in B_{0,T}$ and
\begin{align}\label{ecc3}
\|u\|_{0,T}&\leq \leq C_T \left\lbrace \|u_0\|_{H^3(/R)} +\|f\|_{W^{1,1}(0,T; L^2(\R))}+\|f_x\|_{L^2(0,T; L^2(\R))} \right\rbrace
\end{align}
where $C_T= 2e^{\|b\|_{\infty}T}$. On the other hand, note that $u_t$ solves the problem
\begin{equation*}
\left\lbrace \begin{tabular}{l l}
$v_t -v_{xx}+v_{xxx}+bv=f_t$ & in $\R \times (0, \infty)$ \\
$v(0)=v_0$ & in $\R \times (0, \infty)$,
\end{tabular}\right.
\end{equation*}
where $v_0=\partial_x^2u_0-\partial_x^3u_0-bu_0+f(\cdot,0) \in L^2(\R)$. Then, by applying the Proposition \ref{prop5}, we have $u_t \in B_{0,T}$ and
\begin{align}\label{ecc4}
\|u_t\|_{0,T}&\leq _T \left\lbrace \|u_0\|_{H^3(\R)}+\|f(0)\|_{2} +\|f_t\|_{L^1(0,T; L^2(\R))}\right\rbrace,
\end{align}
obtaining (\ref{ecc2}). Moreover,
\begin{align}\label{ecc6}
\|(bu)_x\|_{L^2(0,T; L^2(\R))} &\leq \|b_x\|_2\|u\|_{L^2(0,T; L^{\infty}(\R))} +\|b\|_{\infty}\|u_x\|_{L^2(0,T; L^2(\R))} \notag \\
&\leq C \|b\|_{H^1(\R)}\|u\|_{0,T},
\end{align}
where $C$ is the embedding constant of $H^1(\R) \hookrightarrow L^{\infty}(\R)$. Since
\begin{equation*}\label{ecc5}
\partial_x^4u = \partial_x^3u -\partial_xu_t -\partial_x(bu) +\partial_xf \quad \text{in $D'(\R)$}, \quad \text{for all $t>0$},
\end{equation*}
we have that $\partial_x^4 u \in L^2(0,T;L^2(\R))$, i. e., $u\in L^2(0,T;H^4(\R))$ and $u \in B_{3,T}$. In order to prove (\ref{ecc1}), we need some estimates. Note that, from (\ref{ecc3}), we get
\begin{equation}\label{ecc7}
\sup_{t\in[0,T]}\|u(t)\|_2 \leq C_T \left\lbrace \|u_0\|_{H^3(\R)} +\|f\|_{W^{1,1}(0,T; L^2(\R))}+\|f_x\|_{L^2(0,T; L^2(\R))} \right\rbrace.
\end{equation}
Multiplying the equation in (\ref{ee1}) by $u_{xx}$ and integrating in $\R$ one obtains the inequality
\begin{align*}
\frac{1}{2}\frac{d}{dt}\|u_{x}(t)\|_2 +\|u_{xx}(t)\|_2\leq \left\lbrace \|f(t)\|_2 +\|bu(t)\|_2\right\rbrace \|u_{xx}(t)\|_2.
\end{align*}
Then, Young inequality leads to
\begin{align*}
\frac{1}{2}\frac{d}{dt}\|u_{x}(t)\|_2^2 +\frac{1}{2}\|u_{xx}(t)\|_2^2\leq C \left\lbrace \|f(t)\|_2^2 +\|b\|_{\infty}\|u(t)\|_2 ^2\right\rbrace
\end{align*}
Integrating on $[0,T]$, using (\ref{ecc3}) and the embedding $W^{1,1}(0,T)(0,T;L^2(\R))\hookrightarrow L^{\infty}(0,T;L^2(\R))$, the solution can be estimated as follows
\begin{equation}\label{ecc8}
\sup_{t\in[0,T]}\|u_{x}(t)\|_2 \leq C C_T \left\lbrace \|u_0\|_{H^3(\R)} +\|f\|_{W^{1,1}(0,T; L^2(\R))}+\|f_x\|_{L^2(0,T; L^2(\R))} \right\rbrace.
\end{equation}
A similar estimates is obtained by multiplying the equation  by $\partial_x ^4u$, integrating in $\R$ and using Young inequality:
\begin{align*}
\frac{1}{2}\frac{d}{dt}\|u_{xx}(t)\|_2^2 +\frac{1}{2}\|u_{xxx}(t)\|_2^2\leq C \left\lbrace \|f_x(t)\|_2^2 +\|(bu)_x(t)\|_2 ^2\right\rbrace.
\end{align*}
Integrating on $[0,T]$ and using (\ref{ecc6}) and (\ref{ecc3}), we have
\begin{equation}\label{ecc9}
\sup_{t\in[0,T]}\|u_{xx}(t)\|_2 \leq C C_T \left\lbrace \|u_0\|_{H^3(\R)} +\|f\|_{W^{1,1}(0,T; L^2(\R))}+\|f_x\|_{L^2(0,T; L^2(\R))} \right\rbrace.
\end{equation}
Since
\begin{align*}
\|u_{xxx}(t)\|_2 \leq \|u_t(t)\|_2+\|u_{xx}(t)\|_2 + \|bu(t)\|_2 +\|f(t)\|_2,
\end{align*}
using (\ref{ecc3}), (\ref{ecc4}), (\ref{ecc9}) and the embedding above, we conclude that
\begin{equation}\label{ecc10}
\sup_{t\in[0,T]}\|u_{xxx}(t)\|_2 \leq C C_T \left\lbrace \|u_0\|_{H^3(\R)} +\|f\|_{W^{1,1}(0,T; L^2(\R))}+\|f_x\|_{L^2(0,T; L^2(\R))} \right\rbrace.
\end{equation}
Putting together (\ref{ecc7}), (\ref{ecc8}), (\ref{ecc9}) and (\ref{ecc10}), we have
\begin{equation}\label{ecc11}
\|u\|_{C([0,T];H^3(\R))} \leq C C_T \left\lbrace \|u_0\|_{H^3(\R)} +\|f\|_{W^{1,1}(0,T; L^2(\R))}+\|f_x\|_{L^2(0,T; L^2(\R))} \right\rbrace.
\end{equation}
On the other hand,
\begin{align*}
\|\partial_x^4 u\|_{L^2(0,T;L^2(\R))} &\leq \|u_{xxx}\|_{L^2(0,T;L^2(\R))}+ \|\partial_xu_t\|_{L^2(0,T;L^2(\R))}+\|(bu)_x\|_{L^2(0,T;L^2(\R))}+\| f_x\|_{L^2(0,T;L^2(\R))} \\
&\leq T^{\frac{1}{2}}\|u\|_{C([0,T];H^3(\R))}+ \|u_t\|_{0,T}+\|(bu)_x\|_{L^2(0,T;L^2(\R))}+\| f_x\|_{L^2(0,T;L^2(\R))}.
\end{align*}
The above inequality, (\ref{ecc3}) - (\ref{ecc6}) and (\ref{ecc10}) allow us to conclude that
\begin{equation}\label{ecc12}
\|\partial_x^4 u\|_{L^2(0,T;L^2(\R))}  \leq C C_T \left\lbrace \|u_0\|_{H^3(\R)} +\|f\|_{W^{1,1}(0,T; L^2(\R))}+\|f_x\|_{L^2(0,T; L^2(\R))} \right\rbrace.
\end{equation}
(\ref{ecc11}) and (\ref{ecc12}) implies (\ref{ecc1}).
\end{proof}

\begin{thm}\label{teo5}
Let $b\in H^1(\R)$ and $a \in  C^2(\R)$ satisfying
\begin{gather}\label{e14}
|a(\mu)| \leq C(1+|\mu|^{p}), \quad  |a'(\mu)| \leq C(1+|\mu|^{p-1}) \quad \text{and} \quad  |a''(\mu)| \leq C(1+|\mu|^{p-2}), \qquad \forall \mu \in \R,
\end{gather}
with $p\geq 2$. Let $T>0$ and $u_0 \in H^3(\R)$. Then, there exist a unique solution $u \in B_{3,T}$ of $(\ref{e1})$, such that
\begin{equation*}\label{e15}
\|u\|_{3,T}\leq \eta_3(\|u_0\|_2)\|u_0\|_{H^3(\R)},
\end{equation*}
where $\eta_3: \R_+\rightarrow \R_+$ is a nondecreasing continuous function.
\end{thm}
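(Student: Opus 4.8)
The plan is to mirror the scheme of Theorem \ref{prop3}, but since for $p\geq 2$ we no longer have a globally defined $L^2$ flow at our disposal, the local solution must be built directly in the regular class. I would work in the Banach space
\[
Y_T:=\{u\in B_{3,T}: u_t\in B_{0,T}\},\qquad \|u\|_{Y_T}:=\|u\|_{3,T}+\|u_t\|_{0,T},
\]
and set up a contraction. Given $w\in Y_T$, put $f:=-a(w)w_x$ and solve the linear inhomogeneous problem (\ref{ee1}) with this source and datum $u_0$. The point of Lemma \ref{lm3} is precisely that $f\in W^{1,1}(0,T;L^2(\R))$ (part (ii)) and $f_x\in L^2(0,T;L^2(\R))$ (part (i)), so Proposition \ref{prop8} applies and produces a unique regular solution $u=:\Gamma(w)\in B_{3,T}$ with $u_t\in B_{0,T}$. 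Thus $\Gamma$ maps $Y_T$ into itself.

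Next I would close the fixed point on a short interval. Combining the two estimates (\ref{ecc1})--(\ref{ecc2}) of Proposition \ref{prop8} with Lemma \ref{lm3}(i)--(ii) gives a bound of the form $\|\Gamma(w)\|_{Y_T}\leq C(\|u_0\|_{H^3(\R)})\|u_0\|_{H^3(\R)}+P(\|w\|_{Y_T})\,T^{\gamma}$ for suitable exponents $\gamma>0$ and a polynomial $P$ with $P(0)=0$; restricting to a ball $\{\|w\|_{Y_T}\leq R\}$ with $R$ chosen in terms of $\|u_0\|_{H^3(\R)}$ and taking $T=\theta$ small makes $\Gamma$ a self-map. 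For the contraction, $\Gamma(w_1)-\Gamma(w_2)$ solves (\ref{ee1}) with zero datum and source $-(a(w_1)w_{1,x}-a(w_2)w_{2,x})$; writing this difference as $(a(w_1)-a(w_2))w_{1,x}+a(w_2)(w_{1,x}-w_{2,x})$, using the mean value theorem together with the growth hypotheses (\ref{e14}) --- here the second-derivative bound on $a$ is needed to control the time derivative demanded by the $W^{1,1}$-norm in (\ref{ecc2}) --- and invoking the difference versions of Lemma \ref{lm3}, one gets $\|\Gamma(w_1)-\Gamma(w_2)\|_{Y_\theta}\leq \tfrac12\|w_1-w_2\|_{Y_\theta}$ for $\theta$ small. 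Banach's fixed point theorem then yields a unique local solution $u\in Y_\theta$ of (\ref{e1}), and uniqueness in $B_{3,T}$ follows from the same Lipschitz bound.

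To pass from the local to the global statement I would establish the announced a priori bound on every interval of existence. The $L^2$-layer is free: the energy identity (\ref{e116}) applied with $f=-a(u)u_x$, whose contribution $-2\int_0^t\int_\R a(u)u_x\,u\,dx\,ds$ vanishes, together with $b\in H^1(\R)\hookrightarrow L^\infty(\R)$ and Gronwall, yields $\|u\|_{0,T}\leq C(\|u_0\|_2,T)$. Differentiating (\ref{e1}) in time, $v=u_t$ solves the linearized problem (\ref{ee105}); an energy estimate for $v$, after integrating the nonlinear contribution by parts and using Gagliardo--Nirenberg (\ref{e87}) and (\ref{e6}) to absorb the top-order dissipation, should give $\|u_t\|_{0,T}\leq C(\|u_0\|_2,T)\|u_0\|_{H^3(\R)}$. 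Feeding these two bounds into the elliptic recovery estimate (\ref{e88}) and into the Step~4 computation (\ref{e90'})--(\ref{e93}), exactly as in Theorem \ref{prop3}, then produces $\|u\|_{3,T}\leq \eta_3(\|u_0\|_2)\|u_0\|_{H^3(\R)}$. Since the length $\theta$ of the interval furnished by the fixed point depends only on $\|u_0\|_{H^3(\R)}$ and $\|b\|_{H^1(\R)}$, this a priori bound keeps the $H^3$-norm from blowing up in finite time, and a standard continuation argument extends $u$ to an arbitrary $[0,T]$.

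The main obstacle is exactly the a priori control of $v=u_t$ in $B_{0,T}$ when $p>2$. Multiplying the $v$-equation by $v$ and integrating by parts turns the nonlinear term into $-\tfrac12\int_\R a'(u)u_x v^2\,dx$, whose estimate produces a Gronwall coefficient essentially of size $(1+\|u\|_\infty^{p-1})\|u_x\|_\infty$. The square-integrability in time of $\|u_x\|_2$ coming from (\ref{e116}) is by itself enough to make this coefficient integrable only for moderate $p$; for larger $p$ one is forced to exploit the parabolic smoothing of the Burgers term $-u_{xx}$ --- i.e. the $L^2(0,T;H^{s+1})$ gain built into the spaces $B_{s,T}$ --- interpolating $\|u_x\|_\infty\leq C\|u_x\|_2^{1/2}\|u_{xx}\|_2^{1/2}$ and running the $H^1$- and $H^2$-level estimates in tandem so that the higher-order dissipation, up to the full $L^2(0,T;H^4(\R))$ regularity, absorbs the supercritical powers. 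Making this bootstrap close for every $p\geq 2$, uniformly on finite time intervals, is the delicate point; the high-regularity setting $H^3(\R)\hookrightarrow C^2(\R)$ is what ultimately makes it possible.
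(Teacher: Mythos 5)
Your local construction coincides, step for step, with the paper's own proof. The paper runs the Banach fixed point on the set $S_{\theta,R}=\{(u,u_t)\in B_{3,\theta}\times B_{0,\theta}:\|u\|_{3,\theta}+\|u_t\|_{0,\theta}\le R\}$, which is exactly your ball in $Y_\theta$; it verifies via Lemma \ref{lm3}(i)--(ii) that the source $-a(w)w_x$ lies in $W^{1,1}(0,\theta;L^2(\R))$ with $x$-derivative in $L^2(0,\theta;L^2(\R))$, invokes Proposition \ref{prop8}, chooses $R=2C_T\|u_0\|_{H^3(\R)}$, and closes the contraction with the mean value theorem and the growth conditions (\ref{e14}). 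One attribution slip: the place where the paper explicitly needs the bound on $a''$ is the $L^2(0,\theta;L^2(\R))$ estimate of $[a(w)w_x-a(u)u_x]_x$, namely the term $[a'(w)-a'(u)]w_x^2$ in (\ref{e31})--(\ref{e32}), i.e.\ the $f_x$-input of (\ref{ecc1}), rather than the $W^{1,1}$-input of (\ref{ecc2}); your remark is still defensible, since a fully rigorous time-differentiation of the difference in (\ref{e26}) also produces $a'(w)-a'(u)$, but it is worth knowing where the paper actually uses it.

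Where you diverge from the paper is the passage from $\theta$ to $T$, and there your candor exposes the real issue. The paper disposes of this step in one sentence: estimate (\ref{e34}) ``implies the solution does not blow-up in finite time'' and ``standard arguments'' extend $\theta$ to $T$, with $\eta_3(s)=2C_T$. That is not a continuation argument: (\ref{e34}) is a local existence estimate, not an a priori bound, and iterating it multiplies the $H^3$-bound by $2C_T$ at each step while the admissible step length, forced by the contraction condition $K_4<\tfrac12$, shrinks like $\theta_n\lesssim R_n^{-2p}$ with $R_n=(2C_T)^nR_0$; the resulting lifespans form a convergent geometric series which need not reach $T$. Your proposed repair --- the $L^2$ identity, an energy estimate for $v=u_t$ through (\ref{ee105}), then the elliptic recovery of Steps 3--4 of Theorem \ref{prop3} --- is the honest route, but, as you yourself concede, it does not close for $p>2$: the Gronwall coefficient involves $(1+\|u\|_\infty^{p-1})\|u_x\|_\infty$, the $H^1$-level inequality takes the form $y'\le Cy+C'y^{1+p/2}$ for $y(t)=\|u_x(t)\|_2^2$ with only $\int_0^T y\,dt<\infty$ supplied by (\ref{e116}), and for $p>2$ the Burgers dissipation no longer absorbs the nonlinearity for large data. (For $1\le p<2$ the paper escapes this through Proposition \ref{prop4}, whose proof rests on Lemmas \ref{lm1} and \ref{lm2} with H\"older exponents $\tfrac{2-p}{4}$, and is therefore unavailable once $p\ge2$.) So, measured against the paper's proof, your attempt reproduces everything that is actually proved there and is more forthright about the rest; but the ``delicate point'' you flag is not a deferrable technicality --- it is precisely the step that neither your proposal nor the paper's own argument completes, and without it the global assertion of Theorem \ref{teo5} for all $p\ge2$ remains unjustified.
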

-
\begin{proof}
Let $0<\theta \leq T$ and $R>0$ to be a constant to be determined later. Consider
\begin{equation*}\label{e16}
S_{\theta, R}:=\left\lbrace (u,u_t) \in B_{3,\theta}\times B_{0,\theta}:\|(u,u_t)\|_{B_{3,\theta}\times B_{0,\theta}}:= \|u\|_{{3,\theta}}+\|u_t\|_{ {0,\theta}} \leq R\right\rbrace.
\end{equation*}
Then, for each $(u,u_t) \in S_{\theta,R}\subset B_{3,\theta}\times B_{0,\theta}$, consider the problems
\begin{equation}\label{e17}
\left\lbrace \begin{tabular}{l}
$v_t = A_bv - a(u)u_x$ \\
$v(0)=u_0$
\end{tabular} \right.
\end{equation}
and
\begin{equation}\label{e18}
\left\lbrace \begin{tabular}{l}
$z_t = A_bz - [a(u)u_x]_x$ \\
$z(0)=z_0,$
\end{tabular} \right.
\end{equation}
with $z_0=-u_{0xxx}+u_{0xx}-bu_0-a(u_0)u_{0x} \in L^2(\R)$ and $A_bv=\partial_x^2v-\partial_x^3v-bv$. Recall that $A_b$ generates a strongly continuous semigroup $\{S(t)\}_{t\geq 0}$ of contractions in $L^2(\R)$. Moreover, by Lemma \ref{lm3} (i) and (ii), $a(u)u_x \in W^{1,1}(0,\theta;L^2(\R))$ and $[a(u)u_x]_x \in L^2(0,\theta;L^2(\R))$. Then, by Proposition \ref{prop8} the problems (\ref{e17}) and (\ref{e18}) have a unique mild solution $v$, such that $(v,v_t) \in B_{3,\theta}\times B_{0,\theta}$ and
\begin{equation}\label{e19}
\|(v,v_t)\|_{B_{3,\theta}\times B_{0,\theta}}\leq C_{\theta} \{ \|u_0\|_{H^3(\R)}+\|a(u)u_x\|_{W^{1,1}(0,\theta;L^2(\R))}+\|[a(u)u_x]_x\|_{L^2(0,\theta;L^2(\R))} \},
\end{equation}
where $C_{\theta}=2e^{\theta\|b\|_{\infty}}$. Thus, we can define the operator
\[
\Gamma:S_{\theta,R} \subset B_{3,\theta}\times B_{0,\theta}\longrightarrow B_{3,\theta}\times B_{0,\theta} \quad \text{by} \quad \Gamma (u,u_t) = (v,v_t).
\]
Since $C_{\theta}\leq C_T$, from (\ref{e19}) and Lemma $\ref{lm3}$, we have
\begin{align*}
\|\Gamma(u,u_t)\|_{B_{3,\theta}\times B_{0,\theta}}&\leq C_{T} \|u_0\|_{H^3(\R)}+C_TC\theta^{\frac{1}{2}}\left\lbrace (\|u\|_{3,\theta}+\|u_t\|_{0,\theta})+\|u\|_{3,\theta}^{p}(\|u\|_{3,\theta}+\|u_t\|_{0,\theta})+ \|u_t\|_{0,\theta}\|u\|_{3,\theta}\right. \\
&\left.+\|u_t\|_{0,\theta}\|u\|_{3,\theta}^{p}\right\rbrace + C_TC\theta^{\frac{1}{2}}\left\lbrace \|u\|_{3,\theta}^{2}+ 2\|u\|_{3,\theta}^{p+1}+\|u\|_{3,T}\right\rbrace \\
&\leq C_{T} \|u_0\|_{H^3(\R)} + C_TC\theta^{\frac{1}{2}}\left\lbrace 4R^{p+1}+2R^2+2R\right\rbrace.
\end{align*}
Choosing $R=2C_T\|u_0\|_{H^3(\R)}$, it follows that
\begin{equation*}\label{e21}
\|\Gamma(u,u_t)\|_{B_{3,\theta}\times B_{0,\theta}}\leq \left(K_1 +\frac{1}{2}\right)R,
\end{equation*}
where $K_1(\theta)= C_TC\theta^{\frac{1}{2}}\{4R^p+2R+2\}$. On the other hand, let $(u,u_t), (w,w_t) \in S_{\theta, R}$ and note that $\Gamma (u,u_t) - \Gamma (w,w_t)$ is solutions of
\begin{equation*}\label{e22}
\left\lbrace \begin{tabular}{l}
$v_t = A_bv +[a(w)w_x- a(u)u_x]$ \\
$v(0)=0$
\end{tabular} \right.
\end{equation*}
and
\begin{equation*}\label{e23}
\left\lbrace \begin{tabular}{l}
$z_t = A_bz + [a(w)w_x-a(u)u_x]_x$ \\
$z(0)=0.$
\end{tabular} \right.
\end{equation*}
Hence, from Lemma $\ref{lm3}$, the following estimate holds
\begin{align}\label{e24}
\|\Gamma(u,u_t)-\Gamma(w,w_t)\|_{B_{3,\theta}\times B_{0,\theta}} &\leq C_{T} \{\|a(w)w_x-a(u)u_x\|_{W^{1,1}(0,\theta;L^2(\R))}+\|[a(w)w_x-a(u)u_x]_x\|_{L^2(0,\theta;L^2(\R))} \}.
\end{align}
The next steps are devoted to estimate the terms on the right hand side of (\ref{e24}):
\begin{align*}
\|a(w)w_x-a(u)u_x\|_{W^{1,1}(0,\theta;L^2(\R))} &\leq \|(a(w)-a(u))w_x\|_{W^{1,1}(0,\theta;L^2(\R))}+\|a(u)(w-u)_x\|_{W^{1,1}(0,\theta;L^2(\R))}.
\end{align*}
By using the Mean Valued Theorem and Lemma \ref{lm3}, we have
\begin{align}\label{e26}
\|a(w)w_x-a(u)u_x\|_{W^{1,1}(0,\theta;L^2)} &\leq C\|(1+|u|^{p-1}+|w|^{p-1})|w-u|w_x\|_{W^{1,1}(0,\theta;L^2(\R))} \notag\\
&+\|a(u)(w-u)_x\|_{W^{1,1}(0,\theta;L^2(\R))} \notag\\
&\left.+\||w|^{p-1}|w-u|w_x\|_{W^{1,1}(0,\theta;L^2(\R))}\right\rbrace+\|a(u)(w-u)_x\|_{W^{1,1}(0,\theta;L^2(\R))} \notag\\
&\leq C\left\lbrace  2^{\frac{1}{2}}\theta^{\frac{1}{4}}\left\lbrace \|w-u\|_{3,\theta}\|w\|_{3,\theta}+ \|(w-u)_t\|_{0,\theta}\|w\|_{3,\theta}+\|w-u\|_{3,\theta}\|w_t\|_{0,\theta}\right\rbrace \right. \notag\\
&+ \theta^{\frac{1}{2}}\{\|w-u\|_{3,\theta}\|u\|_{3,\theta}^{p-1}(\|w\|_{3,\theta}+\|w_t\|_{0,\theta})+\|(w-u)_t\|_{0,\theta}\|w\|_{3,\theta}\|u\|_{3,\theta}^{p-1} \notag\\
&+ (p-1)\|u_t\|_{0,\theta}\|w-u\|_{3,\theta}\|u\|_{3,\theta}^{p-2}\|w\|_{3,\theta}\} \notag \\
&+ \theta^{\frac{1}{2}}\{ \|w-u\|_{3,\theta}\|w\|_{3,\theta}^{p-1}(\|w\|_{3,\theta}+\|w_t\|_{0,\theta}) \notag \\
&\left.+\|w\|_{3,\theta}^p\|[w-u]_t\|_{0,\theta}+(p-1)\|w-u\|_{3,\theta}\|w\|_{3,\theta}^{p-1}\|w_t\|_{0,\theta}\}\right\rbrace \notag\\
&+C\theta^{\frac{1}{2}} \left\lbrace (\|w-u\|_{3,\theta}+\|(w-u)_t\|_{0,\theta}) +\|u\|_{3,\theta}^{p}(\|w-u\|_{3,\theta}+\|(w-u)_t\|_{0,\theta}) \right. \notag \\
&\left.+\|u_t\|_{0,\theta}\|w-u\|_{3,\theta}+\|u_t\|_{0,\theta}\|w-u\|_{3,\theta}\|u\|_{3,\theta}^{p-1}\right\rbrace \notag\\
&\leq K_2 \|(w-u,(w-u)_t)\|_{B_{3,\theta}\times B_{0,\theta}}
\end{align}
where $K_2(\theta)=C\left\lbrace (2^{\frac{3}{2}}\theta^{\frac{1}{4}}+\theta^{\frac{1}{2}})R+2(p+1)\theta^{\frac{1}{2}}R^p+\theta^{\frac{1	 }{2}}\right\rbrace$. To estimate the second term, note that
\begin{align}\label{e31}
[a(w)w_x-a(u)u_x]_x =[a'(w)-a'(u)]w_x^2+a'(u)[w-u]_x[w+u]_x+[a(w)-a(u)]w_{xx}+a(u)[w_{xx}-u_{xx}]
\end{align}
Then, from the Mean Valued Theorem, (\ref{e25}) and (\ref{e14}), we have the following estimates:
\begin{align*}
 \|[a'(w)-a'(u)]w_x^2\|_{L^2(0,\theta;L^2(\R))} &\leq C  \|[1+|w|^{p-2}+|u|^{p-2}]|w-u|w_x^2\|_{L^2(0,\theta;L^2(\R))} \\
&\leq C\{ \||w-u|w_x^2\|_{L^2(0,\theta;L^2(\R))} + \||w|^{p-2}|w-u|w_x^2\|_{L^2(0,\theta;L^2(\R))} \\
&+\||u|^{p-2}|w-u|w_x^2\|_{L^2(0,\theta;L^2(\R))} \}\\
&\leq C\theta^{\frac{1}{2}}\left\lbrace \|w\|_{3,\theta}^2\|w-u\|_{3,\theta}+\|w\|_{3,\theta}^{p}\|w-u\|_{3,\theta}+\|u\|_{3,\theta}^{p-2}\|w\|_{3,\theta}^2\|w-u\|_{3,\theta} \right\rbrace,
\end{align*}
\begin{align*}
\|a'(u)[w-u]_x[w+u]_x\|_{L^2(0,\theta;L^2(\R))} &\leq \|a'(u)[w-u]_xw_x\|_{L^2(0,\theta;L^2(\R))}+\|a'(u)[w-u]_xu_x\|_{L^2(0,\theta;L^2(\R))} \\
&\leq C\{ \|[w-u]_xw_x\|_{L^2(0,\theta;L^2(\R))}+\|[w-u]_xu_x\|_{L^2(0,\theta;L^2(\R))}  \\
&+\|u|^{p-1}|[w-u]_xw_x\|_{L^2(0,\theta;L^2(\R))}+\||u|^{p-1}[w-u]_xu_x\|_{L^2(0,\theta;L^2(\R))} \}  \\
&\leq C\theta^{\frac{1}{2}}\left\lbrace  \|w\|_{3,\theta}\|w-u\|_{3,\theta}+  \|u\|_{3,\theta}\|w-u\|_{3,\theta}  \right. \\
&\left. + \|u\|_{3,\theta}^{p-1}\|w\|_{3,\theta}\|w-u\|_{3,\theta}+\|u\|_{3,\theta}^{p}\|w-u\|_{3,\theta} \right\rbrace
\end{align*}
and
\begin{align*}
\|[a(w)-a(u)]w_{xx}\|_{L^2(0,\theta;L^2(\R))} \leq C\theta^{\frac{1}{2}}\left\lbrace  \|w\|_{3,\theta}\|w-u\|_{3,\theta}+ \|u\|_{3,\theta}^{p-1}\|w\|_{3,\theta}\|w-u\|_{3,\theta}+\|w\|_{3,\theta}^{p}\|w-u\|_{3,\theta} \right\rbrace
\end{align*}
\begin{align*}
\|a(u)[w_{xx}-u_{xx}]\|_{L^2(0,\theta;L^2(\R))} \leq C\theta^{\frac{1}{2}}\left\lbrace \|w-u\|_{3,\theta} + \|u\|_{3,\theta}^{p}\|w-u\|_{3,\theta} \right\rbrace.
\end{align*}
The above estimates and (\ref{e31}), show that
\begin{align}\label{e32}
\|[a(w)w_x-a(u)u_x]_x\|_{L^2(0,\theta;L^2(\R))} &\leq C\theta^{\frac{1}{2}}\left\lbrace 2\|w\|_{3,\theta}^{p}+ \|w\|_{3,\theta}^{2}+2 \|w\|_{3,\theta}+2 \|u\|_{3,\theta}^{p}+\|u\|_{3,\theta}\right. \notag \\
&\left.+2\|u\|_{3,\theta}^{p-1}\|w\|_{3,\theta}+\|u\|_{3,\theta}^{p-2}\|w\|_{3,\theta}^2+1\right\rbrace \|w-u\|_{3,\theta} \notag \\
&\leq K_3 \|(w-u,[w-u]_t)\|_{B_{3,\theta}\times B_{0,\theta}},
\end{align}
where $K_3(\theta) = C\theta ^{\frac{1}{2}}\left\lbrace 7R^p+R^2+3R +1 \right\rbrace$. From (\ref{e24}), (\ref{e26}) and (\ref{e32}), we get
\begin{equation*}
\|\Gamma(u,u_t)-\Gamma(w,w_t)\|_{B_{3,\theta}\times B_{0,\theta}} \leq K_4  \|(w-u,[w-u]_t)\|_{B_{3,\theta}\times B_{0,\theta}},
\end{equation*}
where $K_4 = C_T(K_2+K_3)=C_TC\theta^{\frac{1}{2}}\left\lbrace 2(p+9)R^p+R^{p-1}+R^2+4R+2\right\rbrace+2^{\frac{3}{2}}C_TC\theta^{\frac{1}{4}}R$. Note that $K_1 \leq K_4$, therefore choosing $\theta >0$, such that $K_4<\frac{1}{2}$, it follows that
\begin{equation*}\label{e33}
\left\lbrace \begin{tabular}{l}
$\|\Gamma(u,u_t)\|_{B_{3,\theta}\times B_{0,\theta}} \leq R$ \\
$\|\Gamma(u,u_t)-\Gamma(w,w_t)\|_{B_{3,\theta}\times B_{0,\theta}} \leq \frac{1}{2}\|(w-u,[w-u]_t)\|_{B_{3,\theta}\times B_{0,\theta}}$
\end{tabular}\right., \quad \forall (u,u_t),(w,w_t) \in S_{\theta,R} \subset B_{3,\theta}\times B_{0,\theta}.
\end{equation*}
Hence $\Gamma : S_{\theta,R}\longrightarrow S_{\theta,R}$ is a contraction and, by Banach fixed point theorem, we obtain a unique $(u,u_t) \in S_{\theta,R}$, such that $\Gamma (u,u_t)=(u,u_t)$. Thus, $u$ is a unique local mild solution to problem (\ref{e1}) and satisfies
\begin{equation}\label{e34}
\|u\|_{3,\theta} \leq 2C_T\|u_0\|_{H^3(\R)}.
\end{equation}
Moreover, (\ref{e34}) implies the solution does not blow-up in finite time and, by using standards arguments we can extent $\theta$ to $T$. Finall, the proof is complete defining $\eta_3(s)=2C_T$.
\end{proof}

\section{Exponencial stability}

This section is devoted to prove the exponential decay of the solutions under the assumptions (\ref{hyp b}) and (\ref{hyp a}). We consider two cases: $1 \leq p < 2$ and $2\leq p < 5$.
\subsection{\texorpdfstring{Case $1 \leq p <2$.}{Case 1}}
In order to make our work self-contained, we prove the following proposition which is simliar to Theorem 5.1 in \cite{cavalcanti2014global}.

\begin{prop}\label{prop6}
Let $b$ satisfying (\ref{hyp b}). Then, for any $u_0 \in L^2(\R)$ and $1\leq p < 2$, the corresponding solution $u$ of (\ref{e1}) is exponential stable and satisfies the decay estimate
\begin{equation}\label{e119}
\|u(t)\|_{2}\leq e^{-2\lambda_0 t}\|u_0\|_2,\quad \forall t\geq 0.
\end{equation}
\end{prop}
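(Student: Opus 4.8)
The plan is to convert the global energy identity \eqref{e118} of Theorem \ref{teo1} into a differential inequality for $\|u(t)\|_2^2$ and then close it by Gronwall's lemma. Since the corresponding solution is regular for positive times by Corollary \ref{cor1} (so that \eqref{e118} may be differentiated, or equivalently one argues directly on the integral form and lets the initial time tend to $0$), differentiating \eqref{e118} gives
\begin{equation*}
\frac{d}{dt}\|u(t)\|_2^2 = -2\|u_x(t)\|_2^2 - 2\int_{\R} b(x)u^2(x,t)\,dx .
\end{equation*}
Inserting the lower bound $b \ge \lambda_0 + \lambda_1$ from \eqref{hyp b}, so that $\int_{\R} b u^2 \ge \lambda_0\|u\|_2^2 + \int_{\R}\lambda_1 u^2$, the whole matter reduces to dominating the sign-indefinite term $\int_{\R}\lambda_1 u^2$ by the available dissipation $\|u_x\|_2^2$ together with a strictly controlled multiple of $\lambda_0\|u\|_2^2$.

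First I would estimate the indefinite term by H\"older's inequality with exponents $p$ and $p'=p/(p-1)$, obtaining $\left|\int_{\R}\lambda_1 u^2\right| \le \|\lambda_1\|_{L^p}\|u\|_{L^{2p'}}^2$. Interpolating through $L^\infty$ by means of \eqref{e6}, namely $\|u\|_{L^{2p'}}^{2p'} \le \|u\|_\infty^{2p'-2}\|u\|_2^2 \le (2\|u\|_2\|u_x\|_2)^{p'-1}\|u\|_2^2$, and using $p'=p/(p-1)$ to simplify the exponents, this leads to
\begin{equation*}
\left|\int_{\R}\lambda_1 u^2\right| \le 2^{\frac1p}\|\lambda_1\|_{L^p}\,\|u\|_2^{2-\frac1p}\,\|u_x\|_2^{\frac1p}.
\end{equation*}
The key step is then a weighted Young inequality tailored to absorb the factor $\|u_x\|_2^{1/p}$ into the full dissipation $\|u_x\|_2^2$: writing $\|u\|_2^{2-1/p}\|u_x\|_2^{1/p}=(\|u\|_2^2)^{1-\frac{1}{2p}}(\|u_x\|_2^2)^{\frac{1}{2p}}$ and applying Young with the conjugate exponents $\frac{2p}{2p-1}$ and $2p$, one gets for each value of the free parameter a bound whose $\|u_x\|_2^2$-coefficient can be chosen to match $2$ exactly. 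This choice is precisely what manufactures the sharp constant $c_p$ appearing in \eqref{hyp b}.

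Carrying out this optimization, the $\|u_x\|_2^2$ contributions cancel against the dissipation and there remains
\begin{equation*}
\frac{d}{dt}\|u(t)\|_2^2 \le -2\Big(\lambda_0 - c_p\,\|\lambda_1\|_{L^p}^{\frac{2p}{2p-1}}\Big)\|u(t)\|_2^2 .
\end{equation*}
Now the hypothesis $\|\lambda_1\|_{L^p} < \left(\frac{\lambda_0}{c_p}\right)^{1-\frac{1}{2p}}$ is exactly equivalent to $c_p\|\lambda_1\|_{L^p}^{\,2p/(2p-1)} < \lambda_0$, so the bracketed coefficient is strictly positive. Integrating this differential inequality with Gronwall's lemma then produces the exponential decay estimate \eqref{e119}.

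I expect the genuine obstacle to be the treatment of the indefinite term $\int_{\R}\lambda_1 u^2$: because there is no Poincar\'e inequality on the whole line, the $\|u\|_2^2$ that Young's inequality releases cannot be absorbed by the dissipation and must instead be strictly dominated by $\lambda_0\|u\|_2^2$, which is exactly the reason the smallness condition on $\|\lambda_1\|_{L^p}$ with the sharp constant $c_p$ is indispensable. Making the Young parameter \emph{optimal}, rather than merely admissible, so that the threshold matches \eqref{hyp b} precisely, is the delicate quantitative point. A secondary, more routine issue is justifying the passage from the integral identity \eqref{e118} to the pointwise differential inequality, for which I would rely on the smoothing provided by Corollary \ref{cor1} on $[\varepsilon,T]$ followed by the limit $\varepsilon\downarrow 0$.
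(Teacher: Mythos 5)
Your strategy coincides with the paper's own: the paper likewise derives the $L^2$ energy identity (multiplying by $u$ for $H^3$ data and then passing to general $u_0\in L^2(\R)$ by density), splits the damping according to (\ref{hyp b}), and absorbs the indefinite term into the dissipation; the only structural difference is that the paper outsources the H\"{o}lder--interpolation--Young computation to Theorem 5.1 of Cavalcanti et al., whereas you carry it out explicitly. Your computation is correct: H\"{o}lder with exponents $p,p'$, the interpolation (\ref{e6}), and Young's inequality with conjugate exponents $\tfrac{2p}{2p-1}$ and $2p$, with the free parameter chosen so that the gradient term is absorbed exactly, give
\[
2\left|\int_\R \lambda_1 u^2\,dx\right| \le 2\|u_x\|_2^2 + 2c_p\|\lambda_1\|_{L^p(\R)}^{\frac{2p}{2p-1}}\|u\|_2^2 ,
\]
with precisely the constant $c_p$ of (\ref{hyp b}). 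Your preliminary worry about differentiating (\ref{e118}) is also easily dispatched: since $u\in L^2(0,T;H^1(\R))$ and $b\in H^1(\R)\hookrightarrow L^\infty(\R)$, the identity (\ref{e118}) already exhibits $t\mapsto\|u(t)\|_2^2$ as an absolutely continuous function with a.e. derivative $-2\|u_x(t)\|_2^2-2\int_\R bu^2\,dx$, so Gronwall applies directly and no appeal to Corollary \ref{cor1} is needed; the paper instead works with $H^3$ data and concludes by density, which is equivalent.

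The genuine problem is your final sentence. The differential inequality you obtain,
\[
\frac{d}{dt}\|u(t)\|_2^2 \le -2\Bigl(\lambda_0 - c_p\|\lambda_1\|_{L^p(\R)}^{\frac{2p}{2p-1}}\Bigr)\|u(t)\|_2^2 ,
\]
integrates to $\|u(t)\|_2\le e^{-\gamma t}\|u_0\|_2$ with $\gamma=\lambda_0 - c_p\|\lambda_1\|_{L^p(\R)}^{2p/(2p-1)}$, and this is \emph{not} (\ref{e119}): the rate $\gamma$ is at most $\lambda_0$ (strictly smaller whenever $\lambda_1\not\equiv 0$), in particular well below the claimed $2\lambda_0$. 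No variant of this argument can reach the rate $2\lambda_0$: even in the most favorable case $\lambda_1\equiv 0$ the energy method yields $\frac{d}{dt}\|u\|_2^2\le -2\lambda_0\|u\|_2^2$, i.e. $\|u(t)\|_2\le e^{-\lambda_0 t}\|u_0\|_2$, still a factor of two short in the exponent. The rate $2\lambda_0$ in (\ref{e119}) is evidently carried over from the squared-norm formulation $\|u(t)\|_2^2\le e^{-2\gamma t}\|u_0\|_2^2$; the same overstatement is implicit in the paper's own proof, which simply cites the reference for this step. So what your argument actually proves --- and what this method can prove --- is exponential decay at the rate $\lambda_0 - c_p\|\lambda_1\|_{L^p}^{2p/(2p-1)}$; presenting the last Gronwall step as ``producing (\ref{e119})'' asserts a quantitative claim your inequality does not support, and a correct write-up must either weaken the stated estimate to the rate actually obtained or supply an argument (not available here) for the stronger one.
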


\begin{proof}
We first consider $u_0  \in H^3(\R)$ and $u$ corresponding smooth solution. Multiplying the equation in (\ref{e1}) by $u$ and integrating in $\R$, we have
\begin{equation*}
\frac{d}{dt}\|u(t)\|_2^2+2\|u_x(t)\|^2_2=-2\int_{\R}b(x)|u(x,t)|^2dx.
\end{equation*}
Hence, Proceeding as in \cite[Theorem 5.1]{cavalcanti2014global}, we obtain
\begin{equation*}
\|u(t)\|_{2}\leq e^{-2\lambda_0 t}\|u_0\|_2.
\end{equation*}
Now, let $u_0 \in L^2(\R)$ and $u$ the corresponding mild solution given by Theorem \ref{teo1}. Consider $\{u_{n,0}\} \in H^3(\R)$, such that
\begin{equation*}
u_{n,0} \rightarrow u_0 \quad \text{in $L^2(\R)$}.
\end{equation*}
Then, the corresponding strong solutions $u_n$ satisfy the estimate
\begin{equation}\label{e120}
\|u_n(t)\|_{2}\leq e^{-2\lambda_0 t}\|u_{n,0}\|_2.
\end{equation}
On the other hand, note that the identity (\ref{e118}) in Theorem \ref{teo1} implies that
\begin{equation*}
u_n \rightarrow u \quad \text{in $L^2(\R)$}.
\end{equation*}
Taking the limit in (\ref{e120}), we obtain (\ref{e119}).
\end{proof}

\begin{cor}\label{cor2}
Let $T >0$, $u_0 \in L^2(\R)$ and $b$  satisfying (\ref{hyp b}). Then, there exist a nondecreasing continuous function $\alpha_0:\R^+ \rightarrow \R^+$, such that the corresponding solution $u$ of problem (\ref{e1}) with $1\leq p< 2$ satisfies
\begin{equation*}\label{e121}
\|u\|_{0,[t,t+T]}\leq \alpha_0(\|u_0\|_2)e^{-2\lambda_0t}, \quad \forall t\geq 0.
\end{equation*}
\end{cor}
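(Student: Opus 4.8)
The plan is to exploit the autonomy of \eqref{e1} together with the two estimates already at hand: the global bound from Theorem \ref{teo1} and the $L^2$-decay from Proposition \ref{prop6}. Since the coefficients $a$ and $b$ in \eqref{e1} do not depend on time, the equation is invariant under time translation. Concretely, if $u=\A u_0$ is the global mild solution, then for each fixed $t\geq 0$ the function $s\mapsto u(t+s)$ is a mild solution of \eqref{e1} on $[0,T]$ with initial datum $u(t)\in L^2(\R)$; by the uniqueness part of Theorem \ref{teo1} it coincides with $\A\big(u(t)\big)$. The first step is therefore to record the identity of norms
\begin{equation*}
\|u\|_{0,[t,t+T]}=\big\|\A\big(u(t)\big)\big\|_{0,T},
\end{equation*}
which is immediate from the definition of $B_{0,[t,t+T]}$ after the change of variable $s\mapsto t+s$ in both the supremum and the $L^2(0,T;H^1(\R))$ integral.

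Having reduced the localized-in-time norm to a norm over $[0,T]$, I would apply Theorem \ref{teo1} directly to the datum $u(t)$, which gives
\begin{equation*}
\|u\|_{0,[t,t+T]}=\big\|\A\big(u(t)\big)\big\|_{0,T}\leq \beta_0\big(\|u(t)\|_2\big)\,\|u(t)\|_2 .
\end{equation*}
Next, Proposition \ref{prop6} furnishes the exponential bound $\|u(t)\|_2\leq e^{-2\lambda_0 t}\|u_0\|_2$. Since $\lambda_0>0$ and $t\geq 0$ force $e^{-2\lambda_0 t}\leq 1$, we have in particular $\|u(t)\|_2\leq\|u_0\|_2$, and the monotonicity of $\beta_0$ then yields $\beta_0\big(\|u(t)\|_2\big)\leq \beta_0\big(\|u_0\|_2\big)$. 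Combining the last two displays,
\begin{equation*}
\|u\|_{0,[t,t+T]}\leq \beta_0\big(\|u_0\|_2\big)\,e^{-2\lambda_0 t}\|u_0\|_2 .
\end{equation*}

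Finally, I would set $\alpha_0(s):=\beta_0(s)\,s$, which is nondecreasing and continuous on $\R^+$ because $\beta_0$ is nonnegative, nondecreasing and continuous, obtaining $\|u\|_{0,[t,t+T]}\leq \alpha_0(\|u_0\|_2)e^{-2\lambda_0 t}$ for all $t\geq0$, as claimed. The only point requiring a little care is the time-translation step: one must check that $u(t)$ is a legitimate $L^2$ initial datum---guaranteed by $u\in C([0,\infty);L^2(\R))$---and invoke uniqueness to identify the restarted trajectory with $\A\big(u(t)\big)$. Everything else is a direct concatenation of Theorem \ref{teo1} and Proposition \ref{prop6}, so I do not anticipate any genuine obstacle beyond this standard semigroup-type bookkeeping.
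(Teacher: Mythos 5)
Your proposal is correct and follows essentially the same route as the paper: restart the equation at time $t$ with datum $u(t)$ (justified by autonomy and uniqueness), apply the global well-posedness estimate $\|u\|_{0,[t,t+T]}\leq \beta_0(\|u(t)\|_2)\|u(t)\|_2$, insert the decay bound of Proposition \ref{prop6}, use monotonicity of $\beta_0$, and set $\alpha_0(s)=\beta_0(s)s$. The only cosmetic difference is that you invoke Theorem \ref{teo1} while the paper cites Theorem \ref{teo2} (its $s=0$ case gives the same estimate), and you spell out the uniqueness/translation bookkeeping that the paper leaves implicit.
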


\begin{proof}
Note that, after a change of variable, the restriction of $u$ to $[t,t+T]$ is a solution of problem (\ref{e1}) with respect to the initial data $u(t)$. Then, by Theorem \ref{teo2} and Proposition \ref{prop6} we have
\begin{equation*}
\|u\|_{0,[t,t+T]}\leq \beta_0(\|u(t)\|_2)\|u(t)\|_2 \leq \beta_0(e^{-2\lambda_0 t}\|u_0\|_2)\|u_0\|_2 e^{-2\lambda_0 t} \leq \alpha_0(\|u_0\|_2)e^{-2\lambda_0 t},
\end{equation*}
where $\alpha_0(s)=\beta_0(s)s$.
\end{proof}

The next result was inspired by the ideas introduced in the proof of Theorem 6.1 in \cite{cavalcanti2014global} and Proposition 3.9 in \cite{rosier2006global}.

\begin{prop}\label{prop7}
Let $T>0$,  $1\leq p < 2$, $a(0)=0$ and $b$ satisfying (\ref{hyp b}).  Then, there exist $\gamma>0$, $T_0>0$ and a nonnegative continuous  function $\alpha_3: \R^+ \rightarrow \R^+$, such that, for every $u_0 \in H^3(\R)$, the corresponding solution $u$ satisfies
\begin{equation}\label{e129}
\|u(t)\|_{H^3(\R)} \leq \alpha_3(\|u_0\|_2,T_0)\|u_0\|_{H^3(\R)}e^{-\gamma t}, \qquad \forall t\geq T_0.
\end{equation}
\end{prop}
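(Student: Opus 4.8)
The plan is to reduce the $H^3$-decay to an $L^2$-decay for the time derivative $v:=u_t$, which solves the linearized problem (\ref{ee105}), and then feed this back into the elliptic-type estimate already obtained in Step~3 of Theorem~\ref{prop3}. Since $u_0\in H^3(\R)$, Theorem~\ref{prop3} and Proposition~\ref{prop4} guarantee that $v=u_t$ is a genuine $B_{0,T}$-solution of $v_t+v_{xxx}-v_{xx}+[a(u)v]_x+bv=0$ with $v(0)=v_0=-\partial_x^3u_0+\partial_x^2u_0-a(u_0)\partial_xu_0-bu_0$, and $\|v_0\|_2\le C(\|u_0\|_2)\|u_0\|_{H^3(\R)}$ by (\ref{e127}). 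The central estimate (\ref{e88}), refined by absorbing the nonlinear term exactly as in Step~3, reads pointwise in $t$
\[
\|u(t)\|_{H^3(\R)}\le 2C_4\Big\{\|u_t(t)\|_2+(C_6+\|b\|_\infty)\|u(t)\|_2+C_5\|u(t)\|_2^{\frac{3p+2}{2-p}}\Big\}.
\]
Since Proposition~\ref{prop6} already supplies $\|u(t)\|_2\le e^{-2\lambda_0 t}\|u_0\|_2$, it suffices to prove that $\|u_t(t)\|_2=\|v(t)\|_2$ decays exponentially.

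To that end I would run the $L^2$-energy estimate for $v$. Multiplying the linearized equation by $v$, integrating by parts and using $\int_\R[a(u)v]_x\,v\,dx=\tfrac12\int_\R a'(u)u_xv^2\,dx$, one gets
\[
\frac{d}{dt}\|v\|_2^2+2\|v_x\|_2^2+2\int_\R b\,v^2\,dx=-\int_\R a'(u)u_xv^2\,dx.
\]
The damping term is handled as in Proposition~\ref{prop6}: hypothesis (\ref{hyp b}) gives $\int b v^2\ge\lambda_0\|v\|_2^2+\int\lambda_1 v^2$, and the smallness of $\|\lambda_1\|_{L^p}$ allows $\int\lambda_1 v^2$ to be absorbed into part of the dissipation $\|v_x\|_2^2$ and a fraction of $\lambda_0\|v\|_2^2$, leaving a fixed positive margin $-2\gamma_0\|v\|_2^2$ with $\gamma_0>0$. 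The genuinely new term is $\int a'(u)u_xv^2$; using $|a'(\mu)|\le C(1+|\mu|^{p-1})$, the Gagliardo--Nirenberg bound $\|v\|_4^2\le C\|v\|_2^{3/2}\|v_x\|_2^{1/2}$, and $\|u\|_\infty^2\le 2\|u\|_2\|u_x\|_2$, Young's inequality bounds it by $\tfrac12\|v_x\|_2^2+g(t)\|v\|_2^2$, with $g(t)\le C\big(\|u_x(t)\|_2^{4/3}+\|u_x(t)\|_2^{(2p+2)/3}\big)$ after controlling $\|u(t)\|_2$ by its (bounded, decaying) value.

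The crucial point, and the main obstacle, is precisely this perturbation term, which has no analogue in the $L^2$-estimate for $u$ itself, where $\int a(u)u_x\cdot u\,dx=0$. It cannot be made small pointwise in $t$, but its coefficient $g$ is integrable in time. Indeed, Corollary~\ref{cor2} yields $\int_t^{t+T}\|u_x\|_2^2\,ds\le\alpha_0(\|u_0\|_2)^2e^{-4\lambda_0 t}$; since $1\le p<2$ both exponents $\tfrac43$ and $\tfrac{2p+2}{3}$ are strictly less than $2$, a window-by-window Hölder estimate and summation give $\int_{T_0}^\infty g(s)\,ds\le G(\|u_0\|_2)<\infty$. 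Collecting the estimates produces $\frac{d}{dt}\|v\|_2^2\le-2\gamma_0\|v\|_2^2+g(t)\|v\|_2^2$, so Gronwall's lemma gives, for $t\ge T_0$,
\[
\|v(t)\|_2^2\le\|v(T_0)\|_2^2\exp\Big(\textstyle\int_{T_0}^\infty g\Big)e^{-2\gamma_0(t-T_0)}\le C(\|u_0\|_2)\,\|u_0\|_{H^3(\R)}^2\,e^{-2\gamma_0(t-T_0)},
\]
where $\|v(T_0)\|_2\le\|v\|_{0,T_0}\le\sigma(\cdot)\|v_0\|_2$ was bounded through (\ref{e86})--(\ref{e127}).

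Finally I would substitute this exponential bound for $\|u_t(t)\|_2$, together with $\|u(t)\|_2\le e^{-2\lambda_0 t}\|u_0\|_2$ (and the corresponding bound for the power term $\|u(t)\|_2^{(3p+2)/(2-p)}$), into the refined version of (\ref{e88}) displayed above. Setting $\gamma:=\min\{\gamma_0,2\lambda_0\}$ and gathering all $\|u_0\|_2$-dependent constants into a single nondecreasing continuous $\alpha_3$ gives (\ref{e129}) for $t\ge T_0$. The hypothesis $a(0)=0$ enters in ensuring that the nonlinear contributions $\|a(u)u_x\|_2$ are dominated by genuinely decaying quantities, so that no non-decaying remainder survives in the estimate.
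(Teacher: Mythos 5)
Your proof is correct in its overall strategy but takes a genuinely different route from the paper's. The paper also reduces the problem to exponential decay of $v=u_t$, but it does so via the Duhamel representation $v(t)=S(t)v_0-\int_0^tS(t-s)[a(u(s))v(s)]_x\,ds$: the semigroup part decays by the linear damping argument, the perturbation part is estimated through Lemma \ref{lm2} -- and this is precisely where $a(0)=0$ is needed, to write $|a(u)|\le C(1+|u|^{p-1})|u|$ and extract a factor $\|u\|_{0,T}$ -- and then a window-by-window iteration on $y_n=v(\cdot,nT)$, combined with Corollary \ref{cor2}, gives $\|y_{n+1}\|_2\le r\|y_n\|_2$ with $r<1$ for $n\ge N$, hence geometric decay. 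You instead run a direct $L^2$ energy/Gronwall argument on the linearized equation, using that the only new term $\tfrac12\int_\R a'(u)u_xv^2\,dx$ carries a coefficient $g(t)$ that is integrable on $(0,\infty)$ thanks to Corollary \ref{cor2} and $p<2$. Your computations (the identity $\int_\R[a(u)v]_xv\,dx=\tfrac12\int_\R a'(u)u_xv^2\,dx$, the Gagliardo--Nirenberg/Young bounds producing $g(t)\lesssim\|u_x\|_2^{4/3}+\|u_x\|_2^{(2p+2)/3}$, and the windowed H\"older summation giving $\int g<\infty$) are sound, and both proofs feed into the same elliptic-type recovery of the $H^3$ norm. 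Note that your route never actually uses $a(0)=0$: only $a'$ appears in your perturbation term, so your closing remark about where that hypothesis enters is inaccurate -- harmlessly so, since invoking an unused hypothesis is not a logical error; in fact your argument is slightly more general than the paper's on this point.

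There is, however, one quantitative step that needs repair: the splitting of the dissipation. You reserve the fixed amount $\tfrac12\|v_x\|_2^2$ for the nonlinear term and assert that $\int\lambda_1v^2$ can be absorbed into ``part of'' the remaining dissipation. But the threshold in (\ref{hyp b}) is calibrated so that absorbing $2\bigl|\int\lambda_1v^2\bigr|$ with a positive margin may require essentially the \emph{entire} dissipation $2\|v_x\|_2^2$: Young's inequality gives
\begin{equation*}
2\Bigl|\int_\R\lambda_1v^2\,dx\Bigr|\le 2\kappa\|v_x\|_2^2+2c_p\,\kappa^{-\frac{1}{2p-1}}\|\lambda_1\|_{L^p}^{\frac{2p}{2p-1}}\|v\|_2^2,
\end{equation*}
and when $\|\lambda_1\|_{L^p}$ is close to the bound $(\lambda_0/c_p)^{1-\frac{1}{2p}}$ the margin $\lambda_0-c_p\kappa^{-\frac{1}{2p-1}}\|\lambda_1\|_{L^p}^{\frac{2p}{2p-1}}$ is positive only for $\kappa$ close to $1$, which is incompatible with permanently setting aside $\tfrac12\|v_x\|_2^2$ (i.e.\ forcing $\kappa\le 3/4$). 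The fix is immediate: first choose $\kappa<1$ close enough to $1$ that $\gamma_0:=\lambda_0-c_p\kappa^{-\frac{1}{2p-1}}\|\lambda_1\|_{L^p}^{\frac{2p}{2p-1}}>0$ (possible by the strict inequality in (\ref{hyp b})), and then absorb the nonlinear term using only $2(1-\kappa)\|v_x\|_2^2$; this merely inflates the constant in $g(t)$, which is harmless because your Gronwall argument only needs $\int_0^\infty g<\infty$, not smallness of $g$. With this adjustment your proof goes through.
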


\begin{proof}
Let $v=u_t$. Then, by Proposition \ref{prop4} $v$ solves linearized equation (\ref{ee105}) with $v_0= -\partial_x^3u_0+\partial_x^2u_0-a(u_0)\partial_xu_0-bu_0$ and satisfies
\begin{equation}\label{e122}
\|v\|_{0,T}\leq \sigma(\|u\|_{0,T})\|v_0\|_2.
\end{equation}
After a change of variable, the restriction of $v$ to [t, t + T] is a solution of problem (\ref{ee105}) with respect to the initial data $v(t)$ and
\begin{equation*}
\|v\|_{0,[t,t+T]}\leq \sigma(\|u\|_{0,[t,t+T]})\|v(t)\|_2.
\end{equation*}
Applying Corollary \ref{cor2}, it follows that
\begin{equation}\label{e125}
\|v\|_{0,[t,t+T]}\leq \sigma(\alpha_0(\|u_0\|_2)e^{-2\lambda_0t})\|v(t)\|_2 \leq \sigma(\alpha_0(\|u_0\|_2))\|v(t)\|_2.
\end{equation}
On the other hand, the solution $v$ may be written as
\begin{equation*}
v(t)=S(t)v_0-\int_0^t S(t-s)[a(u(s))v(s)]_xds
\end{equation*}
where $S(t)$ is a $C_0$-semigroup of contraction in $L^2(\R)$ generated by the operator $A_b$. Note that $v_1(t)=S(t)v_0$ is solution of the problem (\ref{ee105}) with $a(u)=0$. Then, proceeding as in the proof of Proposition \ref{prop6}, we have
\begin{equation}\label{e123}
\|v_1(t)\|_2 \leq \|v_0\|e^{-2\lambda_0 t}, \qquad \forall t \geq 0.
\end{equation}
Let us now denote $v_2(t)=\int_0^t S(t-s)[a(u(s))v(s)]_xds$. Note that
\begin{align*}
 \|v_2(T)\|_2 &\leq \|a'(u)u_xv\|_{L^1(0,T;L^2(\R))}+ \|a(u)v_x\|_{L^1(0,T;L^2(\R))}.
\end{align*}
Moreover, $a(0)=0$ implies that $|a(u)| \leq C (1+|u|^{p-1})|u|$, for some $C >0$. Thus, by using  Lemma \ref{lm2},  the following holds
\begin{align}\label{e124}
\|v_2(T)\|_2 &\leq  C\left\lbrace \|(1+|u|^{p-1})u_xv\|_{L^1(0,T;L^2(\R))}+ \|(1+|u|^{p-1})|u|v_x\|_{L^1(0,T;L^2(\R))} \right\rbrace \notag \\
&\leq  2C\left\lbrace 2^{\frac{1}{2}}T^{\frac{1}{4}}\|u\|_{0,T}\|v\|_{0,T}+ 2^{\frac{p}{2}}T^{\frac{2-p}{4}}\|u\|_{0,T}^p\|v\|_{0,T}\right\rbrace.
\end{align}
Using (\ref{e122}), (\ref{e123}) and (\ref{e124}), we obtain a positive constant $K_T$, such that
\begin{align*}
\|v(T)\|_2 \leq \left(e^{-2\lambda_0T}+K_T(1+\|u\|_{0,T}^{p-1})\|u\|_{0,T}\sigma(\|u\|_{0,T})\right)\|v_0\|_2.
\end{align*}
With the notation introduced above, we consider the sequence $y_n(\cdot)=v(\cdot, nT)$ and introduce $w_n(\cdot,t)=v(\cdot,t+nT)$. For $t\in [0,T]$, $w_n$ solves the problem
\begin{equation*}
\left\lbrace\begin{tabular}{l l}
$\partial_t w_n+\partial_x^3 w_n-\partial_x^2 w_n+[a(u(\cdot+nT))w_n]_x+bw_n= 0$ & in $\R\times \R^+$ \\
$w_n(0)=y_n$ & in $\R$.
\end{tabular}\right.
\end{equation*}
First, observe that we can obtain for $y_n$ a estimate similar to the one obtained for $v(T)$:
\begin{align}\label{yn}
\|y_{n+1}\|_2&=\|w_{n}(T)\|_2 \leq e^{-2\lambda_0T}\|w_0\|_2+K_T(1+\|u(\cdot+nT)\|_{0,T}^{p-1})\|u(\cdot+nT)\|_{0,T} \|w_n\|_{0,T} \notag \\
&\leq \left\lbrace e^{-2\lambda_0T}+K_T(1+\|u\|_{0,[nT,(n+1)T]}^{p-1})\|u\|_{0,[nT,(n+1)T]}\sigma(\|u\|_{0,[nT,(n+1)T]})\right\rbrace\|y_n\|_2.
\end{align}
On the other hand, we can take $\beta >0$, small enough, such that
\begin{equation*}
e^{-2\lambda_0T}+K_T(1+\beta^{p-1})\beta\sigma(\beta) < 1.
\end{equation*}
Whit this choice of $\beta$, Corollary \ref{cor2} allows us to choose $N>0$, large enough, satisfying
\begin{equation*}
\|u\|_{0,[nT,(n+1)T]}\leq \alpha_0(\|u_0\|_2)e^{-2\lambda_0 nT} \leq \alpha_0(\|u_0\|_2)e^{-2\lambda_0 NT} \leq \beta, \qquad \forall n> N.
\end{equation*}
Thus, from (\ref{yn}) we obtain the following estimate
\begin{equation*}
\|y_{n+1}\|_2\leq r \|y_n\|_2, \qquad \forall n\geq N, \quad \text{where $0<r<1$},
\end{equation*}
which implies
\begin{equation}\label{e126}
\|v((n+k)T)\|_2\leq r^k \|v(nT)\|_2, \qquad \forall n\geq N.
\end{equation}
Let $T_0=NT$ and $t \geq T_0$. Then, there exists $k \in \N$ and $\theta \in [0,T]$, satisfying
\begin{equation*}
t=(N+k)T+\theta.
\end{equation*}
Then, from (\ref{e125}) and (\ref{e126}), it is found that
\begin{align*}
\|v(t)\|_2& \leq \|v\|_{0,[(N+k)T, (N+k+1)T]} \leq \sigma (\alpha_0(\|u_0\|_2))\|v((N+k)T)\|_2 \\
&\leq \sigma (\alpha_0(\|u_0\|_2))r^{\frac{t-NT-\theta}{T}}\|v(T_0)\|_2 \\
&\leq \sigma (\alpha_0(\|u_0\|_2))r^{\frac{t-NT-\theta}{T}}\sigma (\alpha_0(T_0,\|u_0\|_2))\|v(0)\|_2 \\
&\leq \eta_1(\|u_0\|)e^{-\delta_1t}\|v_0\|_2,
\end{align*}
where $\delta_1=\frac{1}{T}\ln\left(\frac{1}{r}\right)$ and $\eta_1(s)=\sigma(\alpha_0(s))\sigma(\alpha_0(T_0,s))r^{-(N+1)}$. Invoking the estimate (\ref{e127}) in Theorem \ref{prop3}, and having in mind that $v=u_t$, we get
\begin{equation}\label{e128}
\|u_t(t)\|_2\leq \eta_2(\|u_0\|_2)\|u_0\|_{H^3(\R)}e^{-\delta_1t}, \qquad \forall t \geq T_0,
\end{equation}
where $\eta_2(s)=\eta_1(s)C(s)$. On the other hand, note that
\begin{equation}\label{exxx}
\|u_{xxx}(t)\|_2\leq \|u_t(t)\|_2+\|u_{xx}(t)\|_2+\|a(u(t))u(t)\|_2+\|b\|_{\infty}\|u(t)\|_2.
\end{equation}
Estimating the nonlinear term as in the proof of Lemma \ref{lm1},
\begin{align*}
\|a(u(t))u(t)\|_2= \|u(t)^{p+1}u_x(t)\|_2 \leq \|u(t)^{p+1}\|_{\infty}\|u_x(t)\|_2 \leq 2^{\frac{p+1}{2}}\|u(t)\|_2^{\frac{p+1}{2}}\|u_x(t)\|_2^{\frac{p+3}{2}},
\end{align*}
from (\ref{exxx}) we obtain
\begin{align*}
\|u_{xxx}(t)\|_2\leq \|u_t(t)\|_2+\|u_{xx}(t)\|_2+2^{\frac{p+1}{2}}\|u(t)\|_2^{\frac{p+1}{2}}\|u_x(t)\|_2^{\frac{p+3}{2}}+\|b\|_{\infty}\|u(t)\|_2.
\end{align*}
Using Gagliardo-Nirenberg and Young inequalities, it follows that
\begin{align*}
\|u_{xxx}(t)\|_2 &\leq \|u_t(t)\|_2+C\|u_{xxx}(t)\|_2^{\frac{2}{3}}\|u(t)\|_2^{\frac{1}{3}}+2^{\frac{p+1}{2}} C\|u(t)\|_2^{\frac{5p+9}{6}}\|u_{xxx}(t)\|_2^{\frac{p+3}{6}}+\|b\|_{\infty}\|u(t)\|_2 \\
&\leq \|u_t(t)\|_2+ \left(\frac{C}{3\varepsilon}+\|b\|_{\infty}\right)\|u(t)\|_2+\frac{2^{\frac{p+1}{2}}(3-p)C}{6\varepsilon}\|u(t)\|_2^{\frac{5p+9}{3-p}} + \left(\frac{p+7}{6}\right)C\varepsilon\|u_{xxx}(t)\|_2.
\end{align*}
Choosing $\varepsilon=\frac{3}{C(p+7)}$, we have
\begin{align*}
\|u_{xxx}(t)\|_2 &\leq 2\|u_t(t)\|_2+2\left( \frac{C^2(p+7)}{9}+\|b\|_{\infty}\right)\|u(t)\|_2+\frac{2^{\frac{p+1}{2}}(3-p)(p+7)C^2}{18}\|u(t)\|_2^{\frac{5p+9}{3-p}}.
\end{align*}
Applying Proposition \ref{prop6} and estimate (\ref{e128}), the following decay estimate holds
\begin{equation}\label{ec1}
\|u_{xxx}(t)\|_2\leq \eta_3(\|u_0\|)\|u_0\|_{H^3(\R)}e^{-\gamma t}, \qquad \forall t \geq T_0,
\end{equation}
where $\eta_3(s)=2\eta_2(s)+\frac{2}{9}C^2(p+7)+2\|b\|_{\infty}+\frac{1}{3}2^{\frac{p+1}{2}}C^2(3-p)(p+7)s^{\frac{6p-1}{3-p}}$ and $\gamma=\min\left\lbrace\delta_1,2\lambda_0\right\rbrace$.
Now, using Gagliardo-Nirenberg and Young inequalities it is easy to obtain
\begin{equation*}
\|u(t)\|_{H^3(\R)}\leq C_1\left(\|u(t)\|_2+\|u_{xxx}(t)\|_2\right).
\end{equation*}
Finally, by Proposition \ref{prop6} and (\ref{ec1}) we obtain (\ref{e129}) with $\alpha_3(s)=C_1\left(1+\eta_3(s)\right)$.
\end{proof}

Proposition \ref{prop6} and \ref{prop7}, together with Corollary \ref{cor1} and interpolation arguments give the main result of this section.
\begin{thm}\label{teo3}
Let $T>0$,  $1\leq p < 2$, $a(0)=0$ and $b$ satisfying (\ref{hyp b}).  Then, there exist positive constants $\gamma$, $\varepsilon_0$ and a continuous nonnegative function $\alpha: \R^+ \rightarrow \R^+$, such that, for every $u_0 \in H^s(\R)$, with $0\leq s\leq 3$,  the corresponding solution $u$ satisfies
\begin{equation}\label{e130}
\|u(t)\|_{H^s(\R)} \leq \alpha(T_0,\|u_0\|_2)\|u_0\|_{H^s(\R)}e^{-\lambda t}, \qquad \forall t\geq T_0.
\end{equation}
\end{thm}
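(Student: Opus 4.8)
The plan is to obtain the full range $0 \le s \le 3$ by interpolating, via the nonlinear interpolation Theorem \ref{inter} of Bona and Scott, between the two endpoint decay estimates already in hand: the $L^2$ decay of Proposition \ref{prop6} and the $H^3$ decay of Proposition \ref{prop7}. Since $H^s(\R) = [L^2(\R), H^3(\R)]_{s/3, 2}$, the natural choice is $B_0^1 = B_0^2 = L^2(\R)$, $B_1^1 = B_1^2 = H^3(\R)$ and $(\alpha, q) = (\theta, p) = (s/3, 2)$, applied to the (nonlinear) time-$t$ solution map $\A_t: u_0 \mapsto u(\cdot, t)$ for each fixed $t \ge T_0$. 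The endpoints $s = 0$ and $s = 3$ are then exactly Propositions \ref{prop6} and \ref{prop7}.

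The two hypotheses of Theorem \ref{inter} would be supplied as follows. Hypothesis (ii), the bound $\|\A_t h\|_{H^3} \le C_1(\|h\|_{H^3})\|h\|_{H^3}$, is essentially Proposition \ref{prop7}, with $C_1$ carrying the factor $e^{-\gamma t}$. Hypothesis (i), the Lipschitz estimate $\|\A_t f - \A_t g\|_{L^2} \le C_0(\|f\|_2 + \|g\|_2)\|f - g\|_{L^2}$ (and the $L^2$ dependence is a fortiori controlled by the $H^s$ norms that Theorem \ref{inter} allows), must be established with $C_0$ carrying a decaying factor of the form $c_0 e^{-2\lambda_0 t}$. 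To prove this I would imitate the scheme of Proposition \ref{prop7}: the difference $w = u - v$ of two solutions solves a linear equation $w_t + w_{xxx} - w_{xx} + bw = -(a(u)u_x - a(v)v_x)$, whose right-hand side is controlled, via the Mean Value Theorem and Lemma \ref{lm2}, by the $B_{0,T}$-norms of $u$ and $v$. Splitting the time axis into windows $[nT, (n+1)T]$ and using Corollary \ref{cor2}, these norms become arbitrarily small for $n$ large; the coercivity built into (\ref{hyp b}) (exactly as in Proposition \ref{prop6}) then forces the energy of $w$ to contract by a factor $< 1$ on each late window, yielding $\|w(t)\|_2 \le c_0 e^{-2\lambda_0 t}\|f - g\|_2$. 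Corollary \ref{cor1} enters here to guarantee that $u$ and $v$ are smooth for positive times, so that the difference energy estimate is legitimate.

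With (i) and (ii) in force, Theorem \ref{inter} gives $\|\A_t u_0\|_{H^s} \le C(\|u_0\|_2) \|u_0\|_{H^s}$ with $C(r) = 4C_0(4r)^{1 - s/3} C_1(3r)^{s/3}$. Since $C_0 \sim e^{-2\lambda_0 t}$ and $C_1 \sim e^{-\gamma t}$, the decay factor of $C$ is $e^{-[2\lambda_0(1 - s/3) + \gamma s/3]\, t}$, and because the exponent is a convex combination of $2\lambda_0$ and $\gamma$ it is bounded below by $\lambda := \min\{2\lambda_0, \gamma\} = \gamma$; this produces (\ref{e130}) uniformly in $s$, with $\alpha(T_0, \|u_0\|_2)$ the resulting interpolation constant.

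The main obstacle will be hypothesis (i): obtaining the $L^2$ Lipschitz estimate \emph{with} the exponential factor $e^{-2\lambda_0 t}$, not merely a bounded Lipschitz constant. A non-decaying constant would only yield the degenerate rate $\gamma s/3$, which collapses as $s \to 0$ and is inconsistent with the uniform rate asserted for all $0 \le s \le 3$. Securing the genuine rate requires combining the indefinite-damping coercivity of (\ref{hyp b}) with the smallness of the solutions on far-out time windows provided by Corollary \ref{cor2}, which is the delicate quantitative point of the whole argument.
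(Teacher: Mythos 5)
Your proposal is workable in outline, but it is not the paper's argument, and it is substantially heavier than what is needed. The paper does not interpolate the solution \emph{map} at all: it interpolates the \emph{norms of a single solution} at each fixed time. Concretely, by Corollary \ref{cor1} the solution lies in $H^3(\R)$ for $t\geq \varepsilon$, and then the elementary multiplicative interpolation inequality (inequality (2.43) in Lions--Magenes)
\begin{equation*}
\|u(t)\|_{H^s(\R)}\leq C\,\|u(t)\|_2^{1-\frac{s}{3}}\,\|u(t)\|_{H^3(\R)}^{\frac{s}{3}}
\end{equation*}
is applied pointwise in time; plugging Proposition \ref{prop6} (rate $2\lambda_0$) into the first factor and Proposition \ref{prop7} (rate $\gamma$) into the second gives the decay rate $2\lambda_0\left(1-\frac{s}{3}\right)+\gamma\,\frac{s}{3}\geq \gamma$, which is uniform in $s$ precisely because $\gamma\leq 2\lambda_0$ by construction. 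No Lipschitz estimate on the solution map, decaying or otherwise, enters anywhere; the Bona--Scott Theorem \ref{inter} is used in the paper only for the well-posedness result (Theorem \ref{teo2}), not for the decay.

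This matters because the entire difficulty you single out --- hypothesis (i) of Theorem \ref{inter} with an exponentially decaying Lipschitz constant --- is an artifact of your route, and it is exactly the step you do not prove: you sketch a window-by-window contraction mimicking Proposition \ref{prop7} but leave it as a plan. The sketch is credible: with $a(0)=0$, the difference $w=u-v$ solves $w_t+w_{xxx}-w_{xx}+[\tilde a\,w]_x+bw=0$ with $\tilde a=\int_0^1 a(v+\tau w)\,d\tau$, $|\tilde a|\leq C\left(1+|u|^{p-1}+|v|^{p-1}\right)\left(|u|+|v|\right)$, so the source term is small on late windows by Corollary \ref{cor2}, and Duhamel plus the linear decay $e^{-2\lambda_0 t}$ of $S(t)$ yields a contraction factor $r<1$ per window. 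But carrying this out is comparable in length to the proof of Proposition \ref{prop7} itself, and note that it produces some rate $\delta_1=\frac{1}{T}\ln\frac{1}{r}$, not the rate $2\lambda_0$ you assert. That loss is harmless: the interpolated rate $\left(1-\frac{s}{3}\right)\delta_1+\frac{s}{3}\gamma\geq\min\{\delta_1,\gamma\}>0$ is still uniform in $s$, so you should not insist on recovering $2\lambda_0$ --- what you cannot afford, as you correctly observe, is a non-decaying constant. In summary: your route can be completed and would prove the theorem, but its crucial estimate is missing as written, and the paper's own proof shows the result follows with none of this machinery.
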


\begin{proof}
By Corollary \ref{cor1} the corresponding solution $u$ belongs to $B_{0,[\varepsilon,T]}$, for all $\varepsilon \in (0,T]$. In particular, we choose $\varepsilon \leq T_0$, where $T_0$ is given by Proposition \ref{prop7}. Then, by using the interpolation inequality (2.43) in \cite[pag. 19]{lions1972non}, we have
\begin{equation*}
\|u(t)\|_{H^s(\R)}=\|u(t)\|_{[L^2(\R),H^3(\R)]_{2,\frac{s}{3}}} \leq C\|u(t)\|_2^{1-\frac{s}{3}}\|u(t)\|_{H^3(\R)}^{\frac{s}{3}}, \quad \forall t\geq \varepsilon.
\end{equation*}
Finally, Propositions \ref{prop6} and \ref{prop7} give us that
\begin{equation*}
\|u(t)\|_{H^s(\R)}\leq Ce^{-2\left(1-\frac{s}{3}\right)\lambda_0t}\|u_0\|_2^{\left(1-\frac{s}{3}\right)}\alpha_3^{\frac{s}{3}}(\|u_0\|_2,T_0)e^{-\frac{s}{3}\gamma t}, \qquad \forall t \geq T_0.
\end{equation*}
Observe that, by construction, $\gamma \leq 2\lambda_0$, therefore we obtain (\ref{e130}) with $\alpha(s,T_0)=C\alpha_3^{\frac{s}{3}}(s,T_0)$.
\end{proof}

\subsection{\texorpdfstring{Case $2 \leq p <5$.}{Case 2}}
Along this section we assume that the damping function $b = b(x)$ does not change sign and satisfies (\ref{hyp a}). Under this condition, we prove the exponential decay of the solutions in the $L^2$-norm by using the so-called compactness-uniqueness argument. The key is to establish the unique continuation property for the solution of the GKdV-B equation. The proof of this unique continuation property is mainly based on a Carleman estimate.

The next Carleman estimate is based on the global Carleman inequality obtained for the KdV equation in \cite{rosier2000exact}. The proof is given in the Appendix.

\begin{lem}[\textbf{Carleman's estimative}]\label{carl}
Let $T$ and $L$ be positive numbers. Then, there exist a smooth positive function $\psi$ on $[-L, L]$ (which depends on L) and positive constants $s_0 = s_0(L, T)$ and $C = C(L, T)$, such that, for all $s \geq s_0$ and any
\begin{equation}\label{e131}
q \in L^2(0,T;H^3(-L,L))\cap H^1(0,T;L^2(-L,L))
\end{equation}
satisfying
\begin{equation}\label{e132}
\text{$q(t, \pm L) = q_x(t, \pm L) = q_{xx}(t, \pm L) = 0$, for $0 \leq t \leq  T$},
\end{equation}
we have
\begin{gather*}
\int_{0}^{T}\int_{-L}^{L}\left\lbrace \frac{s^5}{t^5(T-t)^5}|q|^2+\frac{s^3}{t^3(T-t)^3}|q_x|^2+\frac{s}{t(T-t)}|q_{xx}|^2\right\rbrace e ^{-\frac{2s\psi(x)}{t(T-t)}}dxdt \notag \\
\leq C \int_{0}^{T}\int_{-L}^{L}|q_t-q_{xx}+q_{xxx}|^2e ^{-\frac{2s\psi(x)}{t(T-t)}}dxdt.
\end{gather*}
\end{lem}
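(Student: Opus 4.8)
The plan is to prove the inequality first for the conjugated unknown and then transfer it back to $q$. Write $\theta(t)=t(T-t)$, $\varphi(x,t)=\psi(x)/\theta(t)$, and set $u=e^{-s\varphi}q$, so that the weight $e^{-2s\varphi}$ on both sides of the claim is just $|u|^2/|q|^2$ and, crucially, tends to $0$ faster than any power of $1/\theta$ as $t\to 0^+$ or $t\to T^-$, since $\psi>0$ forces $\varphi\to+\infty$ there. By a density argument exploiting the regularity \eqref{e131} it suffices to treat smooth $q$ satisfying \eqref{e132}. Conjugating $Lq:=q_t-q_{xx}+q_{xxx}$ I would compute, schematically,
\[
L_\varphi u:=e^{-s\varphi}L(e^{s\varphi}u)=u_t+u_{xxx}-u_{xx}+s(\cdots)+s^2(\cdots)+s^3\varphi_x^3\,u,
\]
and split $L_\varphi=P_++P_-$ into its formally self-adjoint and skew-adjoint parts in $L^2((0,T)\times(-L,L))$; the four terms $-u_{xx}-2s\varphi_x u_x-s\varphi_{xx}u-s^2\varphi_x^2u$ coming from the dissipative part $-q_{xx}$ are precisely the new contributions relative to Rosier's KdV estimate \cite{rosier2000exact}.

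Next I would expand
\[
\int_0^T\!\!\int_{-L}^{L}|L_\varphi u|^2\,dx\,dt=\|P_+u\|^2+\|P_-u\|^2+2\langle P_+u,P_-u\rangle
\]
and integrate the cross term by parts in $x$ and $t$. The dominant contribution arises from pairing the zeroth order term $s^3\varphi_x^3u$ of $P_+$ with the skew first order terms of $P_-$: integrating by parts, a product such as $s^5\int\varphi_x^5\,uu_x$ becomes $s^5\int(\varphi_x^5)_x\,u^2$, producing, together with the other pairings, a quadratic form whose leading part is $\int\!\int\big(c_0\,s^5\theta^{-5}|u|^2+c_1\,s^3\theta^{-3}|u_x|^2+c_2\,s\theta^{-1}|u_{xx}|^2\big)$. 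This dictates the choice of $\psi$: it must be smooth, strictly positive, with $\psi'$ nowhere vanishing on $[-L,L]$ and with its convexity adjusted so that $c_0,c_1,c_2>0$ uniformly on $[-L,L]$; this is exactly the weight built in Rosier's construction, which I would keep, since the extra $-\partial_x^2$ only generates lower-order terms.

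The main work, and the principal obstacle, is the bookkeeping of the remaining terms and the verification that the signs are correct for the chosen $\psi$. Every term carrying a strictly lower power of $s$ than the three dominant ones — including all the new terms coming from $-q_{xx}$, whose largest zeroth order contribution is only of order $s^2\varphi_x^2$ — is absorbed into the dominant terms by taking $s\geq s_0(L,T)$. The spatial boundary terms generated at $x=\pm L$ each carry a factor $q$, $q_x$ or $q_{xx}$ and therefore vanish by the homogeneous conditions \eqref{e132}; the temporal boundary terms at $t=0,T$ are polynomial in $1/\theta$ times $e^{-2s\varphi}$ and hence vanish as $t\to0^+,T^-$. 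This yields the estimate with $u,u_x,u_{xx}$ on the left and $\int\!\int|Lq|^2e^{-2s\varphi}=\int\!\int|L_\varphi u|^2$ on the right.

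Finally I would revert to $q$ using $u=e^{-s\varphi}q$ together with
\[
|q_x|^2e^{-2s\varphi}\leq 2|u_x|^2+2s^2\varphi_x^2|u|^2,\qquad |q_{xx}|^2e^{-2s\varphi}\leq C\big(|u_{xx}|^2+s^2\varphi_x^2|u_x|^2+s^4\varphi_x^4|u|^2\big).
\]
Multiplied by the respective time weights $s^3\theta^{-3}$ and $s\theta^{-1}$, the extra pieces $s^2\varphi_x^2|u|^2$, $s^4\varphi_x^4|u|^2$ and $s^2\varphi_x^2|u_x|^2$ become $\lesssim s^5\theta^{-5}|u|^2$ and $\lesssim s^3\theta^{-3}|u_x|^2$ (using $\varphi_x^2=(\psi')^2/\theta^2$ with $\psi'$ bounded), hence are controlled by the dominant terms already established for $u$. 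This shows the weighted norms of $q,q_x,q_{xx}$ are dominated by those of $u,u_x,u_{xx}$, and a final density argument extends the inequality to all $q$ in the class \eqref{e131}–\eqref{e132}, completing the proof.
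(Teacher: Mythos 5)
Your overall strategy---conjugating with Rosier's weight $\varphi=\psi(x)/(t(T-t))$, splitting the conjugated operator into two pieces, expanding the cross term by integration by parts, choosing $\psi$ positive with $\psi'$ nonvanishing and suitable concavity, absorbing lower-order terms for $s\ge s_0$, and finally converting the estimate on $u=e^{-s\varphi}q$ back to $q$---is the same as the paper's (the paper's splitting $L_1u=u_t+u_{xxx}+Bu_x$, $L_2u=Au+Cu_{xx}$ is exactly your self-adjoint/skew-adjoint decomposition up to lower-order terms), and your final conversion step, which the paper compresses into one sentence, is handled correctly. However, there is one concrete structural error in the core of your plan: you assert that the cross term produces a leading quadratic form containing $c_1\,s^3\theta^{-3}|u_x|^2$ with $c_1>0$ for a suitably chosen $\psi$. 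This is impossible: the coefficient of $|u_x|^2$ at order $s^3$ cancels identically, for \emph{every} choice of weight. In the paper's notation this coefficient is $E$ in \eqref{eE}, and at order $s^3$ one has $3A_x\approx 9s^3\varphi_x^2\varphi_{xx}$, $BC_x\approx 9s^3\varphi_x^2\varphi_{xx}$, $B_xC\approx 18s^3\varphi_x^2\varphi_{xx}$, so the $s^3$ contributions sum to $(9+9-18)\,s^3\varphi_x^2\varphi_{xx}=0$. This cancellation is a well-known feature of Carleman estimates for third-order operators and cannot be removed by adjusting the convexity of $\psi$; your verification step ``check $c_0,c_1,c_2>0$'' would therefore fail, and the plan as written never produces the $s^3\theta^{-3}|q_x|^2$ term appearing in the statement.

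What the direct computation actually yields (and what the paper proves) is positivity at orders $s^5$, $s^2$ and $s$: $D\ge C_1 s^5\theta^{-5}$, $E\ge C_2 s^2\theta^{-2}$, $F\ge C_3 s\theta^{-1}$, the middle one requiring the extra hypothesis $\psi_x\psi_{xxx}<(1-\varepsilon)\psi_{xx}^2$ on $[-L,L]$ in addition to $|\psi_x|>0$ and $\psi_{xx}<0$. The missing idea is an interpolation step performed \emph{a posteriori}: writing $Q=(0,T)\times(-L,L)$ and using that the factor $s^3\theta^{-3}$ is independent of $x$,
\[
\int_Q \frac{s^3}{\theta^3}|u_x|^2\,dx\,dt=-\int_Q\frac{s^3}{\theta^3}\,u\,u_{xx}\,dx\,dt\le\frac12\int_Q\frac{s^5}{\theta^5}|u|^2\,dx\,dt+\frac12\int_Q\frac{s}{\theta}|u_{xx}|^2\,dx\,dt,
\]
so the $s^3$-term on the left of the Carleman inequality is recovered from the $s^5$- and $s$-terms already established. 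With this correction inserted---and keeping your conversion back to $q$, which absorbs the commutator terms $s^2\varphi_x^2|u|^2$, $s^4\varphi_x^4|u|^2$ and $s^2\varphi_x^2|u_x|^2$ into the dominant ones exactly as you wrote---your argument coincides with the paper's proof.
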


\begin{lem}[\textbf{Unique continuation property}]\label{ucp}
Let $T$ be a positive number. If $u  \in L^{\infty}(0,T;H^1(\R))$ solves
\begin{equation*}\label{e142}
\left\lbrace
\begin{tabular}{l l}
$u_t -u_{xx}+u_{xxx}+a(u)u_x=0$ & in $\R \times (0,T)$ \\
$u \equiv 0$ & in $(-\infty,-L)  \cup (L,\infty) \times (0,T),$ \\
\end{tabular}\right.
\end{equation*}
for some $L>0$, with $a \in C(\R)$ satisfying (\ref{e2}), then $u \equiv 0$ in $\R \times (0,T)$.
\end{lem}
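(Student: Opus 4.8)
The plan is to apply the Carleman estimate of Lemma \ref{carl} to $q=u$ and then to absorb the nonlinear right-hand side into the left-hand side by choosing the Carleman parameter $s$ large. The structural reason this works is that the term $\frac{s^3}{t^3(T-t)^3}|q_x|^2$ on the left carries a factor $s^3$, whereas the nonlinearity will only contribute a constant (in $s$) multiple of the same integral $\iint|u_x|^2 e^{-2s\psi/(t(T-t))}$; hence for $s$ large the two compete in our favour and the $|u_x|^2$ terms cancel, leaving only the nonnegative $|u|^2$ and $|u_{xx}|^2$ terms, which forces $u\equiv 0$.

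First I would fix $L'>L$ and work on the slightly larger interval $(-L',L')$. Since $u\equiv 0$ on $(-\infty,-L)\cup(L,\infty)$, the function $u$ vanishes identically in neighbourhoods of $x=\pm L'$, so the boundary conditions (\ref{e132}) hold trivially at $\pm L'$ (all traces of $u,u_x,u_{xx}$ vanish). The substantive preliminary is to verify the regularity (\ref{e131}). I would write the equation as the linear inhomogeneous problem $u_t-u_{xx}+u_{xxx}=f$ with $f:=-a(u)u_x$. Because $H^1(\R)\hookrightarrow L^\infty(\R)$, the hypothesis $u\in L^\infty(0,T;H^1(\R))$ makes $u$ uniformly bounded in space-time, so (\ref{e2}) gives $\sup|a(u)|=:M<\infty$ and hence $|f|\leq M|u_x|$, i.e. $f\in L^\infty(0,T;L^2(\R))$ with support in $[-L,L]$. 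Note that $u$ and all its $x$-derivatives are supported in $[-L,L]$, so the energy estimates below carry no boundary terms.

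The regularity is then obtained by a smoothing/bootstrap argument. Multiplying the equation by $-u_{xx}$ and integrating (the $u_{xxx}u_{xx}$ term vanishes) gives $\frac{d}{dt}\|u_x\|_2^2+\|u_{xx}\|_2^2\leq \|f\|_2^2$, so integrating from any positive time at which $u\in H^1$ yields $u\in L^2(\varepsilon,T;H^2(\R))$ for every $\varepsilon>0$. Differentiating the equation once in $x$ and repeating with $w=u_x$, the new source $f_x=-a'(u)u_x^2-a(u)u_{xx}$ lies in $L^2(\varepsilon,T;L^2(\R))$: indeed $a(u)u_{xx}\in L^2L^2$ since $a(u)$ is bounded and $u_{xx}\in L^2L^2$, while $a'(u)u_x^2\in L^2L^2$ follows from Gagliardo-Nirenberg ($\|u_x\|_{L^4}^2\leq C\|u_x\|_2\|u_x\|_{H^1}$, whence $u_x\in L^4(0,T;L^4)$). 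Starting again from a positive time at which $u\in H^2$, this gives $u\in L^2(\varepsilon,T;H^3(\R))$, and then the equation yields $u_t\in L^2(\varepsilon,T;L^2(\R))$. Thus $u$ satisfies (\ref{e131}) on every subinterval $(\varepsilon,T-\varepsilon)$.

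Finally I would apply Lemma \ref{carl} (on the subintervals $(\varepsilon,T-\varepsilon)$ and let $\varepsilon\to0$, which is legitimate because at fixed $s$ the weight $e^{-2s\psi(x)/(t(T-t))}$ decays faster than any power of $t$ as $t\to0^+,\,T^-$ and so dominates the at-most-polynomial blow-up of $\|u(t)\|_{H^3}$ there) with $q=u$, so that $q_t-q_{xx}+q_{xxx}=f$. The right-hand side is then bounded by $CM^2\iint|u_x|^2 e^{-2s\psi/(t(T-t))}$, and using $t(T-t)\leq T^2/4$ the left-hand coefficient satisfies $\frac{s^3}{t^3(T-t)^3}\geq \frac{64s^3}{T^6}$; hence for $s$ large the middle term on the left absorbs this contribution. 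After absorption the remaining integrands are nonnegative and their sum is $\leq0$, so in particular $\iint \frac{s^5}{t^5(T-t)^5}|u|^2 e^{-2s\psi/(t(T-t))}\,dx\,dt\leq0$, which forces $u\equiv0$ on $(-L',L')\times(0,T)$ and therefore on $\R\times(0,T)$. The main obstacle is precisely this regularity step: feeding the merely $H^1$ weak solution into the Carleman estimate requires the smoothing of the linear flow together with a careful bootstrap, and the degeneracy of the Carleman weight at $t=0,T$ must be exploited to accommodate the fact that the gained $H^3$ regularity is only interior in time.
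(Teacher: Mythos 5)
Your core mechanism is the same as the paper's: apply Lemma \ref{carl}, bound the right-hand side by $\|a(u)\|_{L^\infty(Q)}^2\iint|u_x|^2e^{-2s\psi/(t(T-t))}\,dx\,dt$, and absorb this into the $s^3$-term on the left by taking $s$ large. Where you genuinely diverge is in how the low regularity of the weak solution is handled. The paper never upgrades the regularity of $u$ itself: it introduces the Steklov averages $u^h(x,t)=\frac1h\int_t^{t+h}u(x,s)\,ds$, which lie in $W^{1,\infty}(0,T';H^1(\R))$ by construction, obtains their spatial $H^3$ regularity from the equation via the Fourier-multiplier (elliptic) argument of Theorem \ref{prop3} rather than energy estimates, applies the Carleman inequality to $u^h$, and pays for this with a commutator error $\|(a(u)u_x)^h-a(u)u^h_x\|^2_{L^2(Q)}$ on the right, which tends to zero as $h\to0$ at fixed $s$. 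You instead claim interior-in-time smoothing of $u$ itself and then apply the Carleman estimate to $u$ directly; this is a legitimate alternative route (the semigroup generated by $\partial_x^2-\partial_x^3$ has heat-like smoothing, its symbol having real part $-\xi^2$), and it has the advantage of avoiding the commutator altogether.

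There are, however, two gaps in your execution. First, the bootstrap as written is circular: multiplying the equation by $-u_{xx}$ and asserting $\int u_{xxx}u_{xx}\,dx=0$ presupposes exactly the $H^2$/$H^3$ regularity you are trying to prove, and for a distributional solution that is merely in $L^\infty(0,T;H^1(\R))$ these pairings are not defined. To make this rigorous you must either mollify in $x$ and perform the energy estimates on $\rho_\delta*u$ with bounds uniform in $\delta$, or first identify the weak solution with a mild solution and run the bootstrap through the Duhamel formula and the smoothing of the linear semigroup; both are standard, but this is precisely the technical crux that the paper's averaging device exists to bypass, so it cannot be left as a formal computation. Second, your passage $\varepsilon\to0$ is not justified as stated: Lemma \ref{carl} requires the regularity (\ref{e131}) on the \emph{whole} time interval carrying the weight, so it cannot be invoked on $(0,T)$ for your $u$, and the "weight domination" heuristic is not a property of the lemma used as a black box (it would require reopening its proof and tracking the time-boundary terms created at $t=\varepsilon$, $T-\varepsilon$ by the integrations by parts). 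The clean fix is simpler than what you propose: apply the lemma on each subinterval $(\varepsilon,T-\varepsilon)$ with the weight adapted to that subinterval, absorb the nonlinearity there (the bound $\|a(u)\|_{L^\infty}$ is uniform in $\varepsilon$), conclude $u\equiv0$ on $(-L,L)\times(\varepsilon,T-\varepsilon)$ outright, and take the union over $\varepsilon>0$; no limit inside the inequality is needed.
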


\begin{proof}
For $h>0$, consider
\begin{align*}
u^h(x,t)=\frac{1}{h}\int_t^{t+h}u(x,s)ds.
\end{align*}
Then, $u^h \in W^{1,\infty}(0,T',H^1(\R))$ and
\begin{equation}\label{e143}
u^h \rightarrow u \quad \text{in $L^{\infty}(0,T';H^1(\R))$},
\end{equation}
for any $T'<T$. Moreover, $u^h$ solves
\begin{equation}\label{e144}
\left\lbrace
\begin{tabular}{l l}
$u_t^h -u^h_{xx}+u^h_{xxx}+(a(u)u_x)^h=0$ & in $\R \times (0,T')$ \\
$u^h \equiv 0$ & in $(-\infty,-L)  \cup (L,\infty)\times (0,T)$. \\
\end{tabular}\right.
\end{equation}
On the other hand, note that $u \in L^{\infty}(0,T,H^1(\R))$ implies $a(u)u_x \in L^{\infty}(0,T,L^2(\R))$. Indeed, since
\begin{align*}
\|a(u)u_x\|_{L^{\infty}(0,T,L^2(\R))}\leq C\left\lbrace \|u\|_{L^{\infty}(0,T,H^1(\R))}+\|u\|^p_{L^{\infty}(0,T,L^{\infty}(\R))}\|u\|_{L^{\infty}(0,T,H^1(\R))}\right\rbrace,
\end{align*}
$(a(u)u_x)^h \in L^{\infty}(0,T,L^2(\R))$.  Then, proceeding as in the proof of Theorem \ref{prop3}, we have
\begin{equation*}
u^h \in L^{\infty}(0,T',H^3_0(-L,L)) \cap  H^1(0,T',L^2(-L,L)).
\end{equation*}
Invoking the Lemma \ref{carl} we obtain $C, s_0 > 0$ and a positive function $\psi$, such that
\begin{align*}
\int_Q\left\lbrace \frac{s^5|u^h|^2}{t^5(T-t)^5}+\frac{s^3|u^h_x|^2}{t^3(T-t)^3}+\frac{s|u^h_{xx}|^2}{t(T-t)}\right\rbrace e ^{-\frac{2s\psi(x)}{t(T-t)}}dxdt \leq C \int_Q|u^h_t-u^h_{xx}+u^h_{xxx}|^2e ^{-\frac{2s\psi(x)}{t(T-t)}}dxdt,
\end{align*}
for all $s >s_0$ and $Q=(0,T')\times (-L,L)$. By (\ref{e144}),
\begin{align*}
\int_Q|u^h_t-u^h_{xx}+u^h_{xxx}|^2e ^{-\frac{2s\psi(x)}{t(T-t)}}dxdt
&=  \int_Q|(a(u)u_x)^h|^2e ^{-\frac{2s\psi(x)}{t(T-t)}}dxdt\\
&\leq  \int_Q|a(u)u^h_x|^2e ^{-\frac{2s\psi(x)}{t(T-t)}}dxdt+\int_Q|(a(u)u_x)^h-a(u)u^h_x|^2dxdt \\
&\leq \|a(u)\|^2_{L^{\infty}(Q)}\int_Q|u^h_x|^2e ^{-\frac{2s\psi(x)}{t(T-t)}}dxdt+\|(a(u)u_x)^h-a(u)u^h_x\|^2_{L^2(Q)}.
\end{align*}
Hence,
\begin{gather*}\label{e145}
0<\int_Q\left\lbrace \frac{s^5}{t^5(T-t)^5}|u^h|^2+\left(\frac{s^3}{t^3(T-t)^3}-C\|a(u)\|^2_{L^{\infty}(Q)}\right)|u^h_x|^2+\frac{s}{t(T-t)}|u^h_{xx}|^2\right\rbrace e ^{-\frac{2s\psi(x)}{t(T-t)}}dxdt \notag\\
\leq C \|(a(u)u_x)^h-a(u)u^h_x\|^2_{L^2(Q)},
\end{gather*}
since, for $s$, large enough, we obtain $\frac{s^3}{t^3(T-t)^3}-C\|a(u)\|^2_{L^{\infty}(Q)} >0$.\\

Note that (\ref{e143}) guarantees that
$a(u)u^h_x \rightarrow a(u)u_x$ in $L^2(0,T;L^2(-L.L))$, since $a(u) \in L^{\infty}(0,T',L^{\infty}(-L,L))$. Moreover, as $a(u)u_x \in L^{2}(0,T',L^{2}(-L,L))$ we have that $(a(u)u_x)^h \in W^{1,\infty}(0,T',L^{2}(-L,L))$ and $(a(u)u_x)^h \rightarrow a(u)u_x \in L^{2}(0,T',L^{2}(-L,L))$. 
Thus, passing to the limit in (\ref{e145}), we obtain that $u \equiv 0$ in $(-L,L)\times (0,T')$. Using (\ref{e144}) and since $T'$ may be taken arbitrarily close to $T$, we have $u \equiv 0$ in $\R \times (0,T)$.
\end{proof}

Now we show that any weak solution of (\ref{e1}) decays exponentially to zero  in the space $L^2(\R)$.

\begin{thm}\label{explocal}
Let $a$ be a $C^2(\R)$ function satisfying (\ref{e2}) with $ 1 \leq p < 5$ and $b$ satisfying (\ref{hyp a}). Then, the system  (\ref{e1}) is locally uniformly exponentially stable in $L^2(\R)$, i.e, for any $r > 0$ there exist two constants $C > 0$ and $\eta =\eta(r) > 0$, such that, for any $u_0 \in L^2(\R)$, with $\|u_0\|_{L^2(\R)} < r$, and any weak solution $u$ of (\ref{e1}),
\begin{equation*}\label{e151}
\|u(t)\|_{L^2(\R)} \leq C\|u_0\|_{L^2(\R)}e^{-\eta t}, \quad t\geq 0.
\end{equation*}
\end{thm}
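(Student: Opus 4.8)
The plan is to derive the exponential decay from an observability inequality for the dissipated energy together with a compactness--uniqueness argument whose decisive ingredient is the unique continuation property already isolated in Lemma \ref{ucp}. Fix $T>0$ and a weak solution $u$ with $\|u_0\|_{L^2(\R)}<r$. Multiplying the equation by $u$ (the nonlinear term integrates to a perfect derivative, exactly as in the proof of Proposition \ref{r1}) gives the dissipation law, valid as an identity for the regular approximations $u_n$ of Definition \ref{def2} and, by weak lower semicontinuity along the defining subsequence, as the corresponding inequality for $u$:
\[ E(t)+\int_0^t\Big(\|u_x(s)\|_2^2+\int_\R b(x)\,u^2\,dx\Big)ds\le E(0),\qquad E(t)=\tfrac12\|u(t)\|_2^2. \]
The reduction step is then the claim that there exist $C=C(r)>0$ and $T>0$ such that every such weak solution satisfies the observability estimate
\[ E(0)\le C\int_0^T\int_\R b(x)\,u^2\,dx\,dt. \]
Granting this and using $\int_0^T\int_\R b\,u^2\le E(0)-E(T)$ from the dissipation law, one obtains $E(T)\le(1-1/C)\,E(0)$. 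Since the restriction of $u$ to $[kT,(k+1)T]$ is again a weak solution with datum $u(kT)$ of no larger $L^2$ norm, the same constant $C(r)$ applies at each step, iteration yields $E(kT)\le(1-1/C)^kE(0)$, and the monotonicity of $E$ fills in the intermediate times, giving $\|u(t)\|_2\le C\|u_0\|_2\,e^{-\eta t}$ with $\eta=\eta(r)=-\tfrac1T\log(1-1/C)>0$.

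The heart of the matter is therefore the observability estimate, which I would prove by contradiction. If it fails there is a sequence of weak solutions $u^n$ with $\|u^n(0)\|_2\le r$ along which $\int_0^T\int_\R b\,(u^n)^2\,dx\,dt\to0$ after normalizing so that the relevant $L^2$-type quantity is of unit size. Writing $v^n$ for the normalized sequence and $\lambda_n\le r$ for the corresponding scaling factors, each $v^n$ solves the rescaled equation whose nonlinearity is $a(\lambda_n v^n)\,v^n_x$; since $|a(\lambda_n\mu)|\le C(1+r^p|\mu|^p)$ has the same growth exponent $p<5$, the estimates of Theorem \ref{teo4} (choosing $\alpha\in(1,6/(p+1))$ and writing the nonlinearity as $\partial_x\!\int_0^{v^n}a(\lambda_n s)\,ds$) bound $\{v^n\}$ in $L^\infty(0,T;L^2(\R))\cap L^2(0,T;H^1(\R))$ and bound $\{\partial_t v^n\}$ in $L^\alpha(0,T;H^{-2}(I))$ on every bounded interval $I$. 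Applying Corollary 33 of \cite{simon1986compact} along the compact embedding $H^1(I)\hookrightarrow L^2(I)\hookrightarrow H^{-2}(I)$ extracts a subsequence with $v^n\to v$ strongly in $L^2(0,T;L^2_{loc}(\R))$ and a.e., the limit lying in $L^\infty(0,T;L^2(\R))\cap L^2(0,T;H^1(\R))$.

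I would then identify the limit and invoke unique continuation. Passing to the limit as in Theorem \ref{teo4}, using the a.e.\ convergence and Lemma \ref{lm5} to obtain $a(\lambda_n v^n)v^n_x\to a(v)v_x$ in $\mathcal D'(\R\times(0,T))$, shows that $v$ is a weak solution of the full equation. Because $b\ge\lambda_0>0$ on $(-\infty,\alpha)\cup(\beta,\infty)$ by (\ref{hyp a}), the vanishing of the damping term forces $\int_0^T\int_{\R\setminus(\alpha,\beta)}v^2\,dx\,dt=0$, so $v\equiv0$ outside the bounded interval $(\alpha,\beta)$ and hence $b\,v\equiv0$; thus $v$ solves $v_t-v_{xx}+v_{xxx}+a(v)v_x=0$ with $v\equiv0$ on $(-\infty,-L)\cup(L,\infty)$ for $L=\max\{|\alpha|,|\beta|\}$. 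Lemma \ref{ucp} now yields $v\equiv0$ on $\R\times(0,T)$, and in particular on the undamped window $(\alpha,\beta)$, where no dissipative control was available. Finally, the mass of $v^n$ outside $(\alpha,\beta)$ is controlled by $\tfrac1{\lambda_0}\int_0^T\int_\R b\,(v^n)^2\to0$, while the mass inside the bounded interval converges by strong local compactness to $\int_0^T\int_{(\alpha,\beta)}v^2=0$; together with the dissipation law this contradicts the unit normalization of $\{v^n\}$ and establishes the observability estimate.

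The main obstacle is precisely the unique continuation step. Outside $(\alpha,\beta)$ the limit is annihilated directly by the damping, but inside the undamped interval the only route to $v\equiv0$ is through the Carleman estimate of Lemma \ref{carl}, which is exactly why it was proved: it propagates the vanishing from the damped region into $(\alpha,\beta)$. A secondary technical point to monitor is the passage from the regularity $v\in L^2(0,T;H^1(\R))$ delivered by compactness to the hypothesis $v\in L^\infty(0,T;H^1(\R))$ required by Lemma \ref{ucp}; this is reconciled by the time-averaging $v^h$ employed in the proof of that lemma together with the smoothing furnished by the equation on the bounded strip $(\alpha,\beta)\times(0,T)$.
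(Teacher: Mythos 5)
Your overall strategy coincides with the paper's: an observability inequality for the dissipated energy, proved by contradiction through compactness and the unique continuation property of Lemma \ref{ucp}, followed by iteration over time intervals of length $T$. The genuine gap is in the observability inequality you take as the pivot. You claim
\begin{equation*}
E(0)\le C\int_0^T\!\!\int_\R b(x)\,u^2\,dx\,dt,
\end{equation*}
whereas the paper's Claim \ref{claim1} keeps the Burgers dissipation on the right-hand side,
\begin{equation*}
\|u_0\|_2^2\le C\Bigl(\|u_x\|_{L^2(0,T;L^2(\R))}^2+\int_0^T\!\!\int_\R b(x)\,u^2\,dx\,dt\Bigr).
\end{equation*}
This is not cosmetic: your inequality is false. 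Take $a\equiv0$, which is admissible under (\ref{e2}) (for general $a$, use small-amplitude data), and $u_0^n(x)=\phi(x)\cos(nx)$ with $\phi$ smooth and supported in a region where $b$ vanishes (hypothesis (\ref{hyp a}) allows $b\equiv0$ on a subinterval of $(\alpha,\beta)$). Since the real part of the symbol of $\partial_x^2-\partial_x^3$ is $-\xi^2$, the free semigroup damps $u_0^n$ in $L^2$ like $e^{-cn^2t}$, and a Gronwall absorption of the bounded perturbation $bu$ gives $\int_0^T\|u^n(t)\|_2^2\,dt\le Cn^{-2}\|u_0^n\|_2^2$, hence $\int_0^T\int_\R b\,(u^n)^2\,dx\,dt\le \|b\|_{\infty}Cn^{-2}\|u_0^n\|_2^2$. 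No constant $C(r,T)$ can dominate this family. The energy identity explains the failure: energy placed at high frequency inside the undamped zone exits through the $\|u_x\|_2^2$ channel, not through the damping, so $E(0)$ cannot be observed from the damping term alone.

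The same omission is exactly where your contradiction argument fails to close. With your normalization $\lambda_n=\|u^n(0)\|_2\le r$, the negation of your inequality yields only $\int_0^T\int_\R b\,(v^n)^2\,dx\,dt\to0$ and says nothing about $\|\partial_x v^n\|_{L^2(0,T;L^2(\R))}$, which is merely bounded by the dissipation law. Your compactness/UCP argument then shows, correctly, that $\|v^n\|_{L^2(0,T;L^2(\R))}\to0$; but this does not contradict $\|v^n(0)\|_2=1$. The only bridge between the two quantities is the multiplier identity (\ref{e153})--(\ref{e154}), which reads
\begin{equation*}
\|v^n(0)\|_2^2\le\frac1T\|v^n\|_{L^2(0,T;L^2(\R))}^2+2\|\partial_x v^n\|_{L^2(0,T;L^2(\R))}^2+2\int_0^T\!\!\int_\R b\,(v^n)^2\,dx\,dt,
\end{equation*}
and the middle term does not vanish under your hypotheses; in the counterexample above it carries essentially all of $E(0)$. (If you instead normalize by $\|u^n\|_{L^2(0,T;L^2(\R))}$, as the paper does, the final contradiction is immediate, but then the negation of \emph{your} inequality no longer implies that the damping integral of $v^n$ tends to zero.) The paper's formulation repairs both ends at once: negating Claim \ref{claim1}, via (\ref{e156}), forces $\|\partial_x v_n\|_{L^2(0,T;L^2(\R))}\to0$ \emph{and} $\int\int b\,v_n^2\to0$ simultaneously, while the normalization $\|v_n\|_{L^2(0,T;L^2(\R))}=1$ survives in the limit thanks to the tightness supplied by $b\ge\lambda_0$ outside $(\alpha,\beta)$. (Indeed, once the gradient term is included, the limit satisfies $v_x\equiv0$ and hence $v\equiv0$ outright, since constants in $L^2(\R)$ vanish; the paper nevertheless runs the unique continuation route.) The remainder of your outline --- Simon compactness, identification of the limit equation (note the limit nonlinearity is $a(\lambda v)v_x$ with $\lambda=\lim\lambda_n$, not $a(v)v_x$), the reduction $bv\equiv0$ so that Lemma \ref{ucp} applies, and the regularity upgrade to $L^{\infty}(t_1',t_2';H^1(\R))$ --- does match the paper's Claims \ref{claim1} and \ref{claim2}; the defect is solely the missing $\|u_x\|^2$ term, and it is not repairable as stated because the stronger inequality is false.
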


\begin{proof}
First, note that the corresponding solution $u$ of (\ref{e1}) satisfies the following estimate
\begin{equation}\label{e152}
\|u(T)\|_2^2 +2 \|u_x\|_{L^2(0,T;L^2(\R))}^2+2\int_0^T\int_{\R}b(x)|u(x,t)|^2dxdt = \|u_0\|_2^2.
\end{equation}
On the other hand, multiplying the equation in (\ref{e1}) by $(T-t)u$ and integrating on $\R\times [0,T]$, we obtain
\begin{equation}\label{e153}
\frac{T}{2} \|u_0\|_2^2= \frac{1}{2}\|u\|_{L^2(0,T;L^2(\R))}^2+\int_0^T\int_{\R}(T-t)|u_x(x,t)|^2dxdt +\int_0^T\int_{\R}(T-t)b(x)|u(x,t)|^2dxdt,
\end{equation}
which implies that
\begin{equation}\label{e154}
\|u_0\|_2^2\leq \frac{1}{T}\|u\|_{L^2(0,T;L^2(\R))}^2+2\|u_x\|_{L^2(0,T;L^2(\R))}^2 +2\int_0^T\int_{\R}b(x)|u(x,t)|^2dxdt.
\end{equation}
\begin{claim}\label{claim1}
For any $T>0$ and $r>0$ there exist $C=C(r,T)$, such that, for any weak solution $u$ of (\ref{e1}) with $\|u_0\|_2\leq r$, the following estimate holds:
\begin{equation*}\label{e155}
\|u_0\|_2^2 \leq C\left( \|u_x\|_{L^2(0,T;L^2(\R))}^2+\int_0^T\int_{\R}b(x)|u(x,t)|^2dxdt\right).
\end{equation*}
\end{claim}
\begin{proof}
By (\ref{e154}) it is sufficient to prove that
\begin{equation}\label{e156}
\|u\|_{L^2(0,T;L^2(\R))}^2 \leq C_1\left( \|u_x\|_{L^2(0,T;L^2(\R))}^2+\int_0^T\int_{\R}b(x)|u(x,t)|^2dxdt\right)
\end{equation}
provided that $\|u_0\|_2 \leq r$. In fact, we argue by contradiction and suppose that (\ref{e156}) does not hold. Hence, there exist a sequence $\{u_n\}$ of weak solution in $C_w([0,T];L^2(\R))\cap L^2(0,T;H^1(\R))$ satisfying
\begin{equation*}
\|u_n(0)\|_2 \leq r
\end{equation*}
and such that
\begin{equation*}\label{e157}
\lim_{n \rightarrow \infty}\frac{\|u_n\|_{L^2(0,T;L^2(\R))}^2}{ \|\partial_xu_n\|_{L^2(0,T;L^2(\R))}^2+\int_0^T\int_{\R}b(x)|u_n|^2dxd}=+\infty.
\end{equation*}
Note that, from (\ref{e153}), we get
\begin{equation*}\label{ln}
\lambda_n := \|u_n\|_{L^2(0,T;L^2(\R))} \leq T^{\frac{1}{2}} \|u_n(0)\|_2 \leq T^{\frac{1}{2}} r,
\end{equation*}
which implies the following convergences
\begin{align}\label{e158}
\lim_{n \rightarrow \infty} \|\partial_xu_n\|_{L^2(0,T;L^2(\R))}^2=0 \quad \text{and} \quad \lim_{n \rightarrow \infty}\int_0^T\int_{\R}b(x)|u_n|^2dxd =0.
\end{align}
Moreover, from  (\ref{ln}) we obtain a subsequence, denoted by the same index $n$, and $\lambda \geq 0$, such that
\begin{equation*}
\lambda_n \rightarrow \lambda.
\end{equation*}
Define $v_n(x,t):=\frac{u_n(x,t)}{\lambda_n}$. Then, $v_n$ is a weak solution of
\begin{equation*}\label{e159}
\left\lbrace \begin{tabular}{l}
$\partial_t v_n + \partial_x^3v_n- \partial_x^2v_n+a(\lambda_nv_n)\partial_xv_n+bv_n =0$ \\
$\|v_n\|_{L^2(0,T;L^2(\R))}=1$ \\
$v_n(x,0)=\frac{u_n(x,0)}{\lambda_n}.$
\end{tabular}\right.
\end{equation*}
Note that
\begin{equation*}
|a(\lambda_n \mu )|\leq C'(1+|\mu|^p)
\end{equation*}
and $v_n(x,0)$ is bounded in $L^2(\R)$. In fact, by (\ref{e154}) and (\ref{e158}) we obtain
\begin{equation}\label{vn0}
\|v_n(0)\|_2^2 \leq \frac{1}{T}+\frac{2}{\lambda_n^2}\left\lbrace \|\partial_x u_n\|_{L^2(0,T;L^2(\R))}^2 +\int_0^T\int_{\R}b(x)|u_n(x,t)|^2dxdt \right\rbrace
\end{equation}
Combining (\ref{e158}), (\ref{vn0}) and (\ref{e152}) we conclude that $\{v_n\}$ is bounded in $L^{\infty}(0,T;L^2(\R))\cap L^2(0,T;H^1(\R))$.
Hence, extracting a subsequence if needed, we have
\begin{equation*}
u_n \rightharpoonup u \quad \text{in $L^{\infty}([0,T];L^2(\R))$ weak $*$},
\end{equation*}
\begin{equation*}
u_n \rightharpoonup u \quad \text{in $L^{2}([0,T];H^1(\R))$ weak},
\end{equation*}
as $n \rightarrow \infty$. In order to analyze the nonlinear term, we consider the function
\begin{equation*}
A(v):=\int_0^va(\lambda u)du, \qquad A_n(v):=\int_0^va(\lambda_nu)du.
\end{equation*}
Proceeding as in the proof of Theorem \ref{teo4}, it is easy to see that $a(\lambda_n v_n)\partial_x v_n=\partial_x[A_n(v_n)]$ is bounded in $L^{\alpha}([0,T];H^{-2}_{loc}(\R))$, for  $\alpha \in \left(1, \frac{6}{p+1}\right)$, and  $\partial_t v_n=- \partial_x^3v_n+\partial_x^2v_n-a(\lambda_nv_n)\partial_xv_n-bv_n$ is bounded in $L^{\alpha}([0,T];H^{-2}_{loc}(\R)) \hookrightarrow L^{1}(0,T;H^{-2}_{loc}(\R))$. Since $\{v_n\}$ is bounded in $L^{2}([0,T];H^1(\R))$, we can obtain a subsequence, such that
\begin{equation}\label{strong}
\text{$v_n \rightarrow v$ strong in $L^{2}(\R \times (0,T)) \equiv L^2(0,T;L^2(\R))$}
\end{equation}
and
\begin{equation*}\label{e161}
\text{$a(\lambda_nv_n)\partial_xv_n \longrightarrow a(\lambda v)\partial_x v$ in $D'(\R \times [0,T])$},
\end{equation*}
where $v$ solves the equation
\begin{equation*}
v_t+v_{xxx}-v_{xx}+a(\lambda v)v_x+bv=0 \quad \text{in $D'([0,T]\times \R)$}.
\end{equation*}
On the other hand,  by (\ref{e158}) and (\ref{strong}), we obtain
\begin{equation}\label{strong1}
\|v\|_{L^2(0,T;L^2(\R))}=1\quad  \text{and}\quad \lim_{n\rightarrow \infty }\int_0^T\int_{\R}b(x)|v_n(x,t)|^2dxdt=0.
\end{equation}
Consequently, due to (\ref{hyp a}) it follows that
\begin{equation*}
\text{$v \equiv 0$ on $[0,T]\times\omega$}.
\end{equation*}
\begin{claim}\label{claim2}
Let $0 < t_1 < t_2 < T$. Then, there exists $(t'_1,t'_2) \subset (t_1, t_2)$, such that $v \in L^{\infty}(t'_1,t'_2; H^1(\R))$
\end{claim}
\begin{proof}
Let $w_n$ be solution of
\begin{equation*}
\left\lbrace
\begin{tabular}{l l}
$\partial_t w_n-\partial_x^2 w_n +\partial_x^3 w_n + a_n(\lambda_n w_n)\partial_x w_n = 0$ & in $\R\times (0,T)$ \\
$w_n(x,0)=v_n(x,0)$  & in $\R$
\end{tabular}\right.
\end{equation*}
where $a_n \in C_0^{\infty}(\R)$ satisfies (\ref{e41}) and (\ref{e42}). Proceeding as in the proof of the Theorem \ref{teo4}, we have that
\begin{equation}\label{e162}
\text{ $w_n-v_n \rightarrow 0$ in $C([0,T];H^{-1}_{loc}(\R))$} \quad  \text{and} \quad  \|w_n\|_{L^2(0,T;H^1(\R))}\leq C.
\end{equation}
Consider $\tau_n \in \left( t_1, \frac{t_1+t_2}{2}\right)$, such that
\begin{align*}
& \text{$\tau_n \rightarrow \tau$ and $\|w_n(\tau_n)\|_{L^2(0,T;H^1(\R))} \leq C$.}
\end{align*}
Hence, by Theorem \ref{teo5},
\begin{equation}\label{e163}
 \|w_n(\tau_n+\cdot)\|_{L^2(0,\varepsilon;H^1(\R))} \leq C,
\end{equation}
for any $\varepsilon \leq T$. On the other hand, note that (\ref{e162}) implies that
\begin{equation}\label{e164}
 w_n(\tau_n +\cdot) \rightarrow v(\tau + \cdot) \quad \text{in $C([0,\varepsilon];H^{-1}_{loc}(\R))$}
\end{equation}
for $\varepsilon < \frac{t_2-t_1}{2}$. Thus by (\ref{e163}) and (\ref{e164}), $v \in L^{\infty}(\tau,\tau+\varepsilon; H^1(\R))$.
\end{proof}
Applying the claim above and Lemma \ref{ucp}, we deduce that $v = 0$ in $\R \times (t'_1,t'_2)$, where $(t'_1,t'_2)\subset (t_1,t_2)$.
As $t_2$ can be arbitrary close to $t_1$, we obtain by continuity of $v$ in $H^{-1}_{loc}(\R)$ that $v \equiv 0$, which contradicts (\ref{strong1}).
\end{proof}
By claim \ref{claim1} and (\ref{e152}), the following estimate holds
\begin{equation*}
\|u(T)\|_{L^2(\R)}^2 \leq \gamma \|u_0\|_{L^2(\R)}^2, \quad \text{with $0< \gamma <1$}.
\end{equation*}
Consequently,
\begin{equation*}
\|u(kT)\|_{L^2(\R)}^2 \leq \gamma^k \|u_0\|_{L^2(\R)}^2, \quad \forall k\geq 0.
\end{equation*}
Moreover, for any $t \geq 0$, there exist $k>0$, such that $kT \leq t < (k+1)T$. Thus,
\begin{align*}
\|u(t)\| _{L^2(\R)}^2 &\leq \|u(kT)\|_{L^2(\R)}^2 \leq \gamma^k \|u_0\|_{L^2(\R)}^2 \\
& \leq \gamma^{\frac{t}{T}}\gamma^{-1} \|u_0\|_{L^2(\R)}^2 \\
&\leq \gamma^{-1}\|u_0\|_{L^2(\R)}^2e^{-\eta t},
\end{align*}
where $\eta = -\frac{ln \gamma}{T} > 0$.
\end{proof}

The next result asserts that the system (\ref{e1}) is globally uniformly exponentially stable in $L^2(\R)$. It means that the constant $\eta$  in Proposition \ref{explocal} is independent of $r$, when $\|u_0\|_{L^2(\R)} \leq r$.

\begin{thm}\label{globalexp}
Let $a$ be a $C^2(\R)$ function satisfying (\ref{e2}), with $ 1 \leq p < 5$, and $b$ satisfying (\ref{hyp a}). Then, the system  (\ref{e1}) is globally uniformly exponentially stable in $L^2(\R)$, i.e,  there exist a positive constant $\eta$ and a nonegative continuous function $\alpha: \R \rightarrow \R$, such that, for any $u_0 \in L^2(\R)$ with $\|u_0\|_{L^2(\R)} < r$ and any weak solution $u$ of (\ref{e1}),
\begin{equation*}\label{e165}
\|u(t)\|_{L^2(\R)} \leq \alpha(\|u_0\|_{L^2(\R)})e^{-\eta t}, \quad t\geq \varepsilon,
\end{equation*}
for all $\varepsilon >0$, where $\eta=\eta(\varepsilon)$.
\end{thm}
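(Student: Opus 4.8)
The plan is to upgrade the local statement of Theorem \ref{explocal}, whose decay rate $\eta(r)$ degenerates as $r\to\infty$, into a rate that no longer depends on the size of the data, by a \emph{restart} argument anchored at a fixed reference ball. The two ingredients I would use are: (a) the monotonicity of $t\mapsto\|u(t)\|_2$, which follows from the energy identity (\ref{e152}) together with the nonnegativity of $b$ in (\ref{hyp a}) (all three terms on the left of (\ref{e152}) are nonnegative, whence $\|u(t_2)\|_2\le\|u(t_1)\|_2$ for $t_1\le t_2$); and (b) the time-translation invariance of the class of weak solutions, i.e.\ that $u(\cdot+s)$ is again a weak solution, in the sense of Definition \ref{def2}, with datum $u(s)$, so that Theorem \ref{explocal} may be re-applied at any later instant.

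First I would freeze a reference radius $r_0>0$ once and for all, and let $C_0>0$ and $\eta_0:=\eta(r_0)>0$ be the constants produced by Theorem \ref{explocal} for that $r_0$. These are now fixed numbers, independent of the data considered later, and every weak solution $v$ with $\|v(0)\|_2\le r_0$ obeys $\|v(t)\|_2\le C_0\|v(0)\|_2 e^{-\eta_0 t}$. Given an arbitrary $u_0$ with $\|u_0\|_2<r$, I would apply Theorem \ref{explocal} with radius $r$ to $u$ itself; since $\|u(t)\|_2\le C(r)\|u_0\|_2 e^{-\eta(r)t}\to 0$, there is a finite first entry time $\tau=\tau(\|u_0\|_2)\ge 0$ with $\|u(\tau)\|_2\le r_0$, and $e^{\eta_0\tau}$ is bounded above by an explicit expression in $\|u_0\|_2$. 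Restarting at $t=\tau$ and invoking the fixed-ball estimate yields, for $t\ge\tau$,
\begin{equation*}
\|u(t)\|_2\le C_0\|u(\tau)\|_2\, e^{-\eta_0(t-\tau)}\le C_0 r_0\, e^{\eta_0\tau}\, e^{-\eta_0 t},
\end{equation*}
while for $0\le t\le\tau$ monotonicity gives the cruder bound $\|u(t)\|_2\le\|u_0\|_2\le\|u_0\|_2\, e^{\eta_0\tau}e^{-\eta_0 t}$. Choosing
\begin{equation*}
\alpha(\rho):=e^{\eta_0\,\tau(\rho)}\max\{C_0 r_0,\ \rho\},
\end{equation*}
the two ranges combine into $\|u(t)\|_2\le\alpha(\|u_0\|_2)e^{-\eta_0 t}$ for all $t\ge 0$; in particular the asserted estimate holds on $[\varepsilon,\infty)$, with $\eta=\eta_0$ independent of $r$ (and, in fact, of $\varepsilon$). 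To meet the continuity requirement on $\alpha$ I would replace it by a continuous nondecreasing majorant, which is harmless.

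The hard part will be the rigorous justification of ingredient (b) for $2\le p<5$, where weak solutions need not be unique: one must verify that $u(\cdot+\tau)$ genuinely qualifies as a weak solution under Definition \ref{def2}, i.e.\ that it arises as a limit of the regularized problems (\ref{e43}). I expect this to follow by translating in time the approximating sequence $\{u_n\}$ attached to $u$ and passing to the limit, using $u\in C_w([0,T];L^2(\R))$ to identify the new datum $u(\tau)$ as the weak-$*$ limit of $u_n(\tau)$; some care is needed because the entry time $\tau$ depends on the solution, so one should fix $\tau$ through the limit solution and then translate the $u_n$ accordingly. The remaining points are routine: $\eta_0$ is manifestly $r$-independent once $r_0$ is fixed, and the monotone dependence of the constants $C(r),\eta(r)$ in Theorem \ref{explocal} lets one take $\tau(\cdot)$, hence $\alpha(\cdot)$, continuous and nondecreasing.
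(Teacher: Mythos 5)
Your proposal is correct in substance, and it in fact proves more than the statement asks: a single rate $\eta_0=\eta(r_0)$, independent of both $r$ and $\varepsilon$, valid for all $t\geq 0$. It is, however, a genuinely different argument from the paper's. The paper applies Theorem \ref{explocal} twice and chains the results: once to the solution restarted at $t=\varepsilon$, writing the resulting constants as $C=C(\varepsilon)$, $\eta'=\eta'(\varepsilon)$, and once on the ball of radius $r$, giving $C_r,\eta_r$; the conclusion is $\|u(t)\|_2\leq C\,C_r e^{-\eta_r\varepsilon}\|u_0\|_2\,e^{-\eta' t}$ for $t\geq\varepsilon$, with final rate $\eta'(\varepsilon)$. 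You replace the first step by a restart at a \emph{fixed} reference ball: the radius-$r$ decay is used only to produce a finite entry time $\tau(\|u_0\|_2)$ into $\{\|v\|_2\leq r_0\}$, after which the frozen constants $C_0,\eta_0$ take over, while monotonicity of $\|u(t)\|_2$ covers $[0,\tau]$. This buys something real: in the paper's first step the restarted datum $u(\varepsilon)$ is controlled only through $\|u(\varepsilon)\|_2\leq\|u_0\|_2<r$, so a direct application of Theorem \ref{explocal} at time $\varepsilon$ yields constants depending on $r$, not on $\varepsilon$, and the paper provides no data-independent bound on $\|u(\varepsilon)\|_2$ that would justify the notation $C(\varepsilon),\eta'(\varepsilon)$. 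Your entry-time mechanism is precisely what decouples the rate from $r$ (and, as a bonus, from $\varepsilon$); it is the standard, and here the rigorous, way to pass from local to global uniform exponential stability. What the paper's route buys is only brevity.

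Two smaller remarks. First, the point you flag as the hard part --- that $u(\cdot+\tau)$ must again be a weak solution in the sense of Definition \ref{def2} with datum $u(\tau)$ --- is indeed delicate for $2\leq p<5$: the translated approximants $u_n(\cdot+\tau)$ emanate from $u_n(\tau)$, which converges to $u(\tau)$ only weakly in $L^2(\R)$ (strongly in $H^{-1}_{loc}(\R)$), whereas Definition \ref{def2} asks for approximants issued from the datum of the limit itself, and non-uniqueness prevents the regularized flow started at $u(\tau)$ from automatically reproducing $u(\cdot+\tau)$. But the paper is in exactly the same position: its proof of Theorem \ref{explocal} already iterates the one-step estimate to get $\|u(kT)\|_2^2\leq\gamma^k\|u_0\|_2^2$, and its proof of the present theorem restarts at $t=\varepsilon$; both use this translation property silently, so your argument is at the same level of rigor and more candid about it. Second, take $\tau$ to be the deterministic time $\eta(r)^{-1}\ln^{+}\bigl(C(r)\|u_0\|_2/r_0\bigr)$ rather than the first entry time of the particular solution; this makes $\tau$, hence $\alpha$, a function of $\|u_0\|_2$ alone, uniform over all weak solutions with the same datum, which is what the definition of $\alpha$ requires.
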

\begin{proof}
By Proposition \ref{explocal}, there exist $\eta '= \eta'(\varepsilon)>0$ and $C=C(\varepsilon)>0,$ such that
\begin{equation*}
\|u(t)\|_{L^2(\R)} \leq C\|u(\varepsilon)\|_{L^2(\R)}e^{-\eta' t}, \quad t\geq \varepsilon.
\end{equation*}
If $\|u_0\|_{L^2(\R)}\leq r$, again, by Proposition \ref{explocal} there exist $C_r >0$ and $\eta_r >0$, satisfying
\begin{equation*}
\|u(t)\|_{L^2(\R)} \leq C_r\|u_0\|_{L^2(\R)}e^{-\eta_r t}, \quad t\geq 0.
\end{equation*}
Thus, for all $t\geq \varepsilon$, we have
\begin{align*}
\|u(t)\|_{L^2(\R)} &\leq C\|u(\varepsilon)\|_{L^2(\R)}e^{-\eta' t} \\
& \leq CC_r\|u_0\|_{L^2(\R)}e^{-\eta_r \varepsilon}e^{-\eta' t} \\
& \leq \alpha(\|u_0\|_{L^2(\R)})e^{-\eta' t},
\end{align*}
where $\alpha(s)=CC_re^{-\eta_r \varepsilon} s$.
\end{proof}

\section{Appendix}

\begin{proof}[Proof Lemma \ref{carl}]
We follow the steps of \cite{rosier2000exact}. Let $q=q(x,t)$ satisfying (\ref{e131}) and (\ref{e132}) and  $\varphi(t,x)=\frac{\psi(x)}{t(T-t)}$,  where $\psi$ is a positive function (to be specified later). Consider $u := e^{-s\varphi}q$ and $w := e^{-s\varphi}P(e^{s\varphi} u)$, where $P$ is a differential operator given by
\begin{equation*}
P=\partial_t-\partial^2_x +\partial_x^3.
\end{equation*}
Note that
\begin{align*}
&\partial_t(e^{s\varphi}u)=e^{s\varphi}\left\lbrace s\varphi_tu+u_t\right\rbrace \\
&\partial_x(e^{s\varphi}u)=e^{s\varphi}\left\lbrace s\varphi_xu+u_x\right\rbrace \\
&\partial_x^2(e^{s\varphi}u)=e^{s\varphi}\left\lbrace s\varphi_{xx}u+s^2\varphi_{x}^2u+2s\varphi_{x}u_{x}+u_{xx}\right\rbrace \\
&\partial_x^3(e^{s\varphi}u)=e^{s\varphi}\left\lbrace s\varphi_{xxx}u+3s^2\varphi_{x}\varphi_{xx}u+3s\varphi_{xx}u_{x}+s ^3\varphi_{x}^3u+3s^2\varphi_{x}^2u_{x}+3s\varphi_{x}u_{xx}+u_{xxx}\right\rbrace.
\end{align*}
Hence,
\begin{align*}
P(e^{s\varphi}u)=e^{s\varphi}& \left\lbrace \left( s\varphi_t+s\varphi_{xxx}+3s^2\varphi_{x}\varphi_{xx}+s^3\varphi_{x}^3-s\varphi_{xx}-s^2\varphi_{x}^2\right)u+\left(3s\varphi_{xx}+3s^2\varphi_{x}^2-2s\varphi_{x}\right) u_x \right. \\
&\left.+\left(3s\varphi_{x}-1\right)u_{xx}+u_{xxx}+u_t\right\rbrace
\end{align*}
and
\begin{equation}\label{e133}
w=Au+Bu_{x}+Cu_{xx}+u_{xxx}+u_{t},
\end{equation}
where
\begin{align*}
A &= s(\varphi_t-\varphi_{xx}+\varphi_{xxx})+3s^2\varphi_{x}\varphi_{xx}+s^3\varphi_{x}^3-s^2\varphi_{x}^2  \\
B &= 3s\varphi_{xx}+3s^2\varphi_{x}^2-2s\varphi_{x} \\
C &= 3s\varphi_{x}-1.
\end{align*}
Set $L_1u:=u_t+u_{xxx}+Bu_{x}$ and $L_2 u:=Au+Cu_{xx}$. Thus, we have
\begin{equation}\label{e4.6}
2\int_0^T\int_{-L}^{L}L_1(u)L_2(u)dxdt\leq \int_0^T\int_{-L}^{L}\left( L_1(u)+L_2(u)\right)^2 dxdt\ = \int_0^T\int_{-L}^{L}w^2dxdt
\end{equation}
Next, we compute the double product in (\ref{e4.6}). Let us denote by $(L_iu)_j$ the $j$-th term of $L_iu$ and $Q=(0,T)\times(-L,L)$. Then, to compute the integrals on the right hand side of (\ref{e4.6}), we perform integrations by part in $x$ or $t$:
\begin{align*}
\left( (L_1u)_1,(L_2u)_1\right)_{L^2(Q)}&=-\frac{1}{2} \int_Q A_tu ^2dxdt \\
\left( (L_1u)_2,(L_2u)_1\right)_{L^2(Q)}&=-\frac{1}{2} \int_Q A_{xxx}u^2dxdt + \frac{3}{2} \int_Q A_x u_x^2dxdt \\
\left( (L_1u)_3,(L_2u)_1\right)_{L^2(Q)}&- \frac{1}{2} \int_Q (AB)_{x}u^2dxdt \\
\left( (L_1u)_2,(L_2u)_2\right)_{L^2(Q)}&= -\frac{1}{2} \int_Q C_{x}u^2dxdt \\
\left( (L_1u)_3,(L_2u)_2\right)_{L^2(Q)}&= -\frac{1}{2} \int_Q (BC)_{x}u^2_xdxdt
\end{align*}
By (\ref{e133}) and Young inequality, it follows that
\begin{align*}
\left( (L_1u)_1,(L_2u)_2\right)_{L^2(Q)}&=\frac{1}{2}\int_Q C_tu^2_xdxdt +\frac{1}{2}\int_Q AC_x \partial_x(u^2)dxdt + \int_Q BCu_x^2dxdt+\frac{1}{2}\int_Q CC_x \partial_x(u^2_x)dxdt\\
&+\int_Q C_xu_xu_{xxx}dxdt-\int_Q C_x u_x wdxdt \\
&\geq \frac{1}{2}\int_Q \left\lbrace C_t+2BC_x -(CC_x)_x+C_{xxx}\right\rbrace u^2_xdxdt-\frac{1}{2}\int_Q (AC_x)_x u^2dxdt - \int_Q C_xu_{xx}^2dxdt \\
&-\varepsilon\int_Q C_x^2 u_x^2 dxdt-C(\varepsilon)\int_Q w^2 dxdt,
\end{align*}
where $\varepsilon$ is any number in $(0,1)$. Putting together the computations above, we obtain
\begin{equation}\label{e137}
\int_Q \left\lbrace D u^2 +Eu_x^2+ F u_{xx}^2\right\rbrace dxdt = 2 \left( (L_1u,L_2u\right)_{L^2(Q)} \leq C(\varepsilon) \int_Q w^2dxdt
\end{equation}
with, $D$, $E$ and $F$ given by
\begin{align}
&D = - \left( A_t+A_{xxx}+(AB)_x+(C_xA)_x\right) \notag \\
&E= 3A_x +BC_x-B_xC- (CC_x)_x+C_xxx +C_t -\varepsilon C_x^2 \label{eE}\\
&F=-3C_x.\notag
\end{align}
The identities above allow us to conclude that
\begin{align*}
D &= -15 s^5\varphi_{x}^4\varphi_{xx}+\frac{O(s^4)}{t^4(T-t)^4}, \qquad \text{(as $s\rightarrow \infty$)}, \\
& = -15 \frac{s^5}{t^5(T-t)^5}\psi_{x}^4(x)\psi_{xx}(x)+\frac{O(s^4)}{t^4(T-t)^4}.
\end{align*}
If we consider
\begin{equation}\label{c1}
\text{$|\psi_x(x)|>0$ and $\psi_{xx}(x) < 0$, for all $x \in [-L,L]$},
\end{equation}
taking $s$ large enough, we obtain a constant $C_1>0$, such that
\begin{equation}\label{cD}
D \geq C_1 \frac{s^5}{t^5(T-t)^5}.
\end{equation}
On the other hand, note that
\begin{align*}
BC_x = & 9s^2\varphi_{xx}^2+9s^3\varphi_{x}^2\varphi_{xx}-6s^2\varphi_{x}\varphi_{xx} \\
B_xC= & 9s^2\varphi_{x}\varphi_{xxx}+18s^3\varphi_{x}^2\varphi_{xx}-12s^2\varphi_{x}\varphi_{xx}-3s\varphi_{xxx}+2s\varphi_{xx} \\
3A_x = &3s\varphi_{xt}+3s\varphi_{4x}-3s\varphi_{xxx}+9s^2\varphi_{xx}^2+9s^2\varphi_{x}\varphi_{xxx}+9s^3\varphi_{x}^2\varphi_{xx}-6s^2\varphi_{x}\varphi_{xx}\\
CC_x = &9s^2\varphi_{x}\varphi_{xx}-3s\varphi_{xx} \\
(CC_x)_x = &9s^2\varphi_{xx}^2+9s^2\varphi_{x}\varphi_{xxx}-3s\varphi_{xxx} \\
C_{xxx}+C_t-\varepsilon C_x^2 = &3s\varphi_{4x}+3s\varphi_{xt}-9\varepsilon s^2\varphi_{xx}^2
\end{align*}
Putting together these expressions, we have
\begin{align*}
E = 6s\varphi_{xt}+6s\varphi_{4x}+9(1-\varepsilon)s^2\varphi_{xx}^2-9s^2\varphi_{x}\varphi_{xxx}+3s\varphi_{xxx}-2s\varphi_{xx},
\end{align*}
for $E$ defined in (\ref{eE}). Hence,
\begin{align*}
E = & 9s^2 \left\lbrace (1-\varepsilon)\varphi_{xx}^2-\varphi_x\varphi_{xxx}\right\rbrace +\frac{O(s)}{t^2(T-t)^2}, \qquad \text{as $s \rightarrow \infty$,} \\
= &9\frac{s^2}{t^2(T-t)^2} \left\lbrace (1-\varepsilon)\psi_{xx}^2(x)-\psi_x(x)\psi_{xxx}(x)\right\rbrace +\frac{O(s)}{t^2(T-t)^2}.
\end{align*}
For $s$ large enough and $\psi$ satisfying
\begin{equation}\label{c2}
\psi_x(x)\psi_{xxx}(x) < (1-\varepsilon)\psi_{xx}^2(x), \quad \text{for all $x\in [-L,L]$},
\end{equation}
we obtain a constant $C_2 >0$, such that
\begin{equation}\label{cE}
E \geq C_2\frac{s^2}{t^2(T-t)^2}.
\end{equation}
Finally, since
\begin{align*}
F= -9s\varphi_{xx} = - \frac{9s\psi_{xx}(x)}{t(T-t)},
\end{align*}
(\ref{c1}) guarantees the existence of a constant $C_3 >0$, such that
\begin{equation}\label{cF}
F\geq C_3 \frac{s}{t(T-t)}.
\end{equation}
Combining (\ref{cD}), (\ref{cE}), (\ref{cF}) and (\ref{e137}), we obtain
\begin{equation*}
\int_Q \left\lbrace \frac{s^5}{t^5(T-t)^5}|u|^2+\frac{s^2}{t^2(T-t)^2}|u_x|^2+\frac{s}{t(T-t)}|u_{xx}|^2\right\rbrace dxdt \leq C_4 \int_ Q w^2dxdt,
\end{equation*}
for some $C_4 >0$. On the other hand, note that
\begin{align*}
\int_Q \frac{s^3}{t^3(T-t)^3}u_x^2dxdt &= -\int_Q\frac{s^3}{t^3(T-t)^3}uu_{xx}dxdt \leq \int_Q \frac{s^5}{2t^5(T-t)^5}|u|^2dxdt +\int_Q\frac{s}{2t(T-t)}|u_{xx}|^2dxdt \\
&\leq \frac{C_4}{2}\int_Q w^2 dxdt.
\end{align*}
Then,
\begin{equation*}
\int_Q \left\lbrace \frac{s^5}{t^5(T-t)^5}|u|^2+\frac{s^3}{t^3(T-t)^3}|u_x|^2+\frac{s}{t(T-t)}|u_{xx}|^2\right\rbrace dxdt \leq C_5 \int_ Q w^2dxdt
\end{equation*}
where $C_5 >0$, provided that (\ref{c1}) and (\ref{c2}) hold. Returning to the original variable $u=e^{-s\varphi}q$, we conclude the proof of Lemma \ref{carl}.
\end{proof}


\begin{thebibliography}{10}

\bibitem{amick1989decay}
C.~Amick, J.~Bona and M.~E. Schonbek,
\newblock Decay of solutions of some nonlinear wave equations,
\newblock {\em J. Differential Equations} 81 (1989), 1--49.

\bibitem{biler1984asymptotic}
P.~Biler,
\newblock Asymptotic behaviour in time of solutions to some equations generalizing the Korteweg--de Vries--Burges equation,
\newblock {\em Bull. Pol. Acad. Sci. Math.} 32 (1984), 275--282.

\bibitem{bona2001asymptotic}
J.~Bona and L.~Luo,
\newblock Asymptotic decomposition of nonlinear, dispersive wave equations with dissipation,
\newblock {\em  Phys. D} 152 (2001), 363--383.

\bibitem{bona1995more}
J.~Bona and L.~Luo,
\newblock More results on the decay of solutions to nonlinear dispersive wave equations,
\newblock {\em Discrete Contin. Dyn. Syst.} 1 (1995), 151--193.

\bibitem{bona1993decay}
J.~L. Bona and L.~Luo,
\newblock Decay of solutions to nonlinear, dispersive wave equations,
\newblock {\em Diff. Int. Eq.} 6 (1993), 961--980.

\bibitem{bona1976solutions}
J.~L. Bona and R.~Scott,
\newblock Solutions of the korteweg-de vries equation in fractional order sobolev spaces,
\newblock {\em Duke Math. J,} 43 (1976), 87--99.

\bibitem{cavalcanti2014global}
M.~Cavalcanti, V.~D. Cavalcanti, V.~Komornik, and J.~Rodrigues,
\newblock Global well-posedness and exponential decay rates for a KdV--Burgers equation with indefinite damping,
\newblock {\em Ann. Inst. H. Poincaré Anal. Non Linéaire}, 31 (2014), 1079--1100.

\bibitem{cavalcanti2012decay}
M.~Cavalcanti, V.~D. Cavalcanti, A.~Faminskii and F.~Natali,
\newblock Decay of solutions to damped Korteweg--de Vries type equation,
\newblock{\em Appl. Math. Optim.} 65 (2012), 221--251.

\bibitem{dix1992dissipation}
D.~B. Dix,
\newblock The dissipation of nonlinear dispersive waves: the case of asymptotically weak nonlinearity,
\newblock {\em Comm. Partial Differential Equations} 17 (1992), 1665--1693.

\bibitem{dlotko2011generalized}
T.~Dlotko.
\newblock The generalized Korteweg--de Vries--Burgers equation in $H^2(\R)$.
\newblock {\em Nonlinear Anal.} 74 (2011), 721--732.

\bibitem{dlotko2010asymptotic}
T.~Dlotko and C.~Sun
\newblock Asymptotic behavior of the generalized Korteweg--de Vries--Burgers equation,
\newblock {\em  J. Evol. Equ.} 10 (2010), 571--595.

\bibitem{kenig1996bilinear}
C.~E. Kenig, G.~Ponce, and L.~Vega,
\newblock A bilinear estimate with applications to the KdV equation,
\newblock {\em J. Amer. Math. Soc.} (1996), 573--603.

\bibitem{linares2007exponential}
F.~Linares and A.~Pazoto,
\newblock On the exponential decay of the critical generalized Korteweg--de Vries equation with localized damping,
\newblock {\em Proc. Amer. Math. Soc.}  135 (2007), 1515--1522.

\bibitem{linares2009asymptotic}
F.~Linares and A.~Pazoto,
\newblock Asymptotic behavior of the Korteweg--de Vries equation posed in a quarter plane,
\newblock {\em J. Differential Equations} 246 (2009), 1342--1353.

\bibitem{lions1972non}
J.~L. Lions, E.~Magenes, P.~Kenneth, and E.~Magenes,
\newblock {\em Non-homogeneous boundary value problems and applications}, Vol 1,
\newblock Springer Berlin, (1972).

\bibitem{pazy1983semigroups}
A.~Pazy,
\newblock {\em Semigroups of linear operators and applications to partial differential equations}, Vol 44,
\newblock Springer New York, (1983).

\bibitem{rosier2000exact}
L.~Rosier,
\newblock Exact boundary controllability for the linear Korteweg--de Vries  equation on the half-line,
\newblock {\em SIAM J. Control Optim.} 39 (2000), 331--351.

\bibitem{rosier2009control}
L.~Rosier. and B.-Y. Zhang,
\newblock Control and stabilization of the Korteweg-de Vries equation: recent progresses,
\newblock {J. Syst. Sci. Complex.}, 22 (2009), 647--682.

\bibitem{rosier2006global}
L.~Rosier and B.-Y. Zhang,
\newblock Global stabilization of the generalized Korteweg--de Vries equation posed on a finite domain,
\newblock {\em SIAM J. Control Optim.} 45 (2006), 927--956.

\bibitem{kang2014simple}
B.~Said-Houari,
\newblock Long-time behavior of solutions of the generalized Korteweg-de Vries equation,
\newblock {\em Discrete Contin. Dyn. Syst. Ser. B}, To appear.

\bibitem{simon1986compact}
J.~Simon,
\newblock Compact sets in the space $L^p(0, T; B)$,
\newblock {\em Ann. Mat. Pura Appl.} 146 (1986), 65--96.

\bibitem{su1969korteweg}
C.~Su and C.~Gardner,
\newblock Korteweg--de Vries equation and generalizations III. Derivation of the Korteweg-de Vries equation and Burgers equation,
\newblock {\em J. Math. Phys.} 10 (1969), 536--539.

\bibitem{tartar1972interpolation}
L.~Tartar,
\newblock Interpolation non lin{\'e}aire et r{\'e}gularit{\'e},
\newblock {\em J. Funct. Anal.} 9 (1972), 469--489.

\bibitem{temam1984theory}
R.~Temam,
\newblock {\em Navier-Stokes equations: Theory and Numerical Analysis.}
\newblock North-Holland, Amsterdam, (1984).

\end{thebibliography}
\end{document}